\pgfplotsset{compat=1.8}
\newcommand{\frakg}{{\mathfrak{g}}}
\newcommand{\frakk}{{\mathfrak{k}}}
\newcommand{\frakl}{{\mathfrak{l}}}
\newcommand{\frakm}{{\mathfrak{m}}}
\newcommand{\frakq}{{\mathfrak{q}}}
\newcommand{\fraku}{{\mathfrak{u}}}
\newcommand{\res}{{\mathrm{Res}}}
\newcommand{\Sym}{{\mathrm{Sym}}}
\newcommand{\rA}{{\mathrm{A}}}
\newcommand{\rB}{{\mathrm{B}}}
\newcommand{\rC}{{\mathrm{C}}}
\newcommand{\rD}{{\mathrm{D}}}
\newcommand{\rE}{{\mathrm{E}}}
\newcommand{\rF}{{\mathrm{F}}}
\newcommand{\rG}{{\mathrm{G}}}
\newcommand{\rU}{{\mathrm{U}}}
\newcommand{\tU}{\widetilde{\mathrm{U}}}
\newcommand{\rSp}{{\mathrm{Sp}}}
\newcommand{\SU}{{\mathrm{SU}}}
\newcommand{\SL}{{\mathrm{SL}}}
\newcommand{\Spin}{{\mathrm{Spin}}}
\newcommand{\Vmin}{{\mathrm{V}}_{\mathrm{min}}}
\newcommand{\calF}{{\mathcal{F}}}
\newcommand{\calJ}{{\mathcal{J}}}
\newcommand{\calK}{{\mathcal{K}}}
\newcommand{\calO}{{\mathcal{O}}}
\newcommand{\calU}{{\mathcal{U}}}
\newcommand{\calV}{{\mathcal{V}}}
\newcommand{\bfA}{{\mathbf{A}}}
\newcommand{\ZZ}{{\bbZ / 2 \bbZ}}
\newcommand{\spin}{\widetilde{Spin}(4,4)}
\newcommand{\spf}{\widetilde{Spin}(5,4)}
\newcommand{\PU}{\textnormal{PU}}
\newcommand{\talpha}{\tilde{\alpha}}
\newcommand{\Hom}{{\mathrm{Hom}}}
\newcommand{\bbC}{{\mathbb{C}}}
\newcommand{\bbP}{{\mathbb{P}}}
\newcommand{\bbZ}{{\mathbb{Z}}}
\newtheorem{lemma}{Lemma}[section]
\newtheorem{prop}[lemma]{Proposition}
\newtheorem{thm}[lemma]{Theorem}
\newtheorem{cor}[lemma]{Corollary}
\newtheorem{claim}{Claim}
\theoremstyle{remark}
\newtheorem{rmk}[lemma]{Remark}
\newcommand{\red}{\color{red}}
\newcommand{\SubS}{\subsection{}}
\dedicatory{
to Dick Gross, in memoriam} 
\title[Exceptional dual pair correspondences; real groups of split rank one]{Exceptional dual pair correspondences; case of real groups of split rank one}
\author{Petar Baki\'c} 
\address{Department of Mathematics, University of Utah, Salt Lake City, UT 84112}
 \email{bakic@utah.edu}
\author{Hung Yean Loke}
\address{Department of Mathematics, 
National University of Singapore,
Block S17, 10 Lower Kent Ridge Road, Singapore 119076.}
\email{matlhy@nus.edu.sg}
\author{Gordan Savin}
\address{Department of Mathematics, University of Utah, Salt Lake City, UT 84112}
 \email{gordan.savin@utah.edu}
\begin{document}

\subjclass{22E46, 22E47}
\keywords{minimal representation, theta correspondences} 

\footnote {This work is  supported by the Croatian Science Foundation under the project IP-2022-10-4615, a gift No. 946504 from the Simons Foundation and an NUS MOE grant A-8002493-00-00}

\begin{abstract}
Exceptional real groups have quaternionic forms of split rank 4 that contain dual pairs $G\times G'$, 
 where $G'$ is the split Lie group of the type $\rG_2$, and $G$ a Lie group of split rank one. 
  In this paper we restrict the minimal representation of the quaternionic  group 
 to the dual pair and prove some significant results for the resulting correspondence of representations. 
\end{abstract}

\maketitle

\section{Introduction}  

We start with a general situation.  Let $\mathfrak g$ and $\mathfrak g'$ be a pair of complex simple Lie algebras. Let $G$ and $G'$ be a pair of Lie groups with 
complexified Lie algebras $\mathfrak g$ and $\mathfrak g'$. Let $K$ and $K'$ be the maximal compact subgroups of $G$ and $G'$, respectively. In this paper, 
we shall work with $(\mathfrak g,K)$-modules, and study the following problem:

Let $(\omega, \Omega)$ be a $(\mathfrak g \times \mathfrak g', K\times K')$-module, where $\omega$ denotes the action on the vector space $\Omega$. Let 
$\pi$ be an irreducible $(\mathfrak g,K)$-module. Then there exists a $(\mathfrak g',K')$-module $\Theta(\pi)$, the big theta lift of $\pi$,  such that 
\[ 
\Omega /\bigcap_{T\in \Hom_{\mathfrak g}(\Omega,\pi)} \mathrm{ker} (T) \cong \pi \otimes \Theta(\pi). 
\] 
Conversely, starting with an irreducible $(\mathfrak g',K')$-module $\pi'$, we can define a $(\mathfrak g,K)$-module $\Theta(\pi')$.  In order to 
get a correspondence of irreducible modules, one would like to show that~$\Theta(\pi)$ has finite length and that it has a unique irreducible quotient, and 
the same for~$\Theta(\pi')$. 

 We state a general result in that direction. To do so, we need some additional notation.  Let $Z(\mathfrak g)$ and $Z(\mathfrak g')$ be the centers 
 of enveloping algebras.  If $\tau$ is a $K$-type (that is an irreducible finite-dimensional representation of $K$), let $\Theta(\tau)$ denote the lift of $\tau$, i.e.\ the $\mathfrak g'$-module such that the $\tau$-isotypic summand of $\Omega$ is $\Omega[\tau]=\tau \otimes \Theta(\tau)$.  Then we have (see Theorem \ref{T:general}): 
\begin{thm} \label{T:introduction} 
 Assume that the following two hold: 
\begin{itemize}  
\item There is a correspondence of infinitesimal characters, that is, $\omega(Z(\mathfrak g))= \omega(Z(\mathfrak g'))$.  
\item  For every $K$-type $\tau$, there exists a finite dimensional representation $F_{\tau}$ of $K'$ such that
$\Theta(\tau)$ is a quotient of $U(\mathfrak g')\otimes_{ U(\mathfrak k')} F_{\tau}$.  
\end{itemize} 
Let $\pi$ and $\pi'$  be irreducible $(\mathfrak g,K)$ and  $(\mathfrak g',K')$-modules, respectively. Then: 
\begin{itemize}  
\item $\Theta(\pi)$ and $\Theta(\pi')$ are finite length $(\mathfrak g',K')$ and  $(\mathfrak g,K)$-modules, respectively. 
\item If $\tau$ is a $K$-type, then 
\[ 
\dim_{K}\Hom (\Theta(\pi'), \tau) \leq \dim\Hom_{K'}(F_{\tau}, \pi'). 
\] 
\end{itemize} 
\end{thm} 
 In this paper we look at the case where $\Omega$ is the minimal representation of the quaternionic group $\rE_{n,4}$ constructed by Gross and Wallach \cite{GW1} and \cite{GW2}.  
The exceptional groups $\rF_{4,4}$, $\rE_{n,4}$, $n=6,7,8$,  contain  two families of dual pairs $G \times G'$ where $G'$ is the split exceptional group of type $\rG_2$, 
and 
\[ 
G =\mathrm{Aut}(J) 
\] 
where $J$ is a Freudenthal-Jordan algebra \cite{KMRT}. As a vector space, $J$ is the space of $3\times 3$ hermitian-symmetric matrices with coefficients in 
$\mathbb R$, $\mathbb C$, $\mathbb H$ and $\mathbb O$, where the latter is the algebra of Cayley octonions. The Jordan algebra structure depends on the 
choice of the identity $e$. If 
\[ 
e= 
\left(\begin{array}{ccc} 
1 & & \\
& 1 &  \\
& & 1
\end{array} 
\right)
\] 
then $G$ is compact. The simply connected cover of $G$ is, respectively, $\Spin(3)$, $\SU(3)$, $\rSp(3)$ and $\rF_4$.
 In this case the restriction of $\Omega$ is a direct sum of $\pi\otimes \Theta(\pi)$. Irreducibility and a complete 
description of $\Theta(\pi)$ was obtained in \cite{HPS}.
In this paper we consider the other family, for 
\[ 
e= 
\left(\begin{array}{ccc} 
1 & & \\
& -1 &  \\
& & -1
\end{array} 
\right) 
\] 
when $G$ is the split rank one form of the compact group in the first family. 
 As explained in~\cite{LiDuke},  the correspondence obtained in \cite{HPS} for the first family implies the first assumption of Theorem \ref{T:introduction} for the second family. The crux of this paper is the proof of the second assumption (existence of $F_\tau$) for groups in the second family, that is, the rank one $G$.  
 To that end, let $K$ be the maximal compact subgroup of $G$.  
 The centralizer of $K$ is a split simply connected group of
type $B_3$, thus we have the following see-saw.  
\begin{center}
	\begin{tikzpicture}[>=stealth, scale=0.85]
		\node (ThetaPi)  at (-1.5,0) {$\Theta(\pi)$};
		\node (ThetaTau) at (-1.5,2.5) {$\Theta(\tau)$};
		
		\draw[->] (-1.5,1.8) -- (-1.5,0.7);
		
		\node (G2) at (0,0) {$G_2$};
		\node (B3) at (0,2.5) {$B_3$};
		
		\node (K) at (3,0) {$K$};
		\node (tau) at (4,0) {$\tau$};
		
		\draw[<-] (4,1.8) -- (4,0.7);
		
		\node (G)  at (3,2.5) {$G$};
		\node (pi) at (4,2.5) {$\pi$};
		
		\draw (G2) -- (G);
		\draw (B3) -- (K);
	\end{tikzpicture}
\end{center}
In the above picture $\tau$ is a $K$-type of $\pi$. Observe that $\Theta(\tau)$ is naturally a $B_3$-module. It is of interest to us since, as the picture shows, 
$\Theta(\pi)$ is a quotient of $\Theta(\tau)$. 
 We prove that $\Theta(\tau)$ is in fact an irreducible quaternionic representation of $B_3$, with an explicit minimal type $F_{\tau}$.  
We also prove a general result showing that any quaternionic representation of $B_3$ is generated by its minimal type when restricted to $G_2$. 
Thus, if $\mathfrak g_2$ is the complexified Lie algebra of $G_2$, it follows at once that $\Theta(\tau)$ is a quotient of 
$U(\mathfrak g_2) \otimes_{U(\mathfrak k_2)} F_{\tau}$, as desired. 

Thus, Theorem \ref{T:introduction} holds for our dual pair. The first conclusion, finite length, is of obvious importance. 
The second goes a long way towards determining $\Theta(\pi)$ from $\pi'$, and works well for cohomological $\pi'$, 
since $K'$-types of such representations lie in explicit cones, see \cite{VoganZuckerman} and \cite{SR}.
Indeed, we obtain some very precise results for the dual pair $\mathrm{PU}(2,1)\times G_2$ in $E_{6,4}$, see Theorem \ref{G2toPU3regular}.  In particular, we prove Conjecture 4.5 in \cite{BHLS} 
(functoriality of the correspondence) with the assumption of unitarizability (see Theorem \ref{T:main_wall}), 
 and this is enough for applications in loc.\ cit.\  and \cite{BHLS_2} since local components of square integrable automorphic representations are unitary.  

The paper is organized as follows. In Section \ref{S:general} we revisit the definition of $\Theta(\pi)$ and prove Theorem  
\ref{T:introduction} using results proved in subsequent sections. In Section \ref{S:QG} we briefly review quaternionic groups. In Section \ref{S:QR} we introduce quaternionic representations and 
explicate the Lie algebra action on them. This section contains some key results, such as Theorem \ref{T:generation},  that is used in Section \ref{S:B3G2} where we prove that 
quaternionic representations of $B_3$, when restricted to $G_2$, are generated by the minimal type.  Sections \ref{S:D4} to \ref{S:F4} are used to compute the 
lift $\Theta(\tau)$ from $K$ to $B_3$. The method uses another see-saw, involving a split group of type $D_4$; however, computations have to be done on a case by case basis, since exceptional groups do not have structural uniformity as, for example, general linear or symplectic groups. In Section \ref{S:unitary} we review the representations of $\mathrm{PU}(2,1)$ and $G_2$ that play a role in Section \ref{S:correspondence}, where we compute the theta lifts from $G_2$ to $\PU(2,1)$ for cohomological representations. Finally, in Section \ref{S:branching}, we gather some branching rules and, using the $B_3$ correspondence in $E_8$ from Section \ref{S:E8},  derive a branching rule from $F_4$ to $B_4$, for a two-parameter family of finite dimensional representations of $F_4$, in the style of  \cite{HTW}.  

\noindent {\bf Acknowledgement.} Hung Yean Loke would like to thank the hospitality of University of Utah where part of this paper was written. Petar Baki\'{c} would like to thank Aleksander Horawa, Siyan Daniel Li-Huerta, and Naomi Sweeting for the many useful conversations about the $\mathrm{PU}(3)\times G_2$ correspondence.



\section{Main result on dual pair correspondences}  \label{S:general}

In this section we derive some general results on theta correspondences and prove the result announced in the introduction. To that end, we introduce a  more flexible definition of 
$\Theta(\pi)$ that does not require that $\pi$ is irreducible.  

\subsection{Two definitions of $\Theta(\pi)$} 
Let $(\omega, \Omega)$ be a $(\mathfrak g \times \mathfrak g', K\times K')$-module, where $\omega$ denotes the action on the vector space $\Omega$.  The usual definition of 
$\Theta(\pi)$ requires the following:  

\begin{prop}  \label{P:big_theta_def} 
	Let $\pi$ be an irreducible $(\mathfrak g,K)$-module. 
	Then there exists a $(\mathfrak g',K')$-module $\Theta(\pi)$, the big theta lift of $\pi$,  such that 
	\[ 
	\Omega /\bigcap_{\varphi \in \Hom_{\mathfrak g}(\Omega,\pi)} \mathrm{ker} (\varphi ) \cong \pi \otimes \Theta(\pi). 
	\] 
\end{prop} 
The proposition clearly follows from the following special case: 
\begin{lemma}  Assume that $\cap_{\varphi \in \Hom_{\mathfrak g}(\Omega,\pi)} \mathrm{ker} (\varphi )=0$. Then, we have a 
	natural isomorphism 
	\[ 
	\Omega\cong \Hom_{\mathfrak g}(\pi,\Omega)\otimes \pi. 
	\] 
\end{lemma} 
\begin{proof} The key is to show that every $v\in\Omega$ is contained in a finite sum of copies of $\pi$. Since $v$ is $K$-finite, there exists 
	a finite dimensional subpace $\pi(v)\subseteq \pi$ such that $\varphi(v)\in \pi(v)$ for all $\varphi\in  \Hom_{\mathfrak g}(\Omega,\pi)$.  
	Since $\pi(v)$ is finite-dimensional, there 
	exists $\varphi_1, \ldots ,\varphi_n$ such that every $\varphi(v)$ is a linear combination of $\varphi_1(v), \ldots ,\varphi_n(v)$ . Let 
	$V=U(\mathfrak g)\cdot v \subseteq  \Omega$. It follows that 
	\[ 
	w\mapsto (\varphi_1(w), \ldots ,\varphi_n(w)), \, w\in V
	\] 
	is an embedding of $V$ into $\pi^n$.  Since $\pi$ is irreducible, it is easy to see that $V\cong \pi^k$ for some $k\leq n$, and hence $v$ is in the image of the 
	natural map 
	\[ 
	\Hom_{\mathfrak g}(\pi,\Omega) \otimes \pi \rightarrow \Omega
	\] 
	$(f,v)\mapsto f(v)$. 
\end{proof}  
We shall now give another description of $\Theta(\pi)$. For any $(\mathfrak g,K)$-module $V$, let $V_{\mathfrak g}$ denote the maximal quotient of $V$ 
such that $\mathfrak g$ acts trivially on it.  
\begin{prop}  Let $\pi$ be an irreducible $(\mathfrak g,K)$-module. Then 
	\[ 
	\Theta(\pi)\cong (\Omega\otimes \pi^{\vee})_{\mathfrak g}, 
	\] 
	where $\pi^{\vee}$ denotes the contragredient of $\pi$. 
\end{prop}
\begin{proof} 
	Let $\Theta(\pi)^*$ be the linear dual of $\Theta(\pi)$. 
	Observe that we have a natural isomorphism 
	\[ 
	\Theta(\pi)^* \cong  \Hom_{\mathfrak g}(\Omega,\pi). 
	\] 
	Next, we have a surjection. 
	\[ 
	(\Omega\otimes \pi^{\vee})_{\mathfrak g} \rightarrow (\Theta(\pi)\otimes \pi\otimes\pi^{\vee})_{\mathfrak g} \cong \Theta(\pi). 
	\] 
	Take the linear duals of both sides, we get an injection in the opposite direction, 
	\[ 
	\Theta(\pi)^*    \cong  \Hom_{\mathfrak g}(\Omega,\pi)
	\rightarrow  [(\Omega\otimes \pi^{\vee})_{\mathfrak g}]^{\ast}. 
	\] 
	Now observe that $[(\Omega\otimes \pi^{\vee})_{\mathfrak g}]^{\ast}$ is the same as $\mathfrak g$-invariant bilinear forms on $\Omega \times \pi^{\vee}$. Hence 
	\[ 
	[(\Omega\otimes \pi^{\vee})_{\mathfrak g}]^{\ast} \cong \Hom_{\mathfrak g}(\Omega, (\pi^{\vee})^{\vee}). 
	\] 
	Since $\pi$ is irreducible (and therefore admissible), $ (\pi^{\vee})^{\vee}\cong\pi$. Hence the injection is a bijection. 
\end{proof} 
In view of the above result, we can define $\Theta(\pi)$ for any  $(\mathfrak g, K)$-module $\pi$ by 
\[ 
\Theta(\pi)=(\Omega\otimes \pi^{\vee})_{\mathfrak g}. 
\] 
This definition has several advantages. 
The module $\pi$ does not have to be irreducible. It is clear that $\Theta(\pi)$ is a $(\mathfrak g', K')$-module.  Most importantly, it is easier to work with.  

\subsection{Main result} If $\tau$ is a $K$-type,  we can define $\Theta(\tau)=(\Omega\otimes\tau^{\vee})_K$, the lift of $\tau$, in the same way. 
It is a $(\mathfrak g',K')$-module. 
The following was announced in the introduction. We now present the proof.  
\begin{thm} \label{T:general} 
	Assume that the following two hold: 
	\begin{itemize}  
		\item There is a correspondence of infinitesimal characters, that is, $\omega(Z(\mathfrak g))= \omega(Z(\mathfrak g'))$.  
		\item  For every $K$-type $\tau$, there exists a finite dimensional representation $F_{\tau}$ of $K'$ such that
		$\Theta(\tau)$ is a quotient of $U(\mathfrak g')\otimes_{ U(\mathfrak k')} F_{\tau}$.  
	\end{itemize} 
	Let $\pi$ and $\pi'$  be irreducible $(\mathfrak g,K)$ and  $(\mathfrak g',K')$-modules, respectively. 
	Then
	\begin{itemize}  
		\item  $\Theta(\pi)$ and $\Theta(\pi')$ are finite length $(\mathfrak g',K')$ and  $(\mathfrak g,K)$-modules, respectively. 
		\item If $\tau$ is a $K$-type, then 
		\[ 
		\dim \Hom_K(\Theta(\pi'), \tau) \leq \dim \Hom_{K'} (F_{\tau}, \pi'). 
		\] 
	\end{itemize}
\end{thm} 
\begin{proof}  Let $\tau$ be a $K$-type. Since $\pi\otimes \Theta(\pi)$ is a quotient of $\Omega$, we have a surjection 
	\[ 
	\Theta(\tau) = (\Omega\otimes\tau^{\vee})_K  \rightarrow (\pi \otimes \tau^{\vee})_K \otimes \Theta(\pi).  
	\] 
	Hence, if $\tau$ is a type of $\pi$, it follows that $\Theta(\pi)$ is a quotient of $\Theta(\tau)$.  Hence 
	$\Theta(\pi)$ is a quotient of $U(\mathfrak g')\otimes_{ U(\mathfrak k')} F_{\tau}$, a finitely generated module. 
	Furthermore, by the first assumption,  $Z(\mathfrak g')$ acts on $\Theta(\pi)$ by the infinitesimal character, corresponding to the 
	infinitesimal character of $\pi$. 
	Finitely generated plus infinitesimal character implies finite length. Hence $\Theta(\pi)$ has finite length. 
	
	
	Note that we do not assume that the second bullet holds with the roles of the two algebras switched. Hence we need a different argument to prove that $\Theta(\pi')$ has finite length:  the inequality, $\dim \Hom_K(\Theta(\pi'), \tau) \leq \dim \Hom_{K'} (F_{\tau}, \pi')$. To prove it, notice that we have the see-saw identity (switching the order of taking $\mathfrak g'$ and $K$-coinvariants) 
	\[ 
	\Hom_K(\Theta(\pi'), \tau) \cong \Hom_{\mathfrak g'} (\Theta(\tau), \pi'). 
	\] 
	Since $\Theta(\tau)$ is a quotient of $U(\mathfrak g')\otimes_{ U(\mathfrak k')} F_{\tau}$, the inequality follows from Frobenius reciprocity. 
	From the inequality we see that $\Theta(\pi')$ is admissible. This and infinitesimal character implies finite length. 
\end{proof} 

\subsection{Our dual pair} Recall that we are interested in dual pairs $G\times G'$ in quaternionic groups where $G'$ is the split Lie group of type $G_2$ 
and $G=\mathrm{Aut}(J)$ where $J$ is the Jordan algebra of $3\times 3$ hermitian matrices with coefficients $\mathbb R$, $\mathbb C$, $\mathbb H$ and $\mathbb O$, 
and the identity 
\[ 
\left(\begin{array}{ccc} 
	1 & & \\
	& -1 &  \\
	& & -1
\end{array} 
\right). 
\] 
The module $\Omega$ is the minimal representation $\Vmin$.  Let $K$ be the maximal compact subgroup of $G$. We have a see-saw diagram 
\begin{equation} \label{eqseesaw_general}
	\arraycolsep=1.5pt\def\arraystretch{1.35}
	\begin{array}{ccc}
		G & & \Spin(4,3)\\
		| & \bigtimes & | \\
		K & &  G'=\rG_{2,2} 
	\end{array}
\end{equation} 
Let $\tau$  be a $K$-type. In Sections \ref{S:E6}--\ref{S:F4} we prove that $\Theta(\tau)$ is an irreducible quaternionic representation of $\Spin(4,3)$. Let $F_{\tau}$ be its minimal type.  By Theorem  \ref{T:restriction_g2}, $\Theta(\tau)$ is $\mathfrak g'$-generated by $F_{\tau}$. In particular, the conditions of Theorem \ref{T:general} are satisfied 
for the dual pair $G\times G'$. Hence: 
\begin{cor}  The conclusions of Theorem \ref{T:general} hold for the dual pairs $G\times G'$ and $\Omega=\Vmin$, the minimal representation of the ambient quaternionic exceptional group. 
\end{cor}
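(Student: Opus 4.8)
The plan is to verify the two hypotheses of Theorem \ref{T:general} for the dual pair $G \times G' = \mathrm{Aut}(J) \times \rG_{2,2}$ sitting inside the appropriate quaternionic exceptional group, with $\Omega = \Vmin$. The first hypothesis, the correspondence of infinitesimal characters $\omega(Z(\mathfrak g)) = \omega(Z(\mathfrak g'))$, is not reproved here: as noted in the introduction, it follows from the compact-form correspondence of \cite{HPS} by the argument of \cite{LiDuke}, so I would simply cite that. The bulk of the work has already been done in Sections \ref{S:D4} through \ref{S:F4}, so the proof of the corollary is essentially an assembly of those results together with Theorem \ref{T:restriction_g2}.

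The key step is to produce, for each $K$-type $\tau$ of $G$, the finite-dimensional $K' = K_2$-module $F_\tau$ such that $\Theta(\tau)$ is a quotient of $U(\mathfrak g') \otimes_{U(\mathfrak k')} F_\tau$. Here I would use the see-saw \eqref{eqseesaw_general}: since $K \times \Spin(4,3)$ and $G \times \rG_{2,2}$ form a see-saw pair inside $\Spin(4,4) \cdot$ (something) and ultimately inside the quaternionic group, the lift $\Theta(\tau)$ is naturally a representation of $\Spin(4,3)$ (this is the $\Theta(\tau) = \Theta(B_3\text{-lift})$ appearing in the see-saw picture of the introduction). The content of Sections \ref{S:E6}, \ref{S:E7}, \ref{S:E8}, \ref{S:F4} (done case by case for $G$ of type $A_1$, $A_2$, $C_3$, $F_4$, i.e.\ the four real forms $\Spin(d,4)$, $E_{6,4}$, $E_{7,4}$, $E_{8,4}$, $F_{4,4}$) is precisely that $\Theta(\tau)$ is an \emph{irreducible} quaternionic representation of $\Spin(4,3)$, of the form $\sigma(\Spin(4,3), W_M[s])$ or a quaternionic discrete series $\bfA(\Spin(4,3), W_M[s])$, with an explicit minimal $K$-type; I take $F_\tau$ to be this minimal type, viewed as a $K_2$-module via the restriction $K_2 \subset K$ described in Section \ref{S:B3G2}.

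With $F_\tau$ in hand, the second hypothesis follows from Theorem \ref{T:restriction_g2}: that theorem says exactly that $\bfA(\Spin(4,3), W_M[s]) = U(\mathfrak g_2) \cdot ((s-2) \otimes W_M)$ for $s \geq 4$, i.e.\ the $B_3$ quaternionic representation is $\mathfrak g_2$-generated by its minimal type. Since $\Theta(\tau)$ is a quotient of such an $\bfA$ (being either equal to it or to its unique irreducible quotient), it is a fortiori $\mathfrak g_2$-generated by the image of $F_\tau$, hence is a quotient of $U(\mathfrak g_2) \otimes_{U(\mathfrak k_2)} F_\tau$. One should check that the relevant $s$ is always $\geq 4$: inspecting the tables in Sections \ref{S:E6}--\ref{S:F4}, the parameters $s$ occurring (e.g.\ $4 + a + b$, $6 + a$, $10 + a + b$, $k+3$, etc.) are all $\geq 3$, and the small edge cases (such as $\Theta(0)^+$ with $s = 3$ in the $F_4$ case) would need to be handled separately or absorbed — this is the one place requiring care. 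Once both hypotheses are confirmed, Theorem \ref{T:general} applies verbatim and yields the finite-length and multiplicity conclusions, which is exactly the statement of the corollary.

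The main obstacle, as indicated, is bookkeeping rather than a conceptual difficulty: one must check that across all the cases computed in Sections \ref{S:D4}--\ref{S:F4} the minimal type $F_\tau$ and the parameter $s \geq 4$ behave uniformly enough that Theorem \ref{T:restriction_g2} applies, and one must make sure the see-saw identifications of $\Theta(\tau)$ (the $K$-to-$B_3$ lift) are compatible with the branching $K_2 \subset K$ so that $F_\tau$ is genuinely a $K_2$-module of the right shape. Since the paper has organized all of these computations precisely so that this assembly goes through, the proof of the corollary itself should be short: cite \cite{LiDuke}/\cite{HPS} for the infinitesimal character matching, cite the four sections for irreducibility of $\Theta(\tau)$ and its minimal type, cite Theorem \ref{T:restriction_g2} for $\mathfrak g_2$-generation, and conclude by Theorem \ref{T:general}.
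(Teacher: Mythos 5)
Your proposal follows essentially the same route as the paper: cite \cite{HPS}/\cite{LiDuke} for the matching of infinitesimal characters, cite Sections \ref{S:E6}--\ref{S:F4} for the irreducibility of $\Theta(\tau)$ as a quaternionic $\Spin(4,3)$-representation with explicit minimal type $F_\tau$, invoke Theorem \ref{T:restriction_g2} for $\mathfrak g_2$-generation by $F_\tau$, and conclude by Theorem \ref{T:general}. Your observation about the hypothesis $s\geq 4$ (e.g.\ $\Theta(0)^+=\sigma(\Spin(4,3),(0,0)[3])$ in the $F_4$ case) is a legitimate edge case that the paper's own proof passes over in silence, so flagging it is to your credit.
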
 

\section{Quaternionic groups} \label{S:QG}

Let $\mathfrak g$ be a simple complex Lie algebra. In this paper we shall study representations of the quaternionic 
real group with the complexified Lie algebra isomorphic to $\mathfrak g$. Since we  work in the Language of $(\mathfrak g, K)$-modules, our first task is 
to describe the corresponding Cartan involution and the decomposition $\mathfrak g=\mathfrak k \oplus \mathfrak p$. 

\subsection{General case}  \label{S11} Fix a maximal Cartan algebra $\mathfrak t$. Let $\Phi$ be the corresponding root system. Pick a system 
$\Delta =\{\alpha_1, \ldots , \alpha_{\ell}\}$ of simple roots. Let $\alpha_0$ be the lowest root.
Let $G_{\mathbb C}$ be the corresponding Chevalley group (of adjoint type).  Let 
$\varphi_0: \SL(2,\mathbb C) \rightarrow G_{\mathbb C}$ be a homomorphism corresponding to $\alpha_0$. 
Then the Cartan involution is  
\[ 
\theta= \varphi_0 
\left( 
\begin{array}{cc} 
-1 & \\ 
& -1 
\end{array} 
\right). 
\]   
We shall now give a more detailed description of  the corresponding Cartan decomposition. 
Let $(e,h,f)$ be an $\mathfrak{sl}(2)$-triple corresponding to  the highest root $-\alpha_0$.   
Then the  centralizer of~$h$ in $\mathfrak g$ is a standard Levi subgroup $\mathfrak l$, corresponding to the set of simple roots perpendicular to $\alpha_0$.  
The nilpotent radical $\mathfrak n$ of the standard 
parabolic subgroup $\mathfrak q= \mathfrak l \oplus \mathfrak n$ has a decomposition $\mathfrak n_1 \oplus \mathfrak n_2$ given by 
$h$-grading. Then $\mathfrak n$ is a Heisenberg Lie algebra with the center $\mathfrak n_2 = \mathbb C \cdot e$. The nilpotent radical 
of the opposite parabolic $\bar{\mathfrak q}$ is 
 $\bar{\mathfrak n} = \mathfrak n_{-1}  \oplus \mathfrak n_{-2}$ where $\mathfrak n_{-2} = \mathbb C \cdot f$.  Let $\mathfrak m = [\mathfrak l, \mathfrak l]$. 
 If $\Phi$ is not type $\rA_{\ell}$ then $\mathfrak l = \mathfrak m \oplus \mathbb C \cdot h$.  
 The two summands in the Cartan decomposition are 
 \[ 
\mathfrak k= \mathfrak{sl}(2) \oplus \mathfrak m \text { and }  \mathfrak p= \mathfrak n_{-1}\oplus \mathfrak n_1. 
\] 
Let $K$ and $M$ be  the simply connected compact Lie groups with the complexified Lie algebra isomorphic to $\mathfrak k$ and 
 $\mathfrak m$, respectively. Then 
\[ 
K=\SU_0(2) \times M. 
\] 
We denote the first factor by $\SU_0(2)$, in order to distinguish it from other groups isomorphic to $\SU(2)$.  
Since $[f,\mathfrak n_1] =\mathfrak n_1$, and $M$ commutes with $f$, we see that  $\mathfrak n_{-1}$ and $\mathfrak n_1$ are isomorphic as $M$-modules. Let us denote this representation by $V_M$. Then $\mathfrak p \cong (1) \otimes V_M$, as $\SU_0(2) \times M$-modules, where $(m)$, throughout the text, denotes 
the irreducible representation of $\SU(2)$ with the highest weight $m$.  We list some cases in the following table.

\begin{table}[h]
\begin{center}
		\caption{The group $M$ and its representation $V_M$}
\begin{tabular}{l|l|l}
$G$ & $M$ & $V_M$  \\
\hline
 \hbox{\vbox{\vskip 5pt \hbox{$\Spin(d,4)$}}} & $\SU(2) \times \Spin(d)$ & $\bbC^2 \otimes \bbC^d$  \\ 
$\rE_{6,4}$ & $\SU(6) $ & $\wedge^3\mathbb C^6$ \\
$\rE_{7,4}$ & $\Spin(12)$ & $\mathbb C^{32}$  \\
$\rE_{8,4}$ & $\rE_7$ & $\mathbb C^{56}$   \\
$\rF_{4,4}$ & $\rSp(3)$ & $\wedge^3\mathbb C^6/\mathbb C^6$
\end{tabular}
\end{center}
\label{table0}
\end{table}

Here $\mathbb C^{32}$ and $\mathbb C^{56}$ are the spin and the miniscule 56-dimensional representations of $\Spin(12)$ and $\rE_7$, respectively. 
We note that the adjoint forms of exceptional quaternionic groups are topologically connected, see \cite[Section 10.3]{AT}. 
Hence their irreducible representations are the same as irreducible representations of the simply connected cover with the trivial central character.

\subsection{Example of $\rE_6$} \label{SrootsysmE6}
Let $\Phi(\rE_6)$ denote the root system of type $\rE_6$.
We label the root system as according to \cite{Bou}: 
The set of positive roots consists of 
\begin{align*}
& \pm \varepsilon_i + \varepsilon_j \ \ \text{ for } 1 \leq i < j \leq 5, \\
& \dfrac{1}{2}(\varepsilon_8 - \varepsilon_7 - \varepsilon_6 \pm \varepsilon_1 \pm \varepsilon_2 \pm \cdots \pm \varepsilon_5) \\ & {} \ \text{ (even number of negative signs)}.
\end{align*}
The simple roots are
\begin{align}
& \alpha_1 =  \dfrac{1}{2}(\varepsilon_1 - \varepsilon_2 - \varepsilon_3 - \varepsilon_4 - \varepsilon_5 + \varepsilon_8 - \varepsilon_7 - \varepsilon_6), \nonumber \\
& \alpha_2 = \varepsilon_1 + \varepsilon_2, \ \ \alpha_3 = \varepsilon_2 - \varepsilon_1, \ \ \alpha_4 = \varepsilon_3 -\varepsilon_2, \ \ \alpha_5 = \varepsilon_4 - \varepsilon_3, \ \ \alpha_6 = \varepsilon_5 - \varepsilon_4. \label{eqsimpleroots}
\end{align}
The extended root system is 
 
\begin{picture}(200,140)(-120,-25)

\put(79,23){\line(0,-1){10}}
\put(79,-5){\circle{6}}

\put(79,73){\line(0,-1){30}}
\put(79,40){\circle{6}}


\put(74,29){$\alpha_2$}

\put(02,82){$\alpha_1$}

\put(38,82){$\alpha_3$}

\put(74,82){$\alpha_4$}

\put(110,82){$\alpha_5$}

\put(146,82){$\alpha_6$}

\put(74, 2){$\alpha_0$}

\put(07,76){\circle{6}}
\put(10,76){\line(1,0){30}}

\put(43,76){\circle{6}}
\put(46,76){\line(1,0){30}}
\put(79,76){\circle{6}}

\put(82,76){\line(1,0){30}}
\put(115,76){\circle{6}}

\put(118,76){\line(1,0){30}}
\put(151,76){\circle{6}}

\end{picture}
where $-\alpha_0$ is the highest root 
\[ 
-\alpha_0 =  \dfrac{1}{2}(\varepsilon_1 + \varepsilon_2 + \varepsilon_3 + \varepsilon_4 + \varepsilon_5 +\varepsilon_8 - \varepsilon_7 - \varepsilon_6)
= \alpha_1 + 2 \alpha_2 + 2 \alpha_3 + 3 \alpha_4 + 2 \alpha_5 + \alpha_6.
\] 

Now it is evident that $K\cong \SU_0(2)\times \SU(6)$, corresponding to the diagram with $\alpha_2$ removed, 
and the highest weight of $\mathfrak p$ is $-\alpha_2$. Hence, as a $K$-module, $\mathfrak p$ is a tensor product 
of the standard two-dimensional representation of $\SU_0(2)$, and the third fundamental representation of $\SU(6)$, whose highest weight is (represented by) 
$(1,1,1,0,0,0)$.


\section{Quaternionic representations} \label{S:QR} 

In this section we give a self contained treatment of quaternionic representations of groups $G$ introduced in the previous section: their construction, $K$-types and Lie algebra action. 
We continue with the notation introduced in the previous section. 

\subsection{Construction} Let $W_M$ be a finite dimensional representation of $\mathfrak m$ (this is the same as a representation of $M$). 
Let $s\geq 2$ be an integer.  
Extend this to a representation $W_M[s]$ of $\mathfrak l=\mathfrak m \oplus \mathbb C\cdot h$ so that $h$ acts by multiplication by $s$. 
Consider the generalized Verma module 
\[ 
V= V(\mathfrak g, W_M[s])= U(\mathfrak g)\otimes_{U(\bar{\mathfrak q})}W_M[s] \cong U(\mathfrak n)\otimes W_M[s]. 
\] 
If $W_M$ is irreducible with the highest weight $\mu$, then the infinitesimal character of $V$ is 
\begin{equation} \label{E:inf_A_general} 
\mu + s\alpha_0/2 +\rho
\end{equation} 
 where $\rho$ is the half sum of positive roots. 
 Let $L = \rU(1) \times  M \subset K$ where the complexified Lie algebra of $\rU(1)$ is $\mathbb C\cdot h$.  Observe that $V$ is a $(\mathfrak g,L)$-module. 
Let $K_1=\SU(2)$ (the first factor of $K$) and $L_1=L\cap  K_1=\rU(1)$. Let $\Gamma_{K_1/L_1}$ be the corresponding Zuckerman functor. 
Let 
\[ 
\bfA=\bfA(\mathfrak g, W_M[s]) =  \Gamma^1_{K_1/L_1}(V). 
\] 
It is a $(\mathfrak g, K)$-module.  
In this section we shall study $\bfA$, in  particular, we shall prove  that $\bfA$ is generated by its minimal $K$-type $(s-2)\otimes W_M$. 

\subsection{Figuring out $K$-types} 
Since the Zuckerman functor in degree 0 amounts to taking $\mathfrak k$-finite vectors, our first task is to understand $V$ as a $\mathfrak k$-module. To that end, 
recall the $\mathfrak{sl}(2)$-triple, $(e,h,f)$ where $e\in \mathfrak n_2$,   $f\in \mathfrak n_{-2}$, and $h=[e,f]$, spanning the first summand of $\mathfrak k$. 
Let $U(\mathfrak n)$ be the enveloping algebra  of $\mathfrak n$. It  is  a free module over $U(\mathfrak n_2)\cong \mathbb C[e]$ 
(here $e$ is of the $\mathfrak{sl}(2)$  and there is an obvious filtration 
of $U(\mathfrak n)$ by $U(\mathfrak n_2)$-submodules $U_k(\mathfrak n)$ such that 
\[ 
U_0(\mathfrak n)= U(\mathfrak n_2) \text{ and } U_k(\mathfrak n)/U_{k-1}(\mathfrak n)\cong U(\mathfrak n_2) \otimes S^k(V_M).  
\] 
The filtration of $U(\mathfrak n)$ gives a filtration $V_k = U(\mathfrak n)_k\otimes W_M[s]$ of $V$. 
An easy check shows that the filtration is invariant under the action of $\mathfrak{sl}(2)=\langle e,h,f\rangle$, and 
\[ 
 V_k/V_{k-1}\cong V(s+k) \otimes S^k(V_M)\otimes W_M 
\]  
as $\mathfrak{sl}(2) \times M$-module, where $V(s+k)$ is irreducible $\mathfrak{sl}(2)$ module of lowest $h$-weight equal to $s+k$.  
Since $s\geq 2$, these have different infinitesimal characters. Hence the filtration splits, 
\[ 
V=\bigoplus_{k\geq 0} V(s+k) \otimes \tau_k 
\] 
where $\tau_k\cong S^k(V_M)\otimes W_M$.  Since $\Gamma_{K_1/L_1}^1(V(s+k))=(s+k-2)$, 
 the irreducible representation of $\SU(2)$ with the highest weight $s+k-2$, it follows that
\begin{equation} \label{E:K_types} 
\bfA=\bfA(G, W_M[s]) = \Gamma^1(V)= \bigoplus_{k=0}^{\infty} (s+k-2) \otimes \tau_k. 
\end{equation} 
This gives us a $K$-types description of $\bfA$. Observe that $\bfA(G, W_M[s])$ is $\SU_0(2)$-admissible.  

\subsection{Lie algebra action} In order to compute the action of $\mathfrak p$ on $\bfA$, we first need to do that for $V$. 
The action of $\mathfrak p=(1)\otimes V_M$  on $V$ is a $K$-equivariant homomorphism 
\begin{equation} \label{E:action_V} 
\pi : \mathfrak p\otimes V \rightarrow V. 
\end{equation} 
Let $\pi_k$ be the restriction of $\pi$ to $\mathfrak p \otimes V(s+k)\otimes \tau_k$. Clearly $\pi$ is the sum of $\pi_k$.  
Since the tensor product of $(1)$, the factor of $\mathfrak p$, and the lowest weight module $V(k+s)$ decomposes as 
\[ 
(1)\otimes V(s+k) \cong V(s+k-1) \oplus V(s+k+1), 
\] 
 we can decompose 
\[
\mathfrak p\otimes (V(s+k) \otimes \tau_k) \cong V(s+k-1)\otimes (V_M \otimes \tau_k) \,\, \oplus  \,\, V(s+k+1)  \otimes (V_M \otimes \tau_k). 
\]
Accordingly, we can also decompose $\pi_k=\pi_k^-\oplus \pi_k^+$, a sum of restrictions of $\pi$ to the two summands above. 
The images of $\pi_k^-$ and $ \pi_k^+$ sit, 
respectively, in $V(s+k-1)\otimes \tau_{k-1}$ and $V(s+k+1)\otimes \tau_{k+1}$. Summarizing, $\pi$ is a sum of the maps 
\begin{equation}\label{two_in_V}
\pi^{\pm}_k:  V(s+k \pm 1) \otimes V_M \otimes  \tau_k \rightarrow V(s+k \pm 1 ) \otimes \tau_{k \pm 1}. 
\end{equation}
Since $V(s+k \pm 1)$ are irreducible $\mathfrak{sl}(2)$-modules, we can write $\pi^{\pm}_k =1\otimes \sigma_k^{\pm}$ for a couple of  $M$-homomorphisms
\[ 
\sigma_{k}^-:  V_M \otimes \tau_k \rightarrow \tau_{k-1} \text { and } \sigma_{k}^+:  V_M \otimes \tau_k \rightarrow \tau_{k+1}. 
\] 
Now our next task is to explicate $\sigma_k^{\pm}$. 
To that end, observe that $\tau_k$ can be defined as the subspace of $V$ as follows: 
\[ 
\tau_k=\{ v\in V ~|~ f\cdot v=0 \text{ and } h\cdot v = (s+k) v \}. 
\] 

\begin{lemma}  Let $v\in \tau_k$, $x\in \mathfrak n_1$ and $y=[f,x]\in \mathfrak n_{-1}$. Then 
\begin{itemize} 
\item $y\cdot v\in \tau_{k-1}$. 
\item $x\cdot v\in \tau_{k+1} \oplus e\cdot \tau_{k-1}$. 
\end{itemize} 
\end{lemma}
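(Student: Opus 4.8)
The plan is to compute directly inside the generalized Verma module $V = U(\mathfrak n) \otimes W_M[s]$, using the $\mathfrak{sl}(2)$-grading by $h$-weight together with the elementary bracket relations among $\mathfrak n_1$, $\mathfrak n_{-1}$, $\mathfrak n_2 = \mathbb C e$, $\mathfrak n_{-2} = \mathbb C f$. Recall $\tau_k = \{v \in V : f\cdot v = 0,\ h\cdot v = (s+k)v\}$, i.e.\ $\tau_k$ is the lowest-weight space of the $\mathfrak{sl}(2)$-summand $V(s+k)$. So to prove $y\cdot v \in \tau_{k-1}$ I must check two things: that $y\cdot v$ is killed by $f$, and that $h$ acts on it by $s+k-1$.

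For the $h$-weight: since $y \in \mathfrak n_{-1}$ has $h$-weight $-1$ (it lies in the $(-1)$-graded piece $\mathfrak n_{-1}$), and $v$ has $h$-weight $s+k$, we get $h\cdot(y\cdot v) = [h,y]\cdot v + y\cdot(h\cdot v) = -y\cdot v + (s+k)y\cdot v = (s+k-1)(y\cdot v)$. For annihilation by $f$: $f\cdot(y\cdot v) = [f,y]\cdot v + y\cdot(f\cdot v) = [f,y]\cdot v$ since $f\cdot v = 0$. Now $[f,y]$ lies in $[\mathfrak n_{-2},\mathfrak n_{-1}] \subseteq \mathfrak n_{-3} = 0$ because the grading only runs from $-2$ to $2$ (the Heisenberg structure). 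Hence $f\cdot(y\cdot v) = 0$, giving $y\cdot v \in \tau_{k-1}$ as claimed. I should double-check at the level of $\mathfrak{sl}(2)$-representation theory that $y\cdot v$ need not vanish identically, but that is irrelevant to the containment statement; however, I do need $y\cdot v \in V$, which is automatic.

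For the second bullet, $x \in \mathfrak n_1$ has $h$-weight $+1$, so by the same weight computation $h\cdot(x\cdot v) = (s+k+1)(x\cdot v)$; thus $x\cdot v$ lies in the $h$-weight-$(s+k+1)$ subspace of $V$. From the decomposition $V = \bigoplus_j V(s+j)\otimes\tau_j$, the $(s+k+1)$-weight space is the sum over those $j$ with $s+j \leq s+k+1$ and $s+k+1 \equiv s+j \pmod 2$, i.e.\ $j \in \{k+1, k-1, k-3, \dots\}$, and within $V(s+j)\otimes\tau_j$ the $(s+k+1)$-weight space is $e^{(k+1-j)/2}\cdot\tau_j$ (applying the raising operator $e$ to the lowest weight vector). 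So a priori $x\cdot v \in \tau_{k+1} \oplus e\cdot\tau_{k-1} \oplus e^2\cdot\tau_{k-3}\oplus\cdots$. To cut this down to just the first two summands I use the filtration $V_k = U(\mathfrak n)_k \otimes W_M[s]$ from the preceding discussion: $v \in \tau_k$ lives in $V_k$ modulo lower filtration, and multiplication by $x \in \mathfrak n_1$ raises the filtration degree by exactly one (since $\mathfrak n_1$ is in degree one of the PBW-type filtration with respect to $U(\mathfrak n_2) = \mathbb C[e]$), so $x\cdot v \in V_{k+1}$. Intersecting $V_{k+1}$ with the $(s+k+1)$-weight space kills all the $e^{\geq 2}\cdot\tau_{\leq k-3}$ terms, because $e^j\cdot\tau_{k+1-2j}$ sits in filtration degree $\geq (k+1-2j) + 2j = k+1$ only when... hmm, I need to track filtration degrees carefully: $\tau_m \subset V_m$, and $e$ has filtration degree $0$ over $\mathbb C[e]=U(\mathfrak n_2)$, so $e^j\cdot\tau_m \subset V_m$; thus the weight-$(s+k+1)$ vectors in $V_{k+1}$ are exactly $\tau_{k+1} \oplus e\cdot\tau_{k-1}$ plus possibly pieces of $\tau_{k-1+2j}\cap V_{k+1}$ — I will instead argue via $V/V_k$: modulo $V_k$, $x\cdot v$ has weight $s+k+1$ and lies in $V_{k+1}/V_k \cong V(s+k+1)\otimes S^{k+1}(V_M)\otimes W_M$, whose $(s+k+1)$-weight space is precisely $\tau_{k+1}$, and the kernel of $V_{k+1}\to V_{k+1}/V_k$ on weight-$(s+k+1)$ vectors is $(V_k)_{s+k+1} = e\cdot(\tau_{k-1}) \oplus (\text{weight }s+k+1\text{ part of }V_{k-2}\text{ etc.})$; since $v$ itself is the minimal generator, $x\cdot v$ maps into $V_{k+1}$ and I then need the finer claim that the $e^2\tau_{k-3}$-components vanish. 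The cleanest route: note $f^2\cdot(x\cdot v)$ would have to lie in $\tau_{k-2}$-type weight spaces, and compute $f\cdot(x\cdot v) = [f,x]\cdot v + x\cdot(f\cdot v) = y\cdot v \in \tau_{k-1}$ by the first bullet, then $f^2\cdot(x\cdot v) = f\cdot(y\cdot v) = 0$; an element of $\bigoplus_j e^{(k+1-j)/2}\tau_j$ is killed by $f^2$ iff it has no component in $V(s+j)$ for $s+j \leq s+k-3$, i.e.\ exactly $x\cdot v \in \tau_{k+1}\oplus e\cdot\tau_{k-1}$. That last observation — $f\cdot(x\cdot v) = y\cdot v$ followed by $f\cdot(y\cdot v) = 0$, hence $x\cdot v$ is annihilated by $f^2$ and therefore constrained to the top two $\mathfrak{sl}(2)$-levels — is the heart of the argument and the step I expect to need the most care to phrase cleanly, since it converts a filtration statement into a clean representation-theoretic one.

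Finally I would remark that these two facts are exactly what is needed to identify the maps $\sigma_k^{\pm}$ in the subsequent analysis: $\sigma_k^-$ is (up to scalar) $v \mapsto y\cdot v$ composed with the $\mathfrak m$-projection, and $\sigma_k^+$ is the composite of $v\mapsto x\cdot v$ with projection onto the $\tau_{k+1}$-summand. The whole proof is a few lines of weight bookkeeping plus the $f^2$-trick; no heavy machinery is required, and the only genuine subtlety is ruling out the lower $e^j\tau_{k+1-2j}$ terms, handled by the observation that $f^2$ annihilates $x\cdot v$.
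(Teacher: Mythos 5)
Your proof is correct and follows essentially the same route as the paper: the first bullet is the same weight computation plus $[f,y]=0$, and for the second bullet the paper likewise observes that $f\cdot(x\cdot v)=y\cdot v\in\tau_{k-1}$ (hence is killed by $f$), which is exactly your $f^2$-trick ruling out the $e^j\cdot\tau_{k+1-2j}$ components with $j\geq 2$. The filtration digression you abandon midway is unnecessary, as you conclude yourself; the final argument you settle on is the one in the paper.
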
 
\begin{proof} Since $f$ and $y$ commute, $f\cdot (y\cdot v)= y\cdot (f\cdot v)=0$.  Moreover, 
$y\cdot v$ has $h$-weight $s+k-1$. Hence $y\cdot v\in \tau_{k-1}$.  For the second bullet, the $h$-weight of $x\cdot v$ is $s+k+1$.  Hence 
\[ 
x\cdot v \in \tau_{k+1} + e\cdot \tau_{k-1} + \cdots.  
\] 
Since $[f,x]=y$ and $f\cdot v=0$,  it follows that $f\cdot (x\cdot v)= y \cdot v\in \tau_{k-1}$. Hence 
\[ 
x\cdot v = v_1 + \frac{1}{s+k-1} y \cdot v 
\] 
with $v_1\in  \tau_{k+1}$.  If $v$ corresponds to  $p\otimes w\in S^k(\mathfrak n_1) \otimes W_M$, then $v_1$ corresponds to  $xp\otimes w$ 
(natural multiplication of $x$ and $p$).  The formula for $y\cdot v$ should involve some differentiation of $p$ by $y$ (perhaps it is better to call it
differentiation by $x$).
\end{proof} 

Now, from the (proof of the) lemma it is easy to see that 
 \[ 
 \sigma_k^{+} : V_M  \otimes S^k(V_M)\otimes W_M \rightarrow  S^{k+1}(V_M)\otimes W_M
 \] 
 is given by $(x\otimes p\otimes w) \mapsto  xp \otimes W$, in particular, $\sigma_{k}^+$ is \underline{surjective}. 

Now we can derive the action of $\mathfrak p$ on $\bfA$, as follows.  The action of $\mathfrak p$ on $V$ is given by a $K$-equivariant map (\ref{E:action_V}). 
By functoriality, we have 
\begin{equation} \label{E:action_A} 
\Gamma^1(\mathfrak \pi) : \Gamma^1(\mathfrak p \otimes V) \rightarrow \Gamma^1(V). 
\end{equation} 
Since $\mathfrak p$ is $K$-finite, we have a natural isomorphism $\Gamma^1(\mathfrak p \otimes V)\cong \mathfrak p\otimes  \Gamma^1(V)$ \cite[page 177]{RRG1},
and thus the above map can be reinterpreted as the action of $\mathfrak p$ on $\Gamma^1(V)$ \cite[page 179]{RRG1}. Now recall that $\pi$ is the sum of  $\pi_k^{\pm}$ in (\ref{two_in_V}). 
Hence, by functoriality, $\Gamma^1(\pi)$ is a sum of $K$-maps
\[ 
\Gamma^1(\pi^{-}_k) :  (s+k -3) \otimes V_M \otimes  \tau_k \rightarrow (s+k -3 ) \otimes \tau_{k - 1} 
\] 
and 
\[ 
\Gamma^1(\pi^{+}_k) :  (s+k -1) \otimes V_M \otimes  \tau_k \rightarrow (s+k -1 ) \otimes \tau_{k + 1}. 
\] 
Recall that $\pi_k^{\pm}= 1\otimes \sigma_k^{\pm}$ where $1$ is the identity on $V(s+k\pm 1)$ and 
$\sigma_k^{\pm} : V_M \otimes \tau_k\rightarrow \tau_{k \pm 1}$.   Hence $\Gamma^1(\pi_k^{\pm}) =1\otimes \sigma_k^{\pm}$. In particular,  $\Gamma^1(\pi^{+}_k)$ is a surjection. 
Thus we have the following:

\begin{thm}  \label{T:generation}
Let $A_k = \Gamma^1(V_k)$ be the filtration of $\bfA$. 
The action of $\mathfrak p$ on $A_k$ gives a surjection 
\[ 
\mathfrak p \otimes A_k \rightarrow A_{k+1}/A_k.
\] 
In particular, $\bfA=\bfA(G, W_M[s])$ is generated by $\tau_0= (s-2)\otimes W_M[s]$.  
\end{thm}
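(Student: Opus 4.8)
The plan is to deduce the theorem directly from the structure of the filtration $A_k = \Gamma^1(V_k)$ of $\bfA$ together with the surjectivity of $\sigma_k^+$ established above. First I would observe that, by exactness of $\Gamma^1$ on the short exact sequences $0 \to V_{k-1} \to V_k \to V_k/V_{k-1} \to 0$ (using that the relevant $\SU_0(2)$-types are in the degree where the Zuckerman functor is nonzero, and that the higher derived functors vanish here since each $V(s+j)$ with $s \geq 2$ contributes only in degree $1$), we get $A_k/A_{k-1} \cong \Gamma^1(V_k/V_{k-1}) \cong (s+k-2) \otimes \tau_k$, matching the $K$-type decomposition \eqref{E:K_types}. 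So the associated graded of $A_\bullet$ is exactly the $K$-isotypic pieces of $\bfA$, each appearing once.

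Next I would analyze the map $\mathfrak p \otimes A_k \to A_{k+1}/A_k$ induced by the $\mathfrak g$-action. Since $\mathfrak p$ acts on $\bfA$ by $\Gamma^1(\pi)$, and $\pi$ restricted to $\mathfrak p \otimes V_k$ lands in $V_{k+1}$ (because $\mathfrak n_{\pm 1}$ shift the filtration degree by at most one — this is visible from $\pi_k^{\pm}$ mapping into $\tau_{k\pm1}$ and the filtration being $\mathfrak{sl}(2)$-stable), we get an induced map on graded pieces. Composing with the projection $A_{k+1}/A_k \cong (s+k-1)\otimes \tau_{k+1}$, the only component of $\Gamma^1(\pi)|_{\mathfrak p \otimes A_k}$ that can contribute is $\Gamma^1(\pi_k^+) = 1 \otimes \sigma_k^+$, since $\Gamma^1(\pi_k^-)$ lands in $A_{k-1}$-graded piece $\tau_{k-1}$, not $\tau_{k+1}$. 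As shown, $\sigma_k^+ : V_M \otimes \tau_k \to \tau_{k+1}$ is the surjection $x \otimes p \otimes w \mapsto xp \otimes w$, so the induced map $\mathfrak p \otimes A_k \to A_{k+1}/A_k$ is surjective.

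From surjectivity on each graded step, an easy induction gives that the $\mathfrak g$-submodule of $\bfA$ generated by $A_0 = \tau_0$ contains $A_k$ for all $k$: indeed if it contains $A_k$ then, since $\mathfrak p \cdot A_k$ surjects onto $A_{k+1}/A_k$, it also contains $A_{k+1}$. Since $\bfA = \bigcup_k A_k$, the submodule generated by $\tau_0$ is all of $\bfA$. Finally I would note that $A_0 = \Gamma^1(V_0) = \Gamma^1(V(s)\otimes W_M) = (s-2)\otimes \tau_0 = (s-2)\otimes W_M$ as a $K$-type, which is the minimal $K$-type; this identifies the generator. I expect the main subtlety to be the exactness/vanishing bookkeeping for the Zuckerman functor $\Gamma^1_{K_1/L_1}$ on the filtration — ensuring that applying $\Gamma^1$ commutes with the filtration and that there are no contributions from $\Gamma^0$ or $\Gamma^{\geq 2}$ — but this follows from the already-established computation that $\Gamma^1(V(s+k)) = (s+k-2)$ with the other degrees vanishing, so it is really just a matter of assembling it carefully.
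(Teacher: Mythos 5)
Your proposal is correct and follows essentially the same route as the paper: decompose the $\mathfrak p$-action into the components $\pi_k^{\pm}=1\otimes\sigma_k^{\pm}$, note that only $\Gamma^1(\pi_k^+)=1\otimes\sigma_k^+$ can hit the graded piece $(s+k-1)\otimes\tau_{k+1}$, invoke the surjectivity of $\sigma_k^+$, and induct up the filtration. The only cosmetic difference is that the paper avoids your exactness bookkeeping for $\Gamma^1$ by observing that the filtration of $V$ already splits as a direct sum (the $\mathfrak{sl}(2)$-summands $V(s+k)$ have distinct infinitesimal characters since $s\geq 2$), so $\Gamma^1$ is simply applied summand by summand.
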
  

\begin{cor} Let $s\geq 2$ be an integer. Then $\bfA(G, W_M[s])$ has a unique irreducible quotient, denoted by $\sigma(G, W_M[s])$. It contains the 
$K$-type $(s-2)\otimes W_M[s]$. 
\end{cor}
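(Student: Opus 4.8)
The statement is a purely formal consequence of Theorem \ref{T:generation} together with multiplicity-one of the minimal $K$-type, so the plan is essentially book-keeping. I will assume $W_M$ is irreducible, so that $\tau_0=(s-2)\otimes W_M$ (which is the $K$-type written $(s-2)\otimes W_M[s]$ in the statement, since $h\notin\mathfrak m$) is an irreducible representation of $K=\SU_0(2)\times M$. First I would record, from the $K$-type decomposition (\ref{E:K_types}), that $\bfA=\bigoplus_{k\geq 0}(s+k-2)\otimes\tau_k$ with $\tau_k\cong S^k(V_M)\otimes W_M$; since for $k\geq 1$ the $\SU_0(2)$-factor $(s+k-2)$ is not isomorphic to $(s-2)$, the $K$-type $\tau_0$ occurs in $\bfA$ with multiplicity exactly one. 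Consequently there is a well-defined $K$-equivariant isotypic projection $p\colon\bfA\to\bfA[\tau_0]$, and for any $(\mathfrak g,K)$-submodule $N\subseteq\bfA$ we have $p(N)=N\cap\bfA[\tau_0]$, which is a $K$-submodule of the irreducible module $\bfA[\tau_0]$, hence either $0$ or all of $\bfA[\tau_0]$.

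Next I would argue that every proper $(\mathfrak g,K)$-submodule $N\subsetneq\bfA$ satisfies $N\cap\bfA[\tau_0]=0$: otherwise $N\cap\bfA[\tau_0]=\bfA[\tau_0]$, so $N$ contains $\tau_0$, and then $N=\bfA$ by Theorem \ref{T:generation} (which asserts that $\bfA$ is generated by $\tau_0$), contradicting properness. Then let $N_{\max}$ be the sum of all proper $(\mathfrak g,K)$-submodules of $\bfA$. For any $v\in N_{\max}\cap\bfA[\tau_0]$, write $v$ as a finite sum $v=\sum_i n_i$ with each $n_i$ lying in a proper submodule; applying $p$ and using $p(n_i)=0$ for all $i$ gives $v=p(v)=0$. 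Hence $N_{\max}\cap\bfA[\tau_0]=0$, so $N_{\max}\neq\bfA$. By construction $N_{\max}$ contains every proper submodule, so it is the unique maximal proper $(\mathfrak g,K)$-submodule, and therefore $\sigma(G,W_M[s]):=\bfA/N_{\max}$ is the unique irreducible quotient of $\bfA$ (any submodule strictly between $N_{\max}$ and $\bfA$ would be proper, hence contained in $N_{\max}$). Finally, since $N_{\max}\cap\bfA[\tau_0]=0$, the image of $\tau_0$ in $\bfA/N_{\max}$ is nonzero, so $\sigma(G,W_M[s])$ contains the $K$-type $(s-2)\otimes W_M$, as claimed.

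There is no genuine obstacle here; all the substance lies in Theorem \ref{T:generation}. The only point demanding a moment's care is the multiplicity-one assertion for the minimal $K$-type, which is what allows the isotypic projection $p$ to detect properness of submodules; this in turn is exactly why irreducibility of $W_M$ is needed (if $W_M$ were reducible the generalized Verma module, and hence $\bfA$, would split as a direct sum indexed by the irreducible constituents of $W_M$, and the uniqueness statement would fail).
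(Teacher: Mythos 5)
Your proof is correct and is precisely the standard argument the paper has in mind: the corollary is stated without proof as an immediate consequence of Theorem \ref{T:generation}, since a module generated by a $K$-type occurring with multiplicity one has a unique maximal proper submodule (the sum of all submodules missing that type), hence a unique irreducible quotient containing it. Your remark that $W_M$ must be irreducible for the statement to hold is a fair observation about an implicit hypothesis in the paper.
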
 

\subsection{Restriction to a subalgebra}  \label{SS:restricting_A} 
Here we recall some results of \cite{thesis} and \cite{restriction}. Assume we have a simple subalgebra $\mathfrak g' \subseteq \mathfrak g$ such that 
\[ 
\langle e, h,f \rangle \subseteq \mathfrak g'.
\] 
In this situation, it is possible to describe the restriction of $\bfA(\mathfrak g, W_M[s])$ to $\mathfrak g'$.  
Using the $h$-grading,  we can define the Heisenberg subalgebra $\mathfrak n'\subseteq \mathfrak g'$, in the same way as for $\mathfrak g$. In particular, 
 $\mathfrak n'=\mathfrak g' \cap \mathfrak n$, $\mathfrak n'= \mathfrak n'_{1}+ \mathfrak n'_{2}$ and $\mathfrak n'_2=\mathbb C\cdot e$. 
We have a filtration of $U(\mathfrak n)$ by $U(\mathfrak n')$-submodules such that 
\[ 
U_1(\mathfrak n) = U(\mathfrak n') \text{ and }   U_k(\mathfrak n) /U_{k-1}(\mathfrak n) \cong U(\mathfrak n') \otimes S^k(\mathfrak n/\mathfrak n'). 
\] 
This filtration gives a $\mathfrak g'$-filtration of the Verma module $V(\mathfrak g, W_M[s])$, which in turn gives a filtration $F_1\subseteq F_2 \subseteq \ldots $  of $\bfA(\mathfrak g, W_M[s])$
such that 
\[ 
F_k/F_{k-1} \cong \bfA(\mathfrak g',  S^k(\mathfrak n/\mathfrak n') \otimes W_M[s+k]). 
\] 
Thus the restriction problem reduces to decomposing $S^k(\mathfrak n/\mathfrak n') \otimes W_M$ into irreducible $M'$-summands. 

We shall almost exclusively use this in the example $G'=\Spin(4,3)$ and $G=\Spin(4,4)$. Then  the embedding $M'\subset M$ is given by 
\[ 
M'\cong \SU(2) \times \Spin(3) \subset \SU(2)\times\SU(2) \times \SU(2)\cong M 
\]  
where $\Spin(3)\cong \SU(2)$ embeds diagonally into last two $\SU(2)$ in $M$.  Moreover,  
\[ 
\mathfrak n'_{1}\cong (1)\otimes (2) \subset (1)\otimes (1) \otimes (1) \cong \mathfrak n_1. 
\] 
Hence 
\[ 
\mathfrak n/\mathfrak n' \cong (1)\otimes (0)  \text{ and } S^k(\mathfrak n/\mathfrak n')\cong (k)\otimes (0) 
\] 
as $M'=\SU(2)\times \Spin(3)$-modules. 

\section{Restriction of quaternionic representations from $\Spin(4,3)$ to $G_2$}  \label{S:B3G2} 

In this section we prove that  the restriction of quaternionic representations $\Spin(4,3)$ to $G_{2,2}$ is finitely generated, for the relevant 
inclusion of $G_{2,2}$ into $\Spin(4,3)$,  where $G_{2,2}$ is the split group of exceptional type $\rG_2$. 

\smallskip 
Recall that the maximal compact subgroup of $\Spin(4,3)$ is 
(a quotient of) $K=\SU_0(2) \times M$ where $M= \Spin(3) \times \SU(2)$.  Let 
$\mathfrak k \oplus \mathfrak p$ be the Cartan decomposition of the complex Lie algebra of $\Spin(4,3)$.  Recall, from Section \ref{S:QG},  that 
$\SU_0(2) \times M$-modules, we have $\mathfrak p = (1) \otimes V_M$, where $V_M=(2)\otimes(1)$.

On the other hand, the maximal compact subgroup of $G_{2,2}$ 
is (a quotient of) 
\[ 
K_2=\SU_s(2) \times \SU_l(2)
\] 
where the two $\SU(2)$ correspond to short and long compact roots, respectively, as the notation indicates. 
Let $\mathfrak k_2\oplus \mathfrak p_2$ be the Cartan decomposition of the complex Lie algebra of $G_{2,2}$. 
Then $\mathfrak p_2= (3)\otimes (1)$ as $\SU_s(2) \times \SU_l(2)$-modules.

We have an embedding  $G_{2,2}\subset \Spin(4,3)$ so that $K_2\subset K$ is given by 
\[ 
\SU_s(2) \times \SU_l(2)  \subseteq \SU_0(2)\times \Spin(3) \times \SU(2)
\] 
where $\SU_s(2)$ embeds diagonally into $\SU_0(2)\times \Spin(3)$, and $\SU_l(2)$ maps to the last factor $\SU(2)$.   
The inclusion $\mathfrak p_2 \subset \mathfrak p$ is given 
\[ 
(3)\otimes (1) \subset (1) \otimes (2)\otimes (1) 
\] 
where $(3)$ embeds as a summand of the first two factors $(1)\otimes (2)= (1)\oplus (3)$, and the map is the identity on the last factor $(1)$.

\begin{thm} \label{T:restriction_g2} 
Assume $s\geq 4$. Let $W_M$ be any finite-dimensional $M$-module.  
Let $\mathfrak g_2=\mathfrak k_2\oplus \mathfrak p_2$ be the complex Lie algebra of $G_{2,2}$. 
Let $U(\mathfrak g_2)$ denote the universal enveloping algebra of $\mathfrak g_2$.  Then 
\[
\bfA(\Spin(4,3), W_M[s])=U(\mathfrak g_2) \cdot ((s-2) \otimes W_M). 
\] 
\end{thm}

\begin{proof} 
We shall prove that the filtration $A_k$ is contained in $U(\mathfrak g_2) \cdot ((s-2) \otimes W_M)$ by induction on $k$. For $k=0$ there is 
nothing to prove. For the induction step, we shall act by $\mathfrak p_2$ on $(s+k-2)\otimes \tau_k$ and prove that we get $A_{k+1}$ modulo $A_k$. 
By Theorem \ref{T:generation} the action of $\mathfrak p$ on $(s+k-2)\otimes \tau_k$ gives a surjection.  
\[ 
\delta  : \mathfrak p\otimes [ (s+k-2) \otimes \tau_k]\rightarrow A_{k+1}/A_k \cong (s+k-1) \otimes \tau_k
\] 
Using the Clebsch--Gordan rule, we can decompose the domain of $\delta$ as a direct sum 
 \[ 
 \mathfrak p\otimes [ (s+k-2) \otimes \tau_k] \cong  (s+k-3) \otimes V_M \otimes \tau_k \oplus  (s+k-1) \otimes V_M \otimes \tau_k. 
 \] 
Clearly the surjection $\delta$ vanishes on the first summand. Hence $\delta= \delta \circ \pi$ where $\pi$ is the projection on the second summand. 
 Thus, to prove the theorem, we need to show that the composition 
\[ 
 \mathfrak p_2\otimes (s+k-2)\otimes \tau_k \rightarrow  \mathfrak p\otimes  (s+k-2) \otimes \tau_k  \rightarrow (s+k-1) \otimes V_M \otimes \tau_k, 
\] 
where the first map is the natural inclusion and the second map the projection $\pi$, is a surjection. 
Note that the factor $\tau_k$ plays no role and we can remove it. Next, 
substitute $ \mathfrak p_2=(3)\otimes (1)$, $ \mathfrak p=(1)\otimes V_M$ and $V_M=(2)\otimes (1)$  where the last factor $(1)$ in both $\mathfrak p_2$ and $V_M$
is a representation of $\SU_l(2)$. We  can remove this factor in all three terms, as well, 
 arriving to a manageable  composition
\[ 
(3)\otimes (s+k-2) \rightarrow (1)\otimes (2) \otimes (s+k-2) \rightarrow (2)\otimes (s+k-1).  
\] 
The surjectivity of the above composition is a subject of the following lemma, which completes the proof of the theorem, and explains the condition $s\geq 4$. 
\end{proof}
\begin{lemma} The composition 
\[ 
f:  (3)\otimes (n) \rightarrow (2)\otimes (1) \otimes (n) \rightarrow (2)\otimes (n+1), 
\] 
obtained by the inclusion  $(3) \subset (2)\otimes (1)$ and the projection $(1)\otimes(n)$ on $(n+1)$, is surjective if $n\geq 2$. 
\end{lemma}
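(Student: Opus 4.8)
The statement is about $SU(2)$-representations, so I would work entirely with the representation ring of $\SU(2)$ (equivalently, with highest weights, or with characters in a variable $q$ where $(m)$ has character $q^m + q^{m-2} + \cdots + q^{-m}$). The composite $f$ factors through the $(2)$-isotypic component of $(3)\otimes(n)$: indeed the first map lands in $(3)\otimes(n)$, and composing with the projection $(1)\otimes(n)\twoheadrightarrow(n+1)$ and then remembering the target is $(2)\otimes(n+1)$ means $f$ is really a map between multiplicity spaces. By Schur's lemma it suffices to check that $f$ is \emph{nonzero} on each irreducible summand of the target $(2)\otimes(n+1)\cong(n-1)\oplus(n+1)\oplus(n+3)$ that also appears in the source $(3)\otimes(n)\cong(n-3)\oplus(n-1)\oplus(n+1)\oplus(n+3)$ (here $n\geq 2$ guarantees $(n-3)$ and $(n-1)$ make sense, possibly being the zero or trivial representation, but in any case the summands $(n-1),(n+1),(n+3)$ of the target all occur in the source). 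Since each of these three target summands occurs with multiplicity one in both source and target, the only thing to rule out is that one of these three Schur components of $f$ vanishes identically.

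**Key steps.** First I would set up the highest-weight-vector computation: let $x_+ \in (3)$ be a highest weight vector (weight $3$), $v_k \in (n)$ the weight-$k$ vectors, and track where the extremal weight vectors of each target summand go. For the top summand $(n+3)$: its highest weight vector in $(3)\otimes(n)$ is $x_{+3}\otimes v_n$; I chase it through $(3)\hookrightarrow(2)\otimes(1)$ — writing $x_{+3} = u_{+2}\otimes w_{+1}$ up to scalar — and then through the projection $(1)\otimes(n)\to(n+1)$ applied to $w_{+1}\otimes v_n$, which is manifestly nonzero since $w_{+1}\otimes v_n$ is itself a highest weight vector. So the $(n+3)$-component is nonzero. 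For $(n+1)$ and $(n-1)$ I would write down the (unique up to scalar) highest weight vectors explicitly as combinations $\sum c_j\, x_{3-2j}\otimes v_{n-1+2\cdot(\cdot)}$, apply the embedding $(3)\subset(2)\otimes(1)$ using the known Clebsch–Gordan coefficients, then apply the projection onto $(n+1)$ (resp. $(n-1)$), and read off that the resulting coefficient is a nonzero rational function of $n$ — with the point being that for $n\geq 2$ these denominators/numerators do not vanish, whereas for $n=1$ the $(n-1)=(0)$ summand forces a degeneracy (this is what the lemma's hypothesis $n\geq 2$, and hence $s\geq 4$ in the theorem, is really about).

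**The main obstacle.** The genuine work — and the only place something could go wrong — is the explicit Clebsch–Gordan bookkeeping for the $(n-1)$-summand: one must confirm that the coefficient coming out of the composite is a nonzero scalar (a rational function of $n$ with no zero at integer $n\geq 2$), rather than something that accidentally cancels. I expect this to reduce to verifying that a small $2\times2$ or $3\times3$ determinant of Clebsch–Gordan coefficients is nonvanishing; this is routine but is the computational heart of the argument. An alternative, possibly cleaner, route avoiding coordinates: note the embedding $(3)\subset(2)\otimes(1)$ is dual (via the invariant pairing) to the projection $(2)\otimes(1)\to(3)$, and the whole composite $f$ is adjoint to a similarly-built map $(2)\otimes(n+1)\to(3)\otimes(n)$; one could instead show \emph{that} map is injective, or exhibit a single explicit element of $(3)\otimes(n)$ (a ``Cartan product'' type tensor) whose image under $f$ visibly spans each needed summand. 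I would first try the character/Schur-lemma reduction to three multiplicity-one components, then the brute-force highest-weight-vector check for whichever of the three is least obvious.
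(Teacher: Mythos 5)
Your Schur-lemma reduction is sound and is a genuinely different route from the paper's. Both the source $(3)\otimes(n)$ and the target $(2)\otimes(n+1)\cong(n+3)\oplus(n+1)\oplus(n-1)$ are multiplicity free, every target summand occurs in the source once $n\geq 2$, and so surjectivity of $f$ is equivalent to the nonvanishing of three scalars; your highest-weight-vector check for the $(n+3)$ component is correct. The paper avoids this decomposition entirely: it realizes $(n)$ as degree-$n$ polynomials in $x,y$, writes out the four explicit vectors spanning $(3)$ inside $(2)\otimes(1)$ (namely $y^2\otimes y$, $2xy\otimes y+y^2\otimes x$, $2xy\otimes x+x^2\otimes y$, $x^2\otimes x$), takes the projection to be plain multiplication of polynomials, and then shows by direct manipulation that every basis vector $x^2\otimes q$, $xy\otimes q$, $y^2\otimes q$ of the target lies in the image. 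What the paper's method buys is that no Clebsch--Gordan coefficients ever appear and the role of $n\geq 2$ is completely transparent (one needs a monomial $x^ay^b$ of degree $n+1\geq 3$ with $a\geq 2$ or $b\geq 2$); what your method buys is a conceptual reduction to three one-dimensional checks and a clean explanation of why multiplicity considerations alone already kill the case $n\leq 1$.

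That said, as written your proposal is not yet a proof: the nonvanishing of the $(n+1)$ and $(n-1)$ components is exactly the content of the lemma, and you only assert that the relevant Clebsch--Gordan determinant ``should'' be a nonzero rational function of $n$ for $n\geq 2$. A priori one of those scalars could vanish at some integer, and nothing in your outline rules this out. You must either carry out that computation (it is short in the polynomial model: the lowering-operator expressions for the highest weight vectors of $(n+1)$ and $(n-1)$ in $(3)\otimes(n)$ have two and three terms respectively) or replace it by something like the paper's spanning-set argument. Until one of the two is done, the hypothesis $n\geq 2$ has not actually been used where it matters.
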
 
\begin{proof}  We realize $(n)$ as the space of homogeneous, degree $n$,  polynomials in $x$ and $y$.  Then $(3) \subset (2)\otimes (1)$ is spanned by 
\[ 
y^2\otimes y, \,  2xy \otimes y + y^2 \otimes x, \, 2xy \otimes x + x^2 \otimes y \,  , x^2\otimes x. 
\] 
The projection map $(1)\otimes (n) \rightarrow (n+1)$ is given by the plain multiplication of polynoimials of degree 1 and n.  In order to show that 
$f$ is surjective, we shall argue that in the image of $f$ we get $x^2\otimes q$, $xy\otimes q$ and $y^2\otimes q$ where $q$ 
is any monomial of degree $n+1$. Let $p$ be a monomial of degree $n$. Then 
\[ 
f(x^2 \otimes x \otimes p)=  x^2 \otimes xp \text{ and } f(y^2 \otimes y \otimes p) = y^2\otimes yp. 
\] 
Hence the image of $f$ contains all pure tensors $x^2\otimes q$ and $y^2\otimes q$ for $q\neq y^{n+1}$ and $x^{n+1}$, respectively. 
Let $q=x^ay^b \in (n+1)$ be such that  $a,b\geq 1$. Since $a+b=n+1\geq 3$, either $a$ or $b$ is greater than or equal to $2$.  Assume $a$.  Let 
$p=x^{a-1} y^b$. Then 
\[ 
f((2xy \otimes x + x^2 \otimes y) \otimes p)=  2xy \otimes xp + x^2\otimes yp =  2xy \otimes q + x^2\otimes yp
\] 
is in the image. Since $yp\neq y^{n+1}$, the second summand is in the image of $f$. 
 It follows that $xy\otimes q$ is in the image  of $f$. A similar argument  works if $b\geq 2$. Hence $xy\otimes q$ is in the image of $f$ for any $q$.  Finally, 
 \[ 
 f((2xy \otimes x + x^2 \otimes y) \otimes y^n)=  2xy \otimes xy^n + x^2\otimes y^{n+1}  
 \] 
 hence $x^2\otimes y^{n+1}$ is in the image of $f$. Similarly for $y^2\otimes x^{n+1}$. 
\end{proof}

\section{$D_4$ dual pairs and correspondence in $E_6$}  \label{S:D4} 

In order to determine the correspondence for the dual pairs $K\times B_3$ we shall use another family of dual pairs  $K_1\times D_4$ 
in a see-saw position: 
\begin{equation} \label{eqseesaw_0}
\arraycolsep=1.5pt\def\arraystretch{1.35}
\begin{array}{ccc}
 K & & \Spin(4,4)\\
| & \bigtimes & | \\
 K_1 & & \Spin(4,3).  
\end{array}
\end{equation} 

The family of dual pairs $K_1\times D_4$ was considered by Loke in his thesis \cite{thesis}, and the results were published in \cite{p33}, however 
that paper does not include the dual pair in $E_{6,4}$. The purpose of this section is to fill that gap, and explain the results that we shall need. 

\subsection{The minimal representation}
The maximal compact  subgroup of the quaternionic  adjoint group $\rE_{6,4}$ is $(\SU_0(2) \times \SU(6)/\mu_3)/\mu_2$.
The group of automorphisms of $\rE_{6,4}$ is  $\rE_{6,4} \rtimes \langle \iota \rangle$ where $\iota$ is an outer automorphism of order $2$. 
We pick $\iota$ so that  it commutes with $\SU_0(2)$ and acts as on $\SU(6)$ by the complex conjugation.

The minimal representation $\Vmin$ of $\rE_{6,4}$ is the quaternionic representation $\sigma(G, \bbC[4])$. Its $\SU_0(2)\times \SU(6)$-type decomposition is 
\begin{equation} \label{eqKtypesofVmin}
\Vmin = \bigoplus_{k\geq 0} (k+2) \otimes (k\varpi_3)
\end{equation}
where $\varpi_3=(1,1,1,-1,-1,-1)/2$ is the third fundamental weight for $\SU(6)$.  The minimal representation can be 
extended to $\rE_{6,4} \rtimes \langle \iota \rangle$  in two ways. We fix the one so that $\iota$ acts trivially on the minimal type. 

\subsection{$D_4$ dual pair}  Since $K_1$ is a compact subgroup we will specify the dual pair $K_1 \times D_4$ by 
describing how $K_1$ sits in the maximal compact subgroup of the adjoint 
$E_{6,4}$ which is  isomorphic to $(\SU_0(2) \times \SU(6)/\mu_3)/\mu_2$.  Using this identification, we can identify $K_1$ with the two-dimensional torus 
\[
T = \{ {\mathrm{diag}}(x,x,y,y,z,z) : xyz = 1\} \in \SU(6)/\mu_3.
\]
Let $I$ denote the $2\times 2$ identity matrix.  We highlight three one-parameter subgroups in $T$:  
\[ 
\left[ \begin{array}{ccc} 
x^{2/3} I   &  &  \\
 & x^{-1/3} I &  \\
 & & x^{-1/3} I
\end{array}\right], 
\left[ \begin{array}{ccc} 
y^{-1/3} I   &  &  \\
 & y^{2/3} I &  \\
 & & y^{-1/3} I
\end{array}\right], 
\left[ \begin{array}{ccc} 
z^{-1/3} I   &  &  \\
 & z^{-1/3} I &  \\
 & & z^{2/3} I
\end{array}\right]. 
\] 
The fractional powers make sense since these matrices represent elements of $\SU(6)/\mu_3$. Let $\chi$ be a character of $T$. The restriction of $\chi$ to 
the three one-parameter groups is $x^a$, $y^b$ and $z^c$, respectively, for some integers $a$, $b$ and $c$ such that $a+b+c=0$.  Moreover, 
\[
\chi(\mathrm{diag}(x,x,y,y,z,z))  = x^a y^b z^c.
\] 
Thus characters of $T$ correspond to such triples of integers. Recall that we also have an outer automorphism $\iota$ of $E_{6,4}$ which acts on 
the factor $\SU(6)/\mu_3$ by complex conjugation. Thus, if 
 $\chi$ corresponds to $(a,b,c)$ then $\chi^{\iota}$, the conjugate  of $\chi$ by $\iota$, corresponds to $(-a,-b,-c)$.   
 Hence $\iota$ fixes $\chi$ if and only if $\chi=1$, the trivial character of $T$. 
It is clear now that all irreducible representations of $\tilde T= T\rtimes \langle \iota\rangle$ are 2-dimensional, except the trivial representation, and a non-trivial character $\epsilon$, trivial on $T$.  

The centralizer of $\tilde T$ in $(\SU_0(2) \times \SU(6)/\mu_3)/\mu_2$  is
\[ 
(\SU_0(2) \times \SU(2)\times \SU(2) \times \SU(2))/\mu_2 
\] 
 where the last three $\SU(2)$  embed  in $\SU(6)$ in a block diagonal fashion:
\[ 
(g,h,e) \mapsto 
\left[ \begin{array}{ccc} 
g   &  &  \\
 & h &  \\
 & & e
\end{array}\right]. 
\]
This is the maximal compact subgroup of $D_4\cong \Spin(4,4)$. 

Let $s\geq 2$.  
Recall that  $M\cong \SU(2)\times \SU(2) \times \SU(2)$ for $\Spin(4,4)$ and we have 
 quaternionic representations $\bfA(\Spin(4,4)), (\alpha,\beta,\gamma)[s])$,  with the minimal type $(s-2, \alpha,\beta,\gamma)$.  
 By Theorem \ref{T:generation}, 
this module also has a unique irreducible quotient module denoted by $\sigma(\Spin(4,4)), (\alpha,\beta,\gamma)[s])$, with the same minimal type.  
 
\subsection{Correspondence} 
Suppose restricting $\Vmin$ to the subgroup $\Spin(4,4) \cdot T $ decomposes as
\[
\Vmin = \bigoplus_{a + b + c = 0} \Theta(a,b,c) \otimes \chi(a,b,c).
\]
The next result is taken from \cite[Theorem 5.3.3]{thesis}, except $\Theta^-$ was missed there. A corrected statement is \cite[Theorem 6]{GLPS}.  
\begin{thm} \label{T161} We have 
\begin{enumerate}[(i)]  
\item $\Theta(a,b,c) \cong \Theta(-a,-b,-c)$.

\item Assume  $a \geq 0$, $b \geq 0$, $c < 0$. Then 
\begin{align*}
\Theta(a,b,c) & = \sigma(\Spin(4,4), (b,a,0)[4 + a + b]) \\
\Theta(c,a,b) & = \sigma(\Spin(4,4), (0,b,a)[4 + a + b]) \\
\Theta(b,c,a) & = \sigma(\Spin(4,4), (a,0,b)[4 + a + b]). 
\end{align*}

\item Split  $\Theta(0,0,0)=\Theta^+ \oplus \Theta^-$ by the action of $\iota$ on it. Then 
\[ 
\Theta^+ =  \sigma(\Spin(4,4), (0,0,0)[4]) \text{ and } \Theta^- = \sigma(\Spin(4,4), (0,0,0)[6]).  
\] 
\end{enumerate} 
\end{thm}

\section{The $B_3$ dual pair and correspondence in $E_6$}  \label{S:E6} 

\subsection{The $\Spin(4,3)$ dual pair}  \label{S24}
The group $\rE_{6,4}\rtimes \langle \tau \rangle$ contains a see-saw of dual pairs 

\begin{equation} \label{eqseesaw_1}
\arraycolsep=1.5pt\def\arraystretch{1.35}
\begin{array}{ccc}
 \tilde \rU(2) & & \Spin(4,4)\\
| & \bigtimes & | \\
 \tilde T & & \Spin(4,3).  
\end{array}
\end{equation} 
The embedding $\Spin(4,3) \subset \Spin(4,4)$ is determined by the embedding of 
the corresponding maximal compact subgroups which we now proceed to describe. 
 The maximal compact subgroup of $\Spin(4,3)$, and its embedding into the maximal compact subgroup of $\Spin(4,4)$ is 
\[ 
(\SU_0(2) \times \SU(2) \times \Spin(3))/\mu_2  \subset (\SU_0(2) \times \SU(2)\times \SU(2) \times \SU(2))/\mu_2 
\] 
where the embedding on the first two factors is the identity and $\Spin(3)$ embeds into the last two $\SU(2)$ diagonally.  The centralizer of the maximal compact of 
$\Spin(4,3)$ in $K$ is $\rU(2)$. The subgroup $\SU(2)$  of  $\rU(2)$ embeds as 
\[ 
\left[ \begin{array}{cc} 
  x  &  y \\
  z  & w 
\end{array}\right]
 \mapsto 
\left[ \begin{array}{ccc} 
I  &  &  \\
 & xI  &  yI  \\
 & zI  & wI 
\end{array}\right]. 
\] 
and the center as 
\[ 
\left[ \begin{array}{cc} 
  x  &    \\
    &  x 
\end{array}\right]
 \mapsto 
\left[ \begin{array}{ccc} 
x^{-2/3}I  &  &  \\
 & x^{1/3}I  &    \\
 &  & x^{1/3}I
\end{array}\right]. 
\] 

\begin{prop} \label{P:U(2)_branching} 
The restriction of the irreducible representation of $\rU(2)$ with the highest weight $(m,n)$, $m\geq n$, to $T$ is a direct sum of  $m-n+1$ 
characters corresponding to the triples 
\[ 
(a,b,c)=(-m-n, m,n), (-m-n, m-1, n+1), \ldots , (-m-n, n,m). 
\] 
\end{prop}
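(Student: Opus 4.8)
The plan is a direct computation on the diagonal torus of $\rU(2)$. The point is that $T$ is a maximal torus of $\rU(2)$, so restricting a $\rU(2)$-representation to $T$ merely means reading its torus weights in the ``triple'' coordinates $(a,b,c)$, with $a+b+c=0$, on $\widehat T$; everything reduces to translating between the usual weight coordinates $z_1^p z_2^q$ of $\rU(2)$ and those triples.

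First I would make the embedding $T\hookrightarrow\SU(6)/\mu_3$ explicit. Writing $\mathrm{diag}(z_1,z_2)\in\rU(2)$ as the product of the central element with parameter $x=(z_1 z_2)^{1/2}$ and of $\mathrm{diag}(t,t^{-1})\in\SU(2)$ with $t=(z_1/z_2)^{1/2}$, and substituting into the two embedding formulas of Subsection~\ref{S24}, one finds that $\mathrm{diag}(z_1,z_2)$ maps to $\mathrm{diag}(u,u,v,v,w,w)$ with
\[
u=(z_1 z_2)^{-1/3},\qquad v=z_1^{2/3}z_2^{-1/3},\qquad w=z_1^{-1/3}z_2^{2/3},
\]
and indeed $uvw=1$. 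In particular the diagonal torus of $\rU(2)$ maps isomorphically onto $T$.

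Next I would match the two coordinate systems on characters. Using $a+b+c=0$, evaluation of $\chi(a,b,c)$ on the element above collapses to
\[
\chi(a,b,c)\bigl(\mathrm{diag}(z_1,z_2)\bigr)=u^a v^b w^c=z_1^b z_2^c .
\]
Thus the character of $T$ labelled by $(a,b,c)$ is exactly the $\rU(2)$-weight $z_1^b z_2^c$, and conversely the weight $z_1^p z_2^q$ is the character $\chi(-p-q,\,p,\,q)$; in particular the restriction map on character lattices is the injection $(a,b,c)\mapsto(b,c)$, so distinct weights stay distinct.

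Finally I would invoke the classical weight decomposition: the irreducible $\rU(2)$-representation of highest weight $(m,n)$ is $\Sym^{m-n}(\bbC^2)\otimes\det^n$, whose torus weights are $z_1^{m-j}z_2^{n+j}$ for $0\le j\le m-n$, each with multiplicity one. Translating through the dictionary above, these become the characters of $T$ attached to $(-m-n,\,m,\,n),\ (-m-n,\,m-1,\,n+1),\ \dots,\ (-m-n,\,n,\,m)$, which is exactly the asserted list of $m-n+1$ triples. There is no conceptual obstacle here; the only thing that needs care is the bookkeeping with the fractional exponents produced by the $\mu_2$ and $\mu_3$ quotients, together with pinning down the identification of $T$ with the diagonal torus of $\rU(2)$ so that the weight translation is unambiguous.
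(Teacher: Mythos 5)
Your proof is correct and follows essentially the same route as the paper: both arguments reduce to reading off torus weights through the explicit embedding of Subsection \ref{S24}, the paper by separately matching the central character (giving $a=-m-n$, $b+c=m+n$) and the $\SU(2)$-weights (giving $b-c$), you by assembling the full dictionary $\chi(a,b,c)\leftrightarrow z_1^{b}z_2^{c}$ in one step. The computation of $u,v,w$ and the resulting identification check out.
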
 
\begin{proof}  The center of $\rU(2)$ acts by the weight $m+n$, so it is clear tha $a=-m-n$.  Hence $b+c=m+n$.   
The $\SU(2)$ weights are $m-n, m-n-2, \ldots , n-m$.  Since 
\[ 
\left[ \begin{array}{ccc} 
x^{-1/3} I   &  &  \\
 & x^{2/3} I &  \\
 & & x^{-1/3} I
\end{array}\right] \cdot 
\left[ \begin{array}{ccc} 
x^{1/3} I   &  &  \\
 & x^{1/3} I &  \\
 & & x^{-2/3} I
\end{array}\right] = 
\left[ \begin{array}{ccc} 
 I   &  &  \\
 & x  I &  \\
 & & x^{-1} I 
 \end{array}\right] 
\] 
it follows that 
\[ 
b-c= m-n, m-n-2, \ldots , n-m. 
\] 
Combining with $m+n=b+c$, we solve for $(b,c)$ as claimed. 
\end{proof}

\subsection{Correspondence} 
Suppose restricting $\Vmin$ to $\Spin(4,3) \cdot \rU(2) $ decomposes as
\[ 
\Vmin  = \bigoplus_{a \geq b} \Theta(a,b) \otimes (a,b) 
\] 
where $(a,b)$ denotes the irreducible representation of $\rU(2) $ with the highest weight $a\geq b$.  
According to \cite[Eqn. (7.1)]{LiDuke}, the nfinitesimal character of $\Theta(a,b)$ is 
\begin{equation} \label{eqinfThetaa2a3}
\lambda(a,b) = \dfrac{1}{2}(a+b+1,a+b-1,a-b+1).
\end{equation}
Here we use the positive root system of $\rB_3$  in $\mathbb R^3$ such that the 
simple roots are $\alpha_1=(1,-1,0)$, $\alpha_2=(0,1,-1)$ and $\alpha_3=(0,0,1)$. The negative highest 
root is $\alpha_0= (-1,-1,0)$. The half sum of positive roots is $\rho=\frac{1}{2}(5,3,1)$. 

\begin{lemma} \label{L321}
The theta lift $\Theta(a,b)$ is a direct sum of finitely many irreducible unitarizable representations.
\end{lemma}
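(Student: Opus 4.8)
The plan is to deduce this from the fact that $\Vmin$ is a unitary representation of $\rE_{6,4}\rtimes\langle\tau\rangle$ together with the compactness of $\rU(2)$. First I would observe that $\Vmin$, being the minimal representation of Gross--Wallach, carries an invariant positive-definite Hermitian form (at the level of the underlying $(\mathfrak g,K)$-module, the quaternionic discrete series is unitarizable). Restricting this form to the dual pair $\Spin(4,3)\cdot\rU(2)$, the decomposition $\Vmin=\bigoplus_{a\geq b}\Theta(a,b)\otimes(a,b)$ is an orthogonal direct sum, since the isotypic components for the compact group $\rU(2)$ are mutually orthogonal and each summand $(a,b)$ is irreducible. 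Hence each $\Theta(a,b)$ inherits a positive-definite invariant Hermitian form, i.e.\ it is a unitarizable $(\mathfrak{so}(4,3),\Spin(4,3)\cap K)$-module; in particular it is semisimple, a (possibly infinite) Hilbert direct sum of irreducible unitarizable pieces.

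Next I would upgrade ``possibly infinite'' to ``finite.'' For this I use the $K$-type structure established in the earlier sections. By \eqref{eqKtypesofVmin}, $\Vmin=\bigoplus_{n\geq0}(n+2)\otimes(n,n,n,0,0,0)$ as $\SU_0(2)\times\SU(6)$-modules, and by Proposition \ref{P:U(2)_branching} each $\SU(6)$-type $(n,n,n,0,0,0)$ contributes only finitely many $\rU(2)$-types $(a,b)$, each with bounded multiplicity. Dually, a fixed $\rU(2)$-type $(a,b)$ can occur inside $(n,n,n,0,0,0)$ only for finitely many $n$ — indeed the central character of $\rU(2)$ is $a=-m-n$ in the notation there, which pins down $n$ exactly in terms of $a,b$, so each $(a,b)$ meets only one or two of the $\SU(6)$-isotypic blocks. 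Therefore the multiplicity space $\Theta(a,b)$, as a module for the maximal compact $K'=(\SU_0(2)\times\SU(2)\times\Spin(3))/\mu_2$ of $\Spin(4,3)$, has each $K'$-type occurring with \emph{finite} multiplicity, and moreover $\Theta(a,b)$ has infinitesimal character $\lambda(a,b)$ by \eqref{eqinfThetaa2a3}. A unitarizable Harish-Chandra module with a fixed infinitesimal character and finite $K'$-multiplicities is a finite direct sum of irreducibles: semisimplicity gives the direct sum decomposition, Harish-Chandra's theorem bounds the number of irreducible constituents with a given infinitesimal character and a given bound on $K'$-types, and finiteness of multiplicities (together with admissibility, which follows from $\SU_0(2)$-admissibility of $\bfA$, already noted after \eqref{E:K_types}) forces the sum to terminate.

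I expect the main obstacle to be the bookkeeping that makes the ``finite length'' conclusion rigorous rather than the unitarity, which is essentially automatic from restriction of the invariant form. The cleanest route is: (1) cite unitarity of $\Vmin$ and orthogonality of $\rU(2)$-isotypic components to get that each $\Theta(a,b)$ is unitarizable, hence completely reducible; (2) use the explicit $K$-type formula \eqref{eqKtypesofVmin} plus Proposition \ref{P:U(2)_branching} to see that only finitely many $n$ contribute to a given $(a,b)$, so $\Theta(a,b)$ is an admissible $(\mathfrak{so}(4,3),K')$-module; (3) invoke the standard fact that an admissible module has finite length, so the completely reducible $\Theta(a,b)$ is a finite direct sum of irreducible unitarizable modules. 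One should be slightly careful at step (2) about the $\mu_2$-quotients and about the precise translation between the $\SU(6)$-highest weight $(n,n,n,0,0,0)$ and the $\rU(2)$-weights $(a,b)$ under the embedding described in \S\ref{S24}; but this is exactly the computation underlying Proposition \ref{P:U(2)_branching}, so no genuinely new input is needed.
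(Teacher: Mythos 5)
There is a genuine gap in your admissibility step. You claim that ``a fixed $\rU(2)$-type $(a,b)$ can occur inside $(n,n,n,0,0,0)$ only for finitely many $n$,'' and you invoke the identity $a=-m-n$ from Proposition \ref{P:U(2)_branching} to justify it. This misreads that proposition: there, $(m,n)$ is the highest weight of a $\rU(2)$-representation and $(a,b,c)$ labels a character of $T\subset\rU(2)$ to which it is being restricted, whereas here $n$ indexes the $\SU(6)$-types $(n,n,n,0,0,0)$ in (\ref{eqKtypesofVmin}); the two ``$n$''s have nothing to do with each other. More importantly, the claim itself is false. The center of $\rU(2)$ does not act by a single scalar on the $\SU(6)$-module $(n,n,n,0,0,0)$, so a fixed $\rU(2)$-type $(a,b)$ typically appears in $(n,n,n,0,0,0)|_{\rU(2)}$ for infinitely many $n$. (If your claim held, $\Theta(a,b)$ would be finite-dimensional, which it is not: $\Theta(a,b)$ is generically an infinite-dimensional quaternionic representation of $\Spin(4,3)$.)

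The correct and much simpler observation, which is what the paper uses, is that $\Vmin$ is $\SU_0(2)$-admissible because the $\SU_0(2)$-factor $(n+2)$ in the $n$-th summand of (\ref{eqKtypesofVmin}) determines $n$ uniquely. Since $\SU_0(2)\subseteq K'$, the submodule $\Theta(a,b)\subseteq\Vmin$ is then $K'$-admissible automatically. Combined with your (correct) observation that $\Theta(a,b)$ is unitarizable by restriction of the invariant form, and with the infinitesimal character (\ref{eqinfThetaa2a3}), this gives finite length and complete reducibility, hence a finite direct sum of irreducible unitarizable modules. So your strategy --- unitarity plus admissibility plus infinitesimal character --- is the paper's strategy; the error is entirely localized in the way you tried to establish admissibility via the $\rU(2)$-branching rather than via $\SU_0(2)$-admissibility.
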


\begin{proof}
We have $\Theta(a,b) \subseteq \Vmin$ which is $\SU_0(2)$-admissible and unitarizable. 
Hence~$\Theta(a,b)$ is an admissible and unitarizable module with infinitesimal character $\lambda(a,b)$. 
This shows that $\Theta(a,b)$ has finite length and unitarizable. 
\end{proof} 

Let $s\geq 2$. The group $\Spin(4,3)$ has quaternionic  representations $\bfA(\Spin(4,3), (m,n)[s])$, where $(m,n)$ is a highest weight for $M=\SU(2)\times \Spin(3)$.  
The two factors of $M$  correspond to the roots $\alpha_1$ and $\alpha_3$, respectively. Specializing (\ref{E:inf_A_general}) to $\Spin(4,3)$, 
 the infinitesimal character of $\bfA(\Spin(4,3), (m,n)[s])$ is 
\begin{equation} \label{inf_A}
\frac{m}{2}\alpha_1 + \frac{n}{2}\alpha_3 + \rho + \frac{s}{2} \alpha_0  =\frac{1}{2}( m-s+5,  -m-s+3, n+1). 
\end{equation}
 We have the following theorem which describes the correspondence between representations of $\tilde \rU(2)$ and $\Spin(4,3)$.  We note that representations of 
 $\rU(2)$ are $\iota$-invariant precisely when the highest weight is $(k,-k)$. Such representations can be extended to $\tilde \rU(2)$ in two ways. We distinguish the two 
 by the action of $\iota$ on the $0$-weight space, and denote them by $(k,-k)^+$ and $(k,-k)^-$.  
\begin{thm} \label{Tmain} Let $(a,b)$ be a highest weight for $\rU(2)$.  
We have: 
\begin{enumerate}[(i)]
\item $\Theta(a,b) \cong \Theta(-b,-a)$.
\item If $a\geq b>0$ then
\[
\Theta(a,b) = \sigma(\Spin(4,3) ,  (0,a-b)[4+a+b]).
\]

\item If $a>  0\geq b$ and $a+b>0$, then 
\[
\Theta(a,b) = \sigma(\Spin(4,3),  (-b, a+b)[4+a]).
\]

\item Let $a\geq 0$. Decompose $\Theta(a,-a)=\Theta(a,-a)^+ \oplus \Theta(a,-a)^-$ by the action of $\iota$. Then 

\begin{align*}
\Theta(a,-a)^+& \subseteq  \sigma(\Spin(4,3), (a,0)[4+a]) \\
\Theta(a,-a)^- & \subseteq  \sigma(\Spin(4,3), (a-1,0)[5+a]).   
\end{align*}
where for $a=0$, the last identity is interpreted as $\Theta(0,0)^-=0$. 

\end{enumerate}
\end{thm}

The proof of the theorem will occupy the rest of this section. The first part is clear (the action of $\tau$).  Furthermore, 
 it is easy to se that $\Theta(a,b)$ is non-zero for any pair $(a,b)$.  However, in the special case 
 $\Theta(a,-a)=\Theta(a,-a)^+ \oplus \Theta(a,-a)^-$ it is considerable trickier to see which of the two summands is non-zero. 
 With some effort it is possible to see that only $\Theta(0,0)^-$ vanishes, and the inclusions in the part (iv) are in fact isomorphisms. 
  Since non-vanishing of theta lifts is not logically 
 necessary for the main goals of the paper, we omit the proof.

\subsection{Proof of (ii)} 
Recall, from Proposition \ref{P:U(2)_branching}, that the restriction of $(a,b)$ 
contains the character corresponding to the triple $(-a-b,a,b)$. 
 By \Cref{T161}(ii)  we have
\[
\Theta(a,b) \subseteq \Theta(-a-b,a,b) = \sigma(\Spin(4,4), (0,b,a)[4 + a + b]).
\]
Recall that $\sigma(\Spin(4,4), (0,b,a)[4 + a + b])$ is a quotient of $\bfA(\Spin(4,4), (0,b,a)[4 + a + b])$. \\
By Section \ref{SS:restricting_A}, this module has an increasing filtration $F_1 \subseteq F_2 \subseteq \ldots$ of Harish-Chandra modules of $\Spin(4,3)$ such that
\[ 
F_{m}/F_{m-1}  =  \sum_{j=0}^b \bfA(\Spin(4,3), (m,a-b + 2j)[4 + a + b + m]).
\]  
We denote the $j$-th summand module on the right hand side of the above equation by~$\bfA(m,j)$. By (\ref{inf_A})
its infinitesimal character is
\begin{equation} \label{eqinfThetaa2a3j}
\lambda(a,b,m,j) = \dfrac{1}{2}(a+b+1+2m,a+b-1,a-b + 1 + 2j).
\end{equation}
Suppose $\Theta(a,b)$ shares an irreducible component with $\bfA(m,j)$.
Then infinitesimal characters $\lambda(a,b)$ in \eqref{eqinfThetaa2a3} and $\lambda(a,b,m,j)$ in \eqref{eqinfThetaa2a3j} are equal.
We compute 
\[
0 = ||\lambda(a,b,m,j)||^2 - ||\lambda(a,b)||^2 = (a+b+m+1)m + \frac{1}{4}(a-b+1+j)j
\]
Since $a \geq  b>0$, $m \geq 0$ and $j \geq 0$, the two terms on the right are non-negative.
The sum of these two terms are zero so each term is zero. We get $m = 0$ and $j = 0$. Hence $\Theta(a,b)$ is a quotient of 
\[
V_1= \bfA(0,0) = \bfA(\Spin(4,3),  (0,a-b) [4 + a + b]).
\]
By \Cref{L321}, $\Theta(a,b)$ is a direct sum of irreducible representations. Hence it has to be isomorphic to the unique irreducible quotient of 
$\bfA(\Spin(4,3),  (0,a-b) [4 + a + b])$. 
This proves~(ii). 

\subsection{Case (iii)} 
Let $c=a+b>0$. Hence, we can write $a=c+k$ and $b=-k$, for some integer $k\geq 0$. 
by Proposition \ref{P:U(2)_branching} the representations of $\rU(2)$ with the highest weight $(c+k,-k)$ are precisely those  that  contain the character of $T$ corresponding to $(-c,c,0)$.  Hence 
\[  
\bigoplus_{k \geq 0} \Theta(c+k,-k) = \Theta(-c,c,0)=  \sigma(\Spin(4,4), (0,0,c)[4 + c]).
\] 
By Section \ref{SS:restricting_A} this module has an increasing filtration $F_1 \subseteq F_2 \subseteq \ldots$ of Harish-Chandra modules of $\Spin(4,3)$ such that
\[ 
F_{k}/F_{k-1} = \bfA(\Spin(4,3), (k,c)[4 + c + k]). 
\] 
Observe that the infinitesimal character of $\Theta(c+k,-k)$ and $\bfA(\Spin(4,3), (k,a)[4 + c + k])$ is  
\[ 
\lambda(c+k, -k)= \dfrac{1}{2}(c+1,c-1,c +2k+1).
\] 
As $k$ varies,  these are different. Hence $\Theta(c+k,-k)$ is a quotient of $\bfA(\Spin(4,3), (k,c)[4 + c + k])$, and we argue as in (ii) to prove that 
$\Theta(c+k,-k)$ is isomorphic to $\sigma(\Spin(4,3), (k,c)[4 + c + k])$. Substitution $c=a+b$ and $k=-b$ gives us 
the claimed $\sigma(\Spin(4,3), (-b,a+b)[4 + a])$.

\subsection{Case (iv)} \label{S34} 
This is similar to the previous case,   using 
\[  
\bigoplus_{a\geq 0} \Theta(a,-a)^+ = \Theta(0,0,0)^+=  \sigma(\Spin(4,4), (0,0,0)[4]).
\]  
and 
\[  
\bigoplus_{a \geq 0} \Theta(a,-a)^- = \Theta(0,0,0)^-=  \sigma(\Spin(4,4), (0,0,0)[6]).
\] 
The  $+$ case  is exactly as  the case (ii) however, for the minus case, we have a filtration with quotients 
\[ 
\bfA(\Spin(4,3), (a-1,0)[5+a])
\] 
where $a\geq 1$ and the infinitesimal character of this is the same as the infinitesimal character of $\Theta(a, -a)$. 
This completes the proof of \Cref{Tmain}.

\section{$B_3$ dual pair and correspondence in $E_7$}  \label{S:E7} 

\subsection{The minimal representation}
The minimal representation $\Vmin$ of  the adjoint $\rE_{7,4}$ is the quaternionic representation $\sigma(\rE_{7,4},\bbC[6])$.
This representation has $\SU_0(2) \times \Spin(12)$-types 
\[
\Vmin = \bigoplus_{k=0}^\infty (k+4) \otimes (k \omega_6)
\]
where $\omega_6 = \frac{1}{2}(1,1,1,1,1,1)$ is the highest weight of a half-spin representation on $\Spin(12)$.

\subsection{Local theta lifts} 
The group $\rE_{7,4}$ contains a see-saw of dual pairs 

\begin{equation} \label{eqseesaw}
\arraycolsep=1.5pt\def\arraystretch{1.35}
\begin{array}{ccc}
\rSp(2)\times  \rSp(1) & & \Spin(4,4)\\
| & \bigtimes & | \\
 \rSp(1)^3 & & \Spin(4,3).  
\end{array}
\end{equation}
We shall not need to know how these dual pairs sit in $\rE_{7,4}$, so we omit that part. The vertical inclusions above are obvious. 
The highest weight of an irreducible representation of $\rSp(2)$ is $(a,b)$ where $a$ and $b$ are integers such that 
$a\geq b\geq 0$ in the usual $\rC_2$ root system realization.  
The highest weight (and the corresponding representation of $\rSp(2)\times\rSp(1)$) will be denoted by $(a,b;c)$, 
Suppose restricting $\Vmin$ (the minimal representation of quaternionic $E_7$) 
 to $\Spin(4,3) \times [\rSp(2) \times \rSp(1)] $ decomposes as
\[ 
\Vmin  = \bigoplus_{(a,b;c)} \Theta((a,b; c))  \otimes (a,b; c)
\] 
According to \cite[Eqn. (3.8)]{LiDuke} $\Theta((a,b;c))$ has infinitesimal character
\begin{equation} \label{eqinfThetc2c1}
\lambda(a,b; c) = \dfrac{1}{2}(a+b+3,a-b+1,c+1).
\end{equation}
(We caution  the reader that Li's result is in terms of the $\rB_2$ root system, as $\rSp(2)\cong \Spin(5)$.)

\begin{thm} With $M=\SU(2)\times \Spin(3)$,  
\begin{itemize} 
\item If $c\leq a-b$ then 
\[ 
 \Theta((a,b; c))\cong \sigma(\Spin(4,3), (b,c)[6+a]),  
\] 

\item If $a-b \leq c\leq a+b$ then 
\[ 
 \Theta((a,b; c))\cong \sigma (\Spin(4,3), \left(\frac{a+b-c}{2}, a-b \right) \left[6+\frac{a+b+c}{2} \right] ),  
\] 
\item If $a+b <c$ then $ \Theta((a,b; c))=0$.  

\end{itemize} 
\end{thm}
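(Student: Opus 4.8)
The plan is to mimic the strategy used for the $E_6$ case (Theorem \ref{Tmain}): restrict the known $\Spin(4,4)$-correspondence along the see-saw \eqref{eqseesaw} to $\Spin(4,3)$, decompose the restriction using the filtration of Section \ref{SS:restricting_A}, and then pin down $\Theta((a,b;c))$ by matching infinitesimal characters. First I would record the relevant $\Spin(4,4)$-correspondence, i.e.\ the analogue of \Cref{T161} computing the restriction of $\Vmin$ (for $E_7$) to $\Spin(4,4)\times[\rSp(2)\times\rSp(1)]$; this should give $\Theta((a,b;c))$ at the $\Spin(4,4)$ level as some $\sigma(\Spin(4,4),(\alpha,\beta,\gamma)[s])$, where $(\alpha,\beta,\gamma)$ and $s$ are read off from $(a,b;c)$, and the infinitesimal character there must agree with $\lambda(a,b;c)$ of \eqref{eqinfThetc2c1}. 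This is the $D_4$ input analogous to what Section \ref{S:D4} provides for $E_6$, and is presumably available from \cite{thesis}/\cite{p33} (or can be derived the same way).

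Next, using Section \ref{SS:restricting_A} with $G'=\Spin(4,3)\subset G=\Spin(4,4)$, I would write down the $\Spin(4,3)$-filtration $F_1\subseteq F_2\subseteq\cdots$ of $\sigma(\Spin(4,4),(\alpha,\beta,\gamma)[s])$ with subquotients a sum of $\bfA(\Spin(4,3),(m,\gamma-\beta+2j)[s+m])$ for $0\le j\le\beta$ (the exact indexing depending on which of the three ``slots'' of the $D_4$ triple $\rSp(1)^3$ the character of $\rSp(1)$ picks out, i.e.\ whether we are in the regime $c\le a-b$ or $a-b\le c\le a+b$). Each such $\bfA$-summand has a computable infinitesimal character via \eqref{inf_A}; I would then impose equality of $\|\cdot\|^2$ with $\|\lambda(a,b;c)\|^2$ exactly as in the proof of \Cref{Tmain}(ii), obtaining a sum of manifestly non-negative terms forced to vanish, hence $m=0$ and $j$ fixed. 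This identifies $\Theta((a,b;c))$ as a quotient of a single $\bfA(\Spin(4,3),\dots)$; \Cref{L321} (applicable since $\Vmin$ for $E_7$ is $\SU_0(2)$-admissible and unitarizable) then upgrades ``quotient'' to ``isomorphic to the unique irreducible quotient $\sigma(\Spin(4,3),\dots)$''. Substituting back and simplifying the parameters should yield precisely $(b,c)[6+a]$ in the first regime and $\left(\tfrac{a+b-c}{2},a-b\right)\left[6+\tfrac{a+b+c}{2}\right]$ in the second; note the parity of $a+b+c$ must be even for the second formula to make sense, which should drop out of the structure of the $D_4$ branching. For the vanishing statement $a+b<c\Rightarrow\Theta((a,b;c))=0$, I would observe that in this range the character of $\rSp(1)$ does not occur in the relevant $D_4$ branching (the analogue of Proposition \ref{P:U(2)_branching} for $\rSp(2)\times\rSp(1)\supset\rSp(1)^3$ has a support constraint $c\le a+b$), so $\Theta((a,b;c))\subseteq\Vmin$ already contributes nothing at the $\Spin(4,4)$ stage.

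I expect the main obstacle to be the bookkeeping in the first step: establishing the precise $\Spin(4,4)\times[\rSp(2)\times\rSp(1)]$ decomposition of $\Vmin$ for $E_7$, including the compact branching $\rSp(2)\times\rSp(1)\downarrow\rSp(1)^3$ that governs which triple $(\alpha,\beta,\gamma)[s]$ and which sub-case each $(a,b;c)$ falls into. Unlike the $E_6$ situation, where the centralizer data ($\tilde T$, $\rU(2)$) and Proposition \ref{P:U(2)_branching} are elementary, here one must correctly match the $\rSp(2)$-highest weight $(a,b)$ in $\rC_2$-coordinates against Li's $\rB_2$-normalized formula \eqref{eqinfThetc2c1}, and track the three chambers $c\le a-b$, $a-b\le c\le a+b$, $c>a+b$ against the Clebsch--Gordan-type decomposition for $\rSp(1)^3$. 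Once that combinatorial dictionary is fixed, the infinitesimal-character matching and the finite-length/unitarity argument proceed essentially verbatim as in Section \ref{S:E6}.
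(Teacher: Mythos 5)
Your overall route agrees with the paper's for the first two bullets: restrict along the $\rSp(1)^3\times\Spin(4,4)$ see-saw, filter the $\Spin(4,4)$-lift by Section~\ref{SS:restricting_A}, and separate the subquotients by comparing \eqref{inf_A} against the infinitesimal character \eqref{eqinfThetc2c1}. However, your justification of the third bullet contains a genuine error. You claim a support constraint $c\le a+b$ in the branching $\rSp(2)\times\rSp(1)\supset\rSp(1)^3$. No such constraint exists: in that see-saw the $\rSp(1)$-factor of $\rSp(2)\times\rSp(1)$ maps identically onto the third copy of $\rSp(1)$ in $\rSp(1)^3$, so the restriction of $(a,b;c)$ is simply $[(a,b)|_{\rSp(1)\times\rSp(1)}]\otimes(c)$, with $c$ passing through untouched. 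Nothing in this branching alone can make the lift vanish.

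The actual source of the bound $c\le a+b$ (and of the parity $a+b+c\equiv 0\pmod 2$ that you correctly flag as necessary for the second formula) is a second see-saw you did not invoke, namely \eqref{eqseesaw1}: $\rSp(3)\times\Spin(4,3)$ dual to $\rSp(2)\times\rSp(1)\times\rG_{2,2}$. The $\rSp(3)$-types occurring in $\Vmin$ all have highest weight $(\alpha,\beta,\gamma)$ with $\alpha=\beta+\gamma$ by \cite{HPS}, and it is the Wallach--Yacobi branching $\rSp(3)\downarrow\rSp(2)\times\rSp(1)$ (Proposition~\ref{P:C_n branching}) applied to those special types that forces $c\le a+b$; this establishes the vanishing (and non-vanishing) before the infinitesimal-character step even begins. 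A smaller imprecision: the matching here is not the ``norm-squared forces $m=j=0$'' pattern of \Cref{Tmain}(ii) that you cite; rather, the see-saw yields $\bigoplus_{k\ge 0}\Theta(d+k,k;c)\cong\Theta(0,d,c)|_{\Spin(4,3)}$, and one matches this infinite family term-by-term against the infinite filtration on the right, the two regimes $c\le a-b$ and $a-b\le c\le a+b$ corresponding to $d\ge c$ and $d\le c$ in Loke's $\Spin(4,4)$ theorem.
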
 
\begin{proof} 
Recall we have another see-saw 
\begin{equation} \label{eqseesaw1}
\arraycolsep=1.5pt\def\arraystretch{1.35}
\begin{array}{ccc}
\rSp(3) & & \Spin(4,3)\\
| & \bigtimes & | \\
\rSp(2)\times\rSp(1) & & \rG_{2,2}
\end{array}
\end{equation}
where $\rG_{2,2}$ denotes the split Lie group of type $\rG_2$. The $\rSp(3)$-types that appear in the minimal
representation have the highest weight $(\alpha,\beta,\gamma)$ such that $\alpha=\beta+\gamma$ \cite{HPS}.  
Now, using the Wallach--Yacobi branching, Proposition \ref{P:C_n branching} , one can check that the branching to $\rSp(2)\times\rSp(1)$ only gives $(a,b;c)$ such that $c\leq a+b$. 
This gives a verification of the third bullet, and non-vanishing in other cases. To give the upper bound of the theta lift, we 
use the see-saw dual pair $\rSp(1)^3 \times \Spin(4,4)$ and the lift in this case obtained by Loke, to figure out~ $\Theta((a,b;c))$. 
To that end we need the following, an easy consequence of Proposition~\ref{P:C_n branching}.  

\begin{lemma} Fix $d \geq 0$. Representations of $\rSp(2)$ that, when restricted to $\rSp(1)\times\rSp(1)$, contain 
$(d,0)$ (or equivalently $(0, d)$) are the representations with the highest weight $(d+k,k)$ where $k=0,1, \ldots$. 
Moreover, the multiplicity of $(0,d)$ is always 1.
\end{lemma} 

Hence, by a see-saw argument, 
\[  
\bigoplus_{k \geq 0} \Theta(d+k,k;c) = \Theta(0,d,c)
\] 
(or $=\Theta(d,0,c)$).  Here $\Theta(0,d,c)$ is the lift to $\Spin(4,4)$ of the representation of $\rSp(1)^3$ of the highest weight 
$(0,d,c)$.  

Assume that $d\geq c$. Then, using \cite[Thm. 12.4.1]{p33},
\[ 
\Theta(d,0,c)=\sigma(\Spin(4,4), (0,c,0)[6+d]), 
\] 
and 
\[ 
\Theta(0,d,c)=\sigma(\Spin(4,4), (c,0,0)[6+d]).
\] 
Since the restriction to $\Spin(4,3)$ is the same, 
this is  saying that $\Spin(3)$ (of $B_3$) sits in the first two $\SU(2)$ in $M$ of $\Spin(4,4)$. By 
Section \ref{SS:restricting_A}
$\bfA(\Spin(4,4), (0,c,0)[6+d])$ (and also $\bfA(\Spin(4,4), (0,c,0)[6+d])$ have a $\Spin(4,3)$-flitration $F_k$ such that 
\[ 
F_k/F_{k-1}\cong \bfA(\Spin(4,3), (k,c)[6+d+k])
\] 
where $(k,c)$ is the highest weight for $M=\SU(2) \times \Spin(3)$.  
By (\ref{inf_A}) the infinitesimal character of $\bfA(\Spin(4,3), (k,c)[6+d+k])$ is 
\[ 
 \dfrac{1}{2}(d+2k+3,d+1,c+1) 
\] 
and this is precisely the infinitesimal character $\lambda(d+k,k; c)$ of $\Theta(d+k,k;c)$ by  (\ref{eqinfThetc2c1}). 
As these characters are all different a $k$ varies, it follows that 
\[ 
\Theta(d+k,k;c)\cong \sigma(\Spin(4,3), (k,c)[6+d+k]), 
\] 
if $d\geq c$.  Now assume $c\geq d$. Then 
\[ 
\Theta(d,0,c)=\sigma(\Spin(4,4), (0,d,0)[6+c])
\] 
and 
\[ 
\Theta(0,d,c)=\sigma(\Spin(4,4), (d,0,0)[6+c]).
\] 
Now $\bfA(\Spin(4,4), (0,d,0)[6+c])$ (and also $\bfA(\Spin(4,4), (0,d,0)[6+c])$ have a $\Spin(4,3)$-flitration $F_k$ such that 
\[ 
F_k/F_{k-1}\cong \bfA(\Spin(4,3), (k,d)[6+c+k]). 
\] 
 The infinitesimal character of this subquotient is (we are  using the Weyl group action, as needed, to pick a nicer representative for comparison)
\begin{equation}\label{eq_one} 
 \dfrac{1}{2}(c+2k+3,d+1,c+1). 
 \end{equation} 
 On the other hand, the infinitesimal character of $\Theta(d+l,l; c)$ is 
 \begin{equation} \label{eq_two}
  \lambda(d+l,l; c) = \dfrac{1}{2}(d+2l+3,d+1,c+1). 
 \end{equation} 
 The infinitesimal character in (\ref{eq_one}) is equal to the one in (\ref{eq_two}) precisely if $c+2k=d+2l$ hence $k=l+(d-c)/2$. 
  This proves the second bullet, and provides an independent verification of the third, since $k$ is non-negative, forcing $l$ to be at least 
  $(c-d)/2$.  
\end{proof}

\section{ $B_3$ dual pair and correspondence in $E_8$}  \label{S:E8} 

The treatment in this case is somewhat different, since there is no correspondence of infinitesimal characters. 
Instead, we shall identify theta lifts by computing their minimal $K$-types.  

\subsection{The minimal representation}
The minimal representation $\Vmin$ of $\rE_{8,4}$ is the quaternionic representation $\sigma(\rE_{8,4},\bbC[10])$.
This representation has $ \SU_0(2) \times \rE_7$-types 
\[
\Vmin = \oplus_{k=0}^\infty (k+8) \otimes (k \varpi_7)
\]
where $\omega_7$ is the highest weight of the irreducible miniscule 56 dimensional representation of~$\rE_7$. 

\subsection{Local theta lifts} \label{S32}
The group $\rE_{8,4}$ contains a see-saw of dual pairs 

\begin{equation} \label{eqseesaw2}
\arraycolsep=1.5pt\def\arraystretch{1.35}
\begin{array}{ccc}
\Spin_9 & & \Spin(4,4)\\
| & \bigtimes & | \\
\Spin_8 & & \Spin(4,3).  
\end{array}
\end{equation}

Let $\lambda$ be a highest weight for $\Spin_8$. Let $(\lambda)$ denote the corresponding irreducible representation of $\Spin_8$. 
Restricting $\Vmin$ to the dual pair $\Spin_8 \times \Spin(4,4)$ gives
\[
\Vmin = \bigoplus_{\lambda} (\lambda) \otimes \Theta(\lambda)
\]
where the sum is over all highest weights $\lambda$ for $\Spin_8$.  The factor
$\Theta(\lambda)$ is a quaternionic representation of $\Spin(4,4)$.  This lift is computed in \cite[Theorem 1.4.1]{p33}.
We extract the relevant information from the theorem that we need later. 

\begin{thm} \label{TSpin8Spin44}
Let $\lambda = (a,b,c,d)$ be a highest weight of $\Spin_8$. Then 
\[
\Theta(\lambda) = (b-c+1) \cdot \bfA(\Spin(4,4), (a-b,c+d,c-d)[10+a+b]).
\]
Here $\bfA(\Spin(4,4), (a-b,c+d,c-d)[10+a+b])$ is the quaternionic discrete series representation of $\Spin(4,4)$ 
with the same infinitesimal character $\lambda + \rho(\rD_4)$ as $(\lambda)$.
\end{thm}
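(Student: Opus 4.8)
The plan is to read this off the $\Spin_8\times\Spin(4,4)$ correspondence computed by Loke in \cite[Theorem 1.4.1]{p33}, translated into the notation $\bfA(\Spin(4,4),W_M[s])$ of Section~\ref{S:QR}, and then to check the two extra assertions: that the resulting module is a quaternionic discrete series, and that its infinitesimal character is $\lambda+\rho(\rD_4)$.

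For the translation I would match minimal $K$-types. Restricting $\Vmin$ of $\rE_{8,4}$ to $\Spin_8\times\Spin(4,4)$ with $\Spin_8$ compact gives $\Vmin=\bigoplus_\lambda(\lambda)\otimes\Theta(\lambda)$, and \cite[Theorem 1.4.1]{p33} identifies each $\Theta(\lambda)$ with an explicit multiple of a quaternionic representation of $\Spin(4,4)$. On the other side, \eqref{E:K_types} gives $\bfA(\Spin(4,4),W_M[s])=\bigoplus_{k\ge0}(s+k-2)\otimes(S^k(V_M)\otimes W_M)$, with $M=\SU(2)^3$ and $V_M=\bbC^2\otimes\bbC^2\otimes\bbC^2$. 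Comparing the lowest $\SU_0(2)$-weight and the bottom $M$-type of the two descriptions forces $s=10+a+b$ and $W_M=(a-b,c+d,c-d)$, and the associated multiplicity space in Loke's formula has dimension $b-c+1$ (which is $\ge1$ since $b\ge c$).

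The discrete-series assertion is built into the construction: $\bfA(\Spin(4,4),W_M[s])=\Gamma^1_{K_1/L_1}(V)$ is obtained by applying the degree-one Zuckerman functor in the $\SU(2)$-direction to the generalized Verma module $V=V(\Spin(4,4),W_M[s])$, and for $s=10+a+b$ the inducing parameter is well inside the range where this produces an irreducible quaternionic discrete series (the Gross--Wallach construction \cite{GW1}, \cite{GW2}); in particular $\bfA$ is irreducible, equal to its unique irreducible quotient $\sigma(\Spin(4,4),W_M[s])$ from the Corollary to Theorem~\ref{T:generation}. For the infinitesimal character I would evaluate \eqref{E:inf_A_general} in type $\rD_4$: using $\varepsilon$-coordinates with $\alpha_0=-\varepsilon_1-\varepsilon_2$ and $\rho(\rD_4)=(3,2,1,0)$, writing the simple roots of $\mathfrak{m}=\mathfrak{su}(2)^3$ perpendicular to $\alpha_0$ and substituting $W_M=(a-b,c+d,c-d)$, $s=10+a+b$, one simplifies $\mu+\tfrac{s}{2}\alpha_0+\rho$ to a $W(\rD_4)$-representative of $(a+3,b+2,c+1,d)=\lambda+\rho(\rD_4)$, which is exactly the infinitesimal character of the finite-dimensional $\Spin_8$-representation $(\lambda)$; this is consistent with the general principle (cf.\ \cite{LiDuke}) that a theta lift of a finite-dimensional representation inherits its infinitesimal character.

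The main obstacle is the bookkeeping in the translation step. As the paper emphasizes, exceptional groups lack structural uniformity, so one must pin down exactly how the dual pair sits in $\rE_{8,4}$ --- in particular, which of the three $\SU(2)$-factors of $M$ for $\Spin(4,4)$ carries which of $a-b$, $c+d$, $c-d$, and the triality/outer-automorphism normalization of the embedding $\Spin_8\hookrightarrow\rE_{8,4}$. That normalization is precisely what makes the $(c+d,c-d)$ pairing come out so that the infinitesimal character is $\lambda+\rho(\rD_4)$ on the nose rather than its outer twist. Once the dictionary with \cite{p33} is fixed, the discrete-series and infinitesimal-character checks are routine.
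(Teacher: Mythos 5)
Your proposal takes essentially the same route as the paper: Theorem \ref{TSpin8Spin44} is stated there without proof, as an extraction of \cite[Theorem 1.4.1]{p33} into the notation $\bfA(\Spin(4,4),W_M[s])$, which is exactly what you do. Your additional checks (matching minimal $K$-types to fix $s=10+a+b$ and $W_M=(a-b,c+d,c-d)$, the discrete-series range $10+a+b>8$, and the evaluation of \eqref{E:inf_A_general} giving $\lambda+\rho(\rD_4)$ up to the Weyl group and the outer-automorphism normalization you correctly flag) are sound and consistent with how the paper uses the result.
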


\subsection{}
Restricting $\Vmin$ to the dual pair $\Spin(9) \times \Spin(4,3)$ gives
\[
\Vmin = \bigoplus_{w} (w) \otimes \Theta(w)
\]
where the sum is over all highest weights $w$ for $\Spin(9)$. Here $(w)$ denotes the corresponding irreducible representation of $\Spin(9)$. The factor
$\Theta(w)$ is a an admissible quaternionic representation of $\Spin(4,3)$.

Fix $b\geq d\geq 0$ two half-integers, congruent modulo $1$.  Let $\lambda=(b,b,d,d)$, considered a highest weight for $\Spin(8)$. By the Gelfand--Zetlin rule, 
irreducible representations of $\Spin(9)$ containing $(\lambda)$ are $(w)$ where 
$w=(a,b,c,d)$.  (Observe that there are $b-d+1$ choices for $c$.) 
Thus, by the see-saw, the restriction of $\Theta(\lambda)$ to $\Spin(4,3)$ is isomorphic to a sum 
\begin{equation} \label{eq_seesaw_8} 
\Theta(\lambda) = \bigoplus_{w} \Theta(w)
\end{equation} 
 over all $\Spin(9)$-highest weights $w=(a,b,c,d)$ with $b$ and $d$ fixed. Now, by  \Cref{TSpin8Spin44}, we have 
\[ 
\Theta(\lambda)= (b-d+1)  \bfA(\Spin(4,4), (0,2d,0)[10+2b]). 
\] 
Since this module is unitarizable, and the restriction to $\Spin(4,3)$ is admissible, 
the filtration in Section \ref{SS:restricting_A} becomes a direct sum. 
\begin{align} \label{eq_filtration_8} 
\bfA(\Spin(4,4), (0,2d,0)[10+2b])  = &\bigoplus_{n=0}^\infty \bfA(\Spin(4,3), (n) \otimes (2d)  [10+ 2b + n])\\ 
 =  &\bigoplus_{a\geq b}^\infty   \bfA(\Spin(4,3), (a-b) \otimes (2d) [10+a+b]. 
\end{align}
Observe that the $\Spin(4,3)$-modules appearing in the above sum are automatically irreducible, since they are unitarizable and
have unique irreducible quotients.  
Combining the equations (\ref{eq_seesaw_8}) and (\ref{eq_filtration_8}), for any $b\geq d$, we have 
\begin{equation} \label{eq_combined_8} 
 \bigoplus_{w} \Theta(w)= (b-d+1) \bigoplus_{a\geq b}^\infty   \bfA(\Spin(4,3), (a-b) \otimes (2d) [10+a+b], 
\end{equation} 
where the sum, on the left hand side, is taken over all highest weights $(a,b,c,d)$ with $b$ and $d$ fixed.  

\begin{thm} \label{TthetaSpin9w}
Let $w = (a,b,c,d)$ be a highest weight for $\Spin(9)$. Then
\[
\Theta(w) = \bfA(\Spin(4,3), (a-b, 2d)[10 + a+b]).
\]
This is an irreducible (unitarizable) quaternionic discrete series representation with infinitesimal character
\[
\left(a  +  \frac{7}{2}, b + \frac{5}{2}, d+ \frac{1}{2} \right).
\]

\end{thm}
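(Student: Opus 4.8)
\smallskip

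\noindent\textbf{Proof proposal.} The plan is to pin down $\Theta(w)$ by squeezing it between restrictions of the already-computed $\Spin(8)\times\Spin(4,4)$ theta lifts of Theorem~\ref{TSpin8Spin44}, using the see-saw of $\Spin(9)\times\Spin(4,3)$ against $\Spin(8)\times\Spin(4,4)$, and then to close the argument with the counting identity~(\ref{eq_combined_8}). Write $w=(a,b,c,d)$ with $a\geq b\geq c\geq d\geq 0$; the one external input I would use is the classical branching rule $\Spin(9)\downarrow\Spin(8)$, namely that a dominant $\Spin(8)$-weight $\mu=(\mu_1,\mu_2,\mu_3,\mu_4)$ occurs in $(w)$ with multiplicity one exactly when $w_1\geq\mu_1\geq w_2\geq\mu_2\geq w_3\geq\mu_3\geq w_4\geq|\mu_4|$. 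I would feed the see-saw the three $\Spin(8)$-weights $(b,b,d,d)$, $(a,b,d,d)$, and $(a,b,c,d)$ itself, each contained in $(w)$ by this rule. Throughout I would use that every $\Theta(w)\subseteq\Vmin$ is $\SU_0(2)$-admissible and unitarizable, hence a direct sum of irreducible unitarizable modules, and that the $\Spin(4,4)$-modules $\bfA(\Spin(4,4),\cdot[\cdot])$ appearing below are unitarizable (they are the quaternionic discrete series of Theorem~\ref{TSpin8Spin44}), so that by Section~\ref{SS:restricting_A} their restrictions to $\Spin(4,3)$ split as direct sums of the automatically irreducible modules $\bfA(\Spin(4,3),\cdot[\cdot])$, exactly as in~(\ref{eq_filtration_8}); recall also that $S^k(\mathfrak{n}/\mathfrak{n}')\cong(k)\otimes(0)$ as an $\SU(2)\times\Spin(3)$-module, with $\Spin(3)$ diagonal in the last two $\SU(2)$-factors of $M$.

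First I would locate the summands of $\Theta(w)$. From $(b,b,d,d)\subset(w)$ the see-saw gives $\Theta(w)\subseteq\Theta((b,b,d,d))|_{\Spin(4,3)}$, and combining Theorem~\ref{TSpin8Spin44} with Section~\ref{SS:restricting_A} one gets
\[ \Theta((b,b,d,d))|_{\Spin(4,3)}\;\cong\;(b-d+1)\bigoplus_{a'\geq b}\bfA\big(\Spin(4,3),(a'-b,2d)[10+a'+b]\big). \]
From $(a,b,d,d)\subset(w)$ one similarly gets $\Theta(w)\subseteq\Theta((a,b,d,d))|_{\Spin(4,3)}$, and since $\Theta((a,b,d,d))=(b-d+1)\bfA(\Spin(4,4),(a-b,2d,0)[10+a+b])$ its restriction involves only modules $\bfA(\Spin(4,3),\cdot[s'])$ with $s'\geq 10+a+b$. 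Intersecting, every irreducible summand of $\Theta(w)$ must be $\bfA(\Spin(4,3),(a'-b,2d)[10+a'+b])$ for some $a'\geq a$; write $m_{w,a'}$ for its multiplicity in $\Theta(w)$, so $m_{w,a'}=0$ for $a'<a$.

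Next I would show $m_{(a,b,c,d),a}=1$ for every $w=(a,b,c,d)$, by downward induction on $c$ with $a,b,d$ fixed. Using $\lambda=(a,b,c,d)\subset(w)$, Theorem~\ref{TSpin8Spin44} gives $\Theta(\lambda)=(b-c+1)\bfA(\Spin(4,4),(a-b,c+d,c-d)[10+a+b])$; its restriction to $\Spin(4,3)$ has bottom layer $\bigoplus_{e=d}^{c}\bfA(\Spin(4,3),(a-b,2e)[10+a+b])$ (from $(c+d)\otimes(c-d)=\bigoplus_{e=d}^{c}(2e)$ over $\Spin(3)$) and all higher layers with parameter $>10+a+b$, so $\bfA(\Spin(4,3),(a-b,2d)[10+a+b])$ occurs there exactly once, hence with total multiplicity $b-c+1$ in $\Theta(\lambda)|_{\Spin(4,3)}=\bigoplus_{w''\supset\lambda}\Theta(w'')$. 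On the other hand, the summand description of the first step forces the only $w''\supset\lambda$ that can carry $\bfA(\Spin(4,3),(a-b,2d)[10+a+b])$ to be $w''=(a,b,r,d)$ with $c\leq r\leq b$. Comparing, $\sum_{r=c}^{b}m_{(a,b,r,d),a}=b-c+1$; for $c=b$ this is $m_{(a,b,b,d),a}=1$, and the inductive hypothesis $m_{(a,b,r,d),a}=1$ for $r>c$ forces $m_{(a,b,c,d),a}=1$ in general.

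Finally I would combine. Fixing $b,d$ and $V\geq b$ and applying~(\ref{eq_combined_8}) with $a'=V$, the terms with first coordinate $>V$ drop by the first step, so $\sum_{b\leq a''\leq V}\sum_{c=d}^{b}m_{(a'',b,c,d),V}=b-d+1$; the $a''=V$ terms already contribute $b-d+1$ by the second step, hence all others vanish. Therefore $m_{(a,b,c,d),a'}=\delta_{a,a'}$, giving
\[ \Theta(w)\;\cong\;\bfA\big(\Spin(4,3),(a-b,2d)[10+a+b]\big). \]
This module is unitarizable with a unique irreducible quotient, hence irreducible; by~(\ref{inf_A}) with $m=a-b$, $n=2d$, $s=10+a+b$ its infinitesimal character is $\frac12(-2b-5,-2a-7,2d+1)$, which is $\rB_3$-Weyl conjugate to $(a+\frac72,b+\frac52,d+\frac12)$, and this is dominant regular for $a\geq b\geq d\geq 0$, so it is a quaternionic discrete series. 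The hard part I expect to be the second step: one must run the Gelfand--Zetlin interlacing carefully enough to see that precisely the weights $(a,b,r,d)$, $c\leq r\leq b$, contribute the target module, and to track the $\SU(2)\times\Spin(3)$-content through the $\Spin(4,4)\to\Spin(4,3)$ restriction so that the bottom layer really produces $\bfA(\Spin(4,3),(a-b,2d)[10+a+b])$ with multiplicity one; the downward induction on $c$ then bootstraps the rest.
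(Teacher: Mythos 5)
Your argument is correct, and it reaches the conclusion by a genuinely different route for the key step. The paper also reduces everything to the counting identity (\ref{eq_combined_8}), but it establishes the crucial input --- that $\Theta(w)$ contains $\bfA(\Spin(4,3),(a-b,2d)[10+a+b])$ --- by computing the minimal $K$-type of $\Theta(w)$ inside the compact chain $\rE_7\supset\SU(2)\times\Spin(12)\supset\SU(2)\times\Spin(3)\times\Spin(9)$, using Lemma \ref{LrestE7D6}, Gelfand--Zetlin, and a compact see-saw through $\Spin(10)\times\Spin(2)$ to pin down the $\SU(2)\times\Spin(3)$-multiplicity space $(a-b)\otimes(2d)$ in $((a+b)\varpi_7)$. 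You avoid that branching computation entirely: you stay inside the theta machinery, playing the see-saw against $\Spin(8)\times\Spin(4,4)$ for three different $\Spin(8)$-weights $(b,b,d,d)$, $(a,b,d,d)$, $(a,b,c,d)$ to first bound the possible summands ($a'\geq a$ with the fixed $b,d$-data), then extract the exact multiplicity one of the diagonal summand by a downward induction on $c$, and finally kill the off-diagonal terms with (\ref{eq_combined_8}). I checked the pivotal points: the interlacing analysis in your second step correctly isolates $w''=(a,b,r,d)$, $c\leq r\leq b$, as the only weights over $\lambda=(a,b,c,d)$ that can carry the target module, the Clebsch--Gordan decomposition $(c+d)\otimes(c-d)=\bigoplus_{e=d}^{c}(2e)$ gives exactly one copy in the bottom filtration layer, and the base case $c=b$ of the induction is sound. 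What the paper's route buys is the explicit minimal $K$-type of $\Theta(w)$ as a byproduct (information of independent use, e.g.\ for the $F_4\to\Spin(9)$ branching application later); what yours buys is self-containment within Theorem \ref{TSpin8Spin44} and the restriction formalism of Section \ref{SS:restricting_A}, at the cost of more bookkeeping. Both are complete proofs.
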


\begin{proof} In view of the equation (\ref{eq_combined_8}) it suffices to prove the inclusion 
\begin{equation}\label{E:inclusion} 
\Theta(w) \supseteq \bfA(\Spin(4,3), (a-b, 2d)[10 + a+b]).
\end{equation} 
We shall do that by computing the minimal type of $\Theta(w)$. We are looking for a minimal $\SU_0(2) \times \SU(2)\times \Spin(3)$-type. 
Recall that the maximal compact of $\rE_{4,4}$ is $\SU_0(2) \times \rE_7$ and we have further inclusions 
\begin{equation}  \label{E:inclusions}
\rE_7 \supset \SU(2) \times \Spin(12) \supset \SU(2) \times \Spin(3) \times \Spin(9). 
\end{equation} 
We need a branching rule (see \cite[Lemma 6.1.1]{p33}).

\begin{lemma} \label{LrestE7D6} Consider $\Spin(12)$-highest weights of the form 
$\lambda(x,y,z) =(x,y,z,z,z,z)$. 
We have
\[
{\mathrm{Res}}^{\rE_7}_{\SU_0(2) \times \Spin(12)} (k \omega_7) 
= \bigoplus_{x+y=k} (x-y) \otimes (\lambda(x,y,z)). \ \ \qed
\]
\end{lemma}

\begin{lemma}
Let $w = (a,b,c,d)$ be a highest weight for $\Spin(9)$, the group sitting in $\rE_7$ as in (\ref{E:inclusions}).  
Let $k$ be the minimal integer such that $(k \varpi_7) \supseteq (w)$.
Then $k = a+b$.  In that case, 
\[ 
\Hom_{\Spin(9)}( (w), ((a+b) \varpi_7) \cong (a-b)\otimes (2d) 
\] 
as $\SU(2) \times \Spin(3)$-module, where $(2d)$ is the irreducible representation of $\Spin(3)$ with the highest weight $2d$.  Here we are using the convention that 
the weights of $\Spin(3)$ are integral (as opposed to half integral).  
\end{lemma}

\begin{proof}
Suppose the $\Spin(12)$-module $(\lambda(x,y,z))$ contains $(w)$.
By the Gelfand--Zetlin rule \cite{GT}, this happens if and only if
\[
x \geq a \geq z, \, y \geq b \geq z, \, z \geq c \geq z \geq d \geq 0.
\]
The minimal value of $k=x+y$ satisfying the above inequalities is $k=a+b$ for $x=a$ and $y=b$. Moreover, since $z$ must be $c$, we see that $(w)$ is contained in the summand 
\[
(a-b) \otimes (\lambda(a,b,c))
\]
of $((a+b)\varpi_7)$, from \Cref{LrestE7D6}.

It remains to compute the $\Spin(3)$-module 
\[ 
\Hom_{\Spin(9)}( (w), (\lambda(a,b,c))). 
\] 
 To that end, we shall use a see-saw argument and the restriction formula to $\Spin(10)\times \Spin(2)$. 
  Note that $\Spin(2)$ is the maximal torus in $\Spin(3)$.  
By \cite{GT}, the $\Spin(10)$-modules contained in $(\lambda(a,b,c))$ and containing $(w)$ have highest weights 
$(a,b,c,c,e)$ where $|e|\leq d$.  On each of these, $\Spin(2)$ acts by the weight $e$ by Proposition \ref{P:D_n branching}. (The convention there is that 
the weights for $\Spin(2)$ are half-integers.) 
Thus the above is the $(2d+1)$-dimensional irreducible representation of $\Spin(3)$, as claimed. 
\end{proof}

Now we can show the inclusion (\ref{E:inclusion}). Using the factorization $K=\SU_0(2)\times M$, the smallest $\SU_0(2)$-type of $\Theta((a,b,c,d))$ is $(a+b+8)$ and on 
$M=\SU(2)\times \Spin(3)$ we have $(a-b)\otimes (2d)$, the minimal type of $\bfA(\Spin(4,3), (a-b, 2d)[10 + a+b])$. Hence $\Theta(w)$ contains~$\bfA(\Spin(4,3), (a-b, 2d)[10 + a+b])$.  
\end{proof}

\section{Dual pair in $F_4$}  \label{S:F4}

\subsection{The minimal representation}
The minimal representation $\Vmin$ of $\rF_{4,4}$ is the quaternionic representation $\sigma(\rF_{4,4},\bbC[3])$, of the nonlinear 2-fold cover. 
This representation has $K$-types ($K = \SU_0(2) \times \rSp(3)$)
\[
\Vmin = \oplus_{k=0}^\infty (k+1) \otimes (k \varpi_3)
\]
where $\omega_3=(1,1,1)$ is the third fundamental weight of~$\rSp(3)$. This is a representation of a non-linear 2-fold cover $\tilde{\rF}_{4,4}$ of $\rF_{4,4}$.  

\subsection{Local theta lifts} 
The (linear) group $\rF_{4,4}$ contains a see-saw of dual pairs 

\begin{equation} \label{eqseesaw_2}
\arraycolsep=1.5pt\def\arraystretch{1.35}
\begin{array}{ccc}
\mathrm{O}(2) & & \Spin(4,4)\\
| & \bigtimes & | \\
K_4 & & \Spin(4,3).  
\end{array}
\end{equation}
where $K_4$ is the  center of $\Spin(4,4)$. It is a Klein 4-group. In $\tilde{\rF}_{4,4}$ the right hand side of the see-saw pair is replaced by two-fold covers (non-linear groups) 
while the left hand side splits. We describe the splitting. 
The embedding of the maximal compact of (the cover of) $\Spin(4,4)$ into the maximal compact of $\tilde F_{4,4}$  is given by 
\[ 
\SU_0(2) \times \SU(2) \times \SU(2) \times \SU(2) \cong   \SU_0(2) \times \rSp(1) \times \rSp(1) \times \rSp(1)  \subset \SU_0(2) \times \rSp(3).
\] 
We give the splitting of $K_4$ in the product of the three $\rSp(1)$, 
as triples of  $\pm1\in \rSp(1)$ whose product is $1$.
The group $S_3$ of outer automorphisms of $\Spin(4,4)$ permutes the three $\rSp(1)$.  
A choice of $S_2 \subset S_3$ picks $\Spin(3,4) \subset \Spin(4,4)$. We fix the choice so that $S_2$ fixes the first $\rSp(1) \subset \rSp(3)$.   
 Hence the factor $\Spin(3)$ of the maximal compact subgroup of $\Spin(4,3)$ embeds diagonally in the last two $\rSp(1) \times \rSp(1)  \subset \rSp(2)$. 
 The centralizer of $\Spin_3\cong \rSp(1)$ in $\rSp(2)$ is $\mathrm{O}(2)$, a member of a Howe dual pair, however, this group clearly does not contain 
 the Klein 4 group. We need to twist the embedding of $g\in \mathrm{O}(2)$ into $\rSp(2)$ by the sign $\det(g)$ in the first $\rSp(1)$, to get the 
  embedding of $\mathrm{O}(2)$ that contains the Klein 4 group. 

Our goal is to compute the correspondence for the dual pair 
$\mathrm{O}(2) \times \Spin(4,3)$. The strategy here is similar to other cases: we shall use the above see-saw.
We have the following, Theorem~4.6.1 \cite{p33}  or Theorem 4.3.2 in \cite{thesis}:

\begin{thm} 
The restriction of $\Vmin$ to $\Spin(4,4)$ is a sum of four  irreducible representations: $\sigma(\Spin(4,4), (0,0,0)[3])$, fixed by $S_3$, and 
\[ 
 \sigma(\Spin(4,4), (1,0,0)[4]) \oplus  \sigma(\Spin(4,4), (0,1,0)[4]) \oplus \sigma(\Spin(4,4),  (0,0,1)[4]). 
\] 
permuted transitively by $S_3$.  
\end{thm}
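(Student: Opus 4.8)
The plan is to cut $\Vmin|_{\Spin(4,4)}$ into pieces by restricting further to the common $\SU_0(2)$ factor and to the central Klein four-group $K_4$ of $\Spin(4,4)$, and then to recognise each piece among the representations $\sigma(\Spin(4,4),\cdot)$ from its minimal $K$-type together with the $K$-type list \eqref{E:K_types}. First, since the $\SU_0(2)$ factor of the maximal compact subgroup of $\tilde{\rF}_{4,4}$ is the $\SU_0(2)$ factor of $\Spin(4,4)$, the decomposition $\Vmin=\bigoplus_{k\ge 0}(k+1)\otimes(k\varpi_3)$ shows that $\Vmin$ is $\SU_0(2)$-admissible as a $\Spin(4,4)$-module, with $\SU_0(2)$-types $(1),(2),(3),\dots$; being also unitarizable, $\Vmin|_{\Spin(4,4)}=\bigoplus_i\pi_i$ is a direct sum of irreducible, $\SU_0(2)$-admissible, unitarizable $(\mathfrak{so}(4,4),K)$-modules, each with minimal $\SU_0(2)$-type $(s_i-2)$ for some $s_i\ge 3$. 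By the construction of quaternionic modules in Section~\ref{S:QR} (and \cite{GW1,GW2}), the $M$-isotypic part $W^{(i)}$ of that minimal $\SU_0(2)$-type makes $\pi_i$ a quotient of $\bfA(\Spin(4,4),W^{(i)}[s_i])$, so $\pi_i\cong\sigma(\Spin(4,4),W^{(i)}[s_i])$ by Theorem~\ref{T:generation} and the corollary following it.

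Next, recall that $V_M=(1)\otimes(1)\otimes(1)$ for $\Spin(4,4)$ and that $K_4\subset\rSp(1)^3$ acts on the $\rSp(1)^3$-type $(\alpha)\otimes(\beta)\otimes(\gamma)$ by $(\epsilon_1,\epsilon_2,\epsilon_3)\mapsto\epsilon_1^{\alpha}\epsilon_2^{\beta}\epsilon_3^{\gamma}$, hence trivially on $V_M$ (since $\epsilon_1\epsilon_2\epsilon_3=1$ on $K_4$); so by \eqref{E:K_types} every $K$-type of a fixed $\sigma(\Spin(4,4),(\alpha,\beta,\gamma)[s])$ carries one and the same $K_4$-character, and $\Vmin|_{\Spin(4,4)}=\bigoplus_{\psi\in\widehat{K_4}}\Vmin[\psi]$ has at most four nonzero pieces. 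I would then inspect the bottom two $\SU_0(2)$-layers of $\Vmin$. The type $(1)$ occurs only for $k=0$, as the $K$-type $(1)\otimes(0,0,0)$ of multiplicity one, so some $\pi_i$ has this minimal type, forcing $\pi_i\cong\sigma(\Spin(4,4),(0,0,0)[3])$. The type $(2)$ occurs only for $k=1$, and $\res^{\rSp(3)}_{\rSp(1)^3}(\varpi_3)=(1,0,0)\oplus(0,1,0)\oplus(0,0,1)\oplus(1,1,1)$, of which $(1,1,1)$ is the degree-one part of $\sigma(\Spin(4,4),(0,0,0)[3])$ while $(1,0,0)$, $(0,1,0)$, $(0,0,1)$ are three new minimal $K$-types; hence $\sigma(\Spin(4,4),(1,0,0)[4])$, $\sigma(\Spin(4,4),(0,1,0)[4])$ and $\sigma(\Spin(4,4),(0,0,1)[4])$ also occur among the $\pi_i$, and they lie in the three nontrivial summands $\Vmin[\psi]$.

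It remains to prove that these four representations exhaust $\Vmin|_{\Spin(4,4)}$. By \eqref{E:K_types}, the combined $K$-types of $\bfA(\Spin(4,4),(0,0,0)[3])$ and $\bfA(\Spin(4,4),(e_j)[4])$ for $j=1,2,3$ (with $e_1=(1,0,0)$, $e_2=(0,1,0)$, $e_3=(0,0,1)$) are
\[
\bigoplus_{k\ge 0}(k+1)\otimes S^k(V_M) \;\oplus\; \bigoplus_{j=1}^{3}\bigoplus_{k\ge 0}(k+2)\otimes S^k(V_M)\otimes(e_j),
\]
and one must check that this coincides with $\bigoplus_{k\ge 0}(k+1)\otimes\res^{\rSp(3)}_{\rSp(1)^3}(k\varpi_3)$, the $K$-type of $\Vmin|_{\Spin(4,4)}$. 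Since $\bigoplus_j\sigma(\Spin(4,4),\cdot)\subseteq\Vmin|_{\Spin(4,4)}$ and $\sigma(\Spin(4,4),\cdot)\subseteq\bfA(\Spin(4,4),\cdot)$, this equality of $K$-types forces $\bfA=\sigma$ in each of the four cases and $\Vmin|_{\Spin(4,4)}=\bigoplus_j\sigma(\Spin(4,4),\cdot)$. Finally, the triality group $S_3$ of outer automorphisms of $\Spin(4,4)$ is realised by inner automorphisms of $\rF_{4,4}$, so it acts on $\Vmin$ and must permute the four summands as it permutes their minimal $K$-types: it fixes $\sigma(\Spin(4,4),(0,0,0)[3])$ and permutes $\sigma(\Spin(4,4),(1,0,0)[4])$, $\sigma(\Spin(4,4),(0,1,0)[4])$, $\sigma(\Spin(4,4),(0,0,1)[4])$ transitively. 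The one genuinely computational point, and the main obstacle, is this $K$-type exhaustion, i.e.\ the branching $\res^{\rSp(3)}_{\rSp(1)^3}(k\varpi_3)$ along the $\varpi_3$-ray for all $k$; I would extract it from the classical Littlewood-type restriction rule for $\rSp(3)\downarrow\rSp(1)^3$ together with the fact that $k\varpi_3$ is the Cartan component of $S^k(\wedge^3_0\bbC^6)$, the cases $k=0,1,2$ being straightforward checks. Everything else is formal given Theorem~\ref{T:generation} and its corollary.
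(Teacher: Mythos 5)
Your outline---decompose $\Vmin|_{\Spin(4,4)}$ into irreducibles using $\SU_0(2)$-admissibility and unitarizability, recognize each constituent as some $\sigma(\Spin(4,4),W[s])$ from its minimal $K$-type, and then prove exhaustion by a $K$-type count---is reasonable, and the first two steps (including the Klein-group and $S_3$ bookkeeping) are fine. Note that the paper itself does not prove this theorem: it quotes it from \cite{p33} (Theorem 4.6.1) and \cite{thesis}, where the argument runs through the exact sequence realizing $\Vmin$ inside $\bfA(\rF_{4,4},\bbC[3])$ and the computation that $\bbP V_0$ misses the minimal orbit. The fatal problem is your exhaustion step: the $K$-type identity you propose to verify,
\[
\bigoplus_{k\ge 0}(k+1)\otimes S^k(V_M)\ \oplus\ \bigoplus_{j=1}^{3}\bigoplus_{k\ge 0}(k+2)\otimes S^k(V_M)\otimes(e_j)\ \overset{?}{=}\ \bigoplus_{k\ge 0}(k+1)\otimes \mathrm{Res}^{\rSp(3)}_{\rSp(1)^3}(k\varpi_3),
\]
is \emph{false}. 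It holds at $\SU_0(2)$-levels $(1),(2),(3)$---exactly the cases $k=0,1,2$ you call straightforward checks---but fails at level $(4)$: the left side contributes $\dim S^3(V_M)+6\dim S^2(V_M)=120+216=336$ dimensions of $M$-types there, whereas $\dim(3\varpi_3)=330$. Concretely, the $K$-type $(4)\otimes(1,0,0)$ occurs in $\bfA(\Spin(4,4),(1,0,0)[4])$ (inside $(4)\otimes S^2(V_M)\otimes(1,0,0)$, via the summand $(2,0,0)\otimes(1,0,0)\supset(1,0,0)$) but has multiplicity zero in $\Vmin$.

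The reason is that the four constituents are the irreducible quotients $\sigma$, and these are \emph{proper} quotients of the corresponding $\bfA$'s: each $\bfA(\Spin(4,4),(e_j)[4])$ already sheds a copy of $(e_j)$ at its degree-two layer, and $\bfA(\Spin(4,4),(0,0,0)[3])$ sheds the trivial $M$-summand of $S^4(V_M)$ (the line spanned by Cayley's hyperdeterminant of $\bbC^2\otimes\bbC^2\otimes\bbC^2$) at degree four. So the implication ``equality of $K$-types forces $\bfA=\sigma$'' can never be run; what you actually need are the $K$-types of the $\sigma$'s themselves, and determining those (equivalently, the coordinate rings of the relevant orbit closures, i.e.\ which part of $S^\bullet(V_M)$ survives modulo the ideal cut out by the image of $I^2$ of the minimal orbit) is precisely the nontrivial content of the cited result---it is not supplied by the Littlewood-type branching of $k\varpi_3$ alone. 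A secondary loose end: you assert without argument that $(2)\otimes(1,1,1)$ lies in the summand generated by $(1)\otimes(0,0,0)$ rather than starting a fifth summand $\sigma(\Spin(4,4),(1,1,1)[4])$; this too was to be absorbed by the (failed) exhaustion count. As it stands, the theorem is not proved.
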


Recall that the characters of $\mathrm{SO}(2)$ are parameterized by integers, and for every natural number $n$  there is is unique 2-dimensional 
representation $\tau(n)$ of $\mathrm O(2)$ whose restriction to $\mathrm{SO}(2)$ is $(-n)\oplus (n)$.
With respect to $\Spin(4,3) \times \mathrm{SO}(2)$ we can decompose 
\[ 
\Vmin=\bigoplus_{n\in \mathbb Z} \Theta(n) \otimes (n) 
\] 
where clearly $\Theta(n)\cong\Theta(-n)$, the lift of the two-dimensional representation $\tau(n)$ of $\mathrm O(2)$, if $n\neq 0$, and 
$\Theta(0)=\Theta(0)^+ \oplus \Theta(0)^-$, the sum of the lifts of the trivial and non-trivial characters of $\mathrm O(2)$. 

\begin{prop}  The infinitesimal character of $\Theta(n)$ is $\frac{1}{2}(n,2,1)$.    
\end{prop}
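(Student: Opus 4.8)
The plan is to compute the infinitesimal character of $\Theta(n)$ by passing through the see-saw in~\eqref{eqseesaw_2} to the Howe dual pair $\mathrm O(2)\times \rSp(3)$ sitting inside $\tilde{\rF}_{4,4}$, and tracking infinitesimal characters as in the previous sections. Concretely: the minimal representation $\Vmin$ of $\tilde{\rF}_{4,4}$ has been realized as $\sigma(\rF_{4,4},\bbC[3])$, hence is a quaternionic representation with infinitesimal character computed by \eqref{E:inf_A_general}, namely $\mu + s\alpha_0/2 + \rho$ with $\mu = 0$, $s = 3$; and by Theorem~\ref{TSpin8Spin44}-style arguments (or directly from \cite{LiDuke}) one knows the infinitesimal character of the lift $\Theta(n)$ of a representation of $\mathrm O(2)$ to $\Spin(4,3)$ is determined by the $\mathrm O(2)$-parameter. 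The cleanest route is to imitate the proofs of Theorems~\ref{T161}, \ref{Tmain} and the $E_7$ theorem: restrict $\Vmin$ to $\Spin(4,4)\cdot K_4$, use the known decomposition (the four summands $\sigma(\Spin(4,4), (0,0,0)[3])$ and the three $\sigma(\Spin(4,4), e_i[4])$ permuted by $S_3$), and then restrict each $\Spin(4,4)$-summand further to $\Spin(4,3)$ using Section~\ref{SS:restricting_A}.

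First I would set up the branching from $\mathrm O(2)$ through the Howe dual pair: the representation $\tau(n)$ of $\mathrm O(2)$ restricted to the Klein $4$-group $K_4\subset\Spin(4,4)$ picks out which of the four $\Spin(4,4)$-constituents of $\Vmin$ it can lift to. For $n\neq 0$ both the trivial and the nontrivial characters of $K_4$ may occur; for $n$ of fixed parity one gets a definite set of constituents. Then, for each relevant $\sigma(\Spin(4,4), W_M[s])$, apply the filtration $F_k/F_{k-1}\cong \bfA(\Spin(4,3), S^k(\mathfrak n/\mathfrak n')\otimes W_M[s+k])$ of Section~\ref{SS:restricting_A}, with $\mathfrak n/\mathfrak n'\cong (1)\otimes(0)$ for $\Spin(4,3)\subset\Spin(4,4)$, so $S^k(\mathfrak n/\mathfrak n')\cong (k)\otimes(0)$. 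This produces $\bfA(\Spin(4,3),(k,c)[s+k])$ for appropriate $c$, whose infinitesimal character is given by \eqref{inf_A}, namely $\tfrac12(k - s + 5, -k - s + 3, c+1)$. Matching this against the candidate $\tfrac12(n,2,1)$ forces $c=1$ (so the $\Spin(3)$-weight is $(1)$, consistent with where $\Spin(3)$ sits diagonally in the last two $\rSp(1)$'s), and forces a linear relation between $k$, $s$ and $n$; the key point is then that as the $\mathrm{SO}(2)$-weight $n$ varies, these infinitesimal characters are pairwise distinct, so the see-saw decomposition pins down $\Theta(n)$ uniquely and its infinitesimal character is read off as $\tfrac12(n,2,1)$, possibly after a Weyl-group adjustment to a standard representative.

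The main obstacle I anticipate is bookkeeping rather than conceptual: correctly tracking the \emph{twisted} embedding of $\mathrm O(2)$ into $\rSp(2)$ (the twist by $\det(g)$ in the first $\rSp(1)$ that makes $K_4$ lie in the image), and consequently which character of $K_4$ is attached to $\tau(n)$ for each parity of $n$ — this determines which of the four $\Spin(4,4)$-summands contributes and with what shift $s\in\{3,4\}$ in the $[s]$-parameter. A secondary subtlety is the behavior at $n=0$, where $\Theta(0)$ splits as $\Theta(0)^+\oplus\Theta(0)^-$; one must check that \emph{both} pieces (when nonzero) carry the same infinitesimal character $\tfrac12(0,2,1)$, which follows because $(0,0,0)[3]$ and (if it appears) $(0,0,0)[5]$-type contributions at the $\Spin(4,3)$ level must still match $\tfrac12(0,2,1)$ after the Weyl action — exactly as in the parenthetical remark after Theorem~\ref{Tmain}(iv) and Section~\ref{S34}. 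Once the embedding data is fixed, the infinitesimal-character comparison is a short computation with \eqref{E:inf_A_general} and \eqref{inf_A}, and the distinctness-in-$n$ argument finishes the proof exactly as in the $E_6$ and $E_7$ cases.
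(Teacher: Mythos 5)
Your proposal is circular, and the paper takes a genuinely different and shorter route that avoids the circularity.

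The paper's proof does \emph{not} go through the see--saw in \eqref{eqseesaw_2} at all. Instead it observes that the central element $(1,-1,-1)$ of $\mathrm{O}(2)$ has centralizer $\Spin(4,5)$ in $\rF_{4,4}$, so $\Spin(4,3)\times\mathrm O(2)\subset\Spin(4,5)$ is a classical dual pair. The restriction of $\Vmin$ to (the 2-fold cover of) $\Spin(4,5)$ splits into two irreducible pieces $\Vmin^\pm$, which are the small representations of \cite{LS1}, \cite{LS2}, and Theorem~1.2 of \cite{LS1} already gives the infinitesimal character correspondence for $\Spin(4,3)\times\mathrm{SO}(2)$ inside these; reading off $(n,2,1)/2$ is then immediate. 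This is an \emph{independent} determination of the infinitesimal character, which the paper then feeds into the next theorem to identify each $\Theta(n)$ with the correct filtration quotient of a $\Spin(4,4)$-constituent.

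Your proposal instead tries to use the see--saw and the filtration of Section~\ref{SS:restricting_A} to determine the infinitesimal characters directly. This is circular. The see--saw gives an isomorphism of the form
\[
\bigoplus_{n}\Theta(n)\ \cong\ \text{(a direct sum of }\Spin(4,3)\text{-modules)},
\]
and the filtration quotients on the right have pairwise distinct infinitesimal characters, but nothing in the see--saw tells you \emph{which} summand is $\Theta(n)$ for a given $n$. You write ``matching this against the candidate $\tfrac12(n,2,1)$ forces $c=1$'' --- so you are starting from the answer --- and then ``distinctness $\Rightarrow$ pins down $\Theta(n)$ uniquely,'' but distinctness only tells you the bijection exists, not what it is. To break the symmetry you need an external input: either an independent infinitesimal-character correspondence (what the paper uses, via $\Spin(4,5)$ and \cite{LS1}), or a minimal-$K$-type matching in the style of Section~\ref{S:E8}, which you do not carry out. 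Your parenthetical appeal to ``\cite{LiDuke}'' does not save the argument --- if a Li-type statement applied directly to this nonlinear $\tilde{\rF}_{4,4}$ pair, the paper would not need the $\Spin(4,5)$ detour. As written, the crucial identification step is assumed rather than proved.
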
 
\begin{proof}  The centralizer of  $(1,-1,-1)\in  K_3$ in $\rF_{4,4}$ is $\Spin(4,5)$. This is an easy check, for example the centralizer of $(1,-1,-1)$ in 
$\SU(2) \times \rSp(3)$ 
\[ 
\SU(2) \times \rSp(1)\times \rSp(2) \cong \Spin(4) \times \Spin(5),
\]  
the maximal compact in $\Spin(4,5)$. The restriction of $\Vmin$ to $\Spin(4,5)$ (its 2-fold cover to be precise) decomposes as a sum 
\[ 
\Vmin\cong \Vmin^+ \oplus \Vmin^-
\] 
where $(1,-1,-1)$ act on $\Vmin^{\pm}$ by $\pm$.  
By Theorem 4.3.1 in \cite{thesis}  $\Vmin^+$ and $\Vmin^-$ are irreducible. In fact, they are 
small representations studied in \cite{LS1} and \cite{LS2}.  

Also, observe that  $(1,-1,-1)\in \mathrm{O}(2)$ (in fact the central element). Thus 
\[ 
\Spin(4,3) \times \mathrm{O}(2) \subseteq \Spin(4,5), 
\] 
and we can decompose $\Vmin^+$ and $ \Vmin^-$ under the action of $\Spin(4,3) \times \mathrm{SO}(2)$. 
Clearly $\Vmin^+$ picks up $\Theta(n)$ for $n$ even and $\Vmin^-$ picks up $\Theta(n)$ for $n$ odd. The matching of infinitesimal characters for 
these correspondences is given by Theorem 1.2  in \cite{LS1}, that is, the infinitesimal character of $\Theta(n)$ is $(n,2,1)/2$.  
\end{proof}

\begin{thm} Let $k\geq 0$. With the identification $M\cong \SU(2) \times \Spin(3)$, 
\begin{itemize} 
\item $\Theta(2k) = \sigma(\Spin(4,3), (k,0)[k+3])$, where $k\neq 0$.  
\item  $\Theta(2k+1) = \sigma(\Spin(4,3), (k,1)[k+4])$. 
\item $\Theta(0)^+ = \sigma(\Spin(4,3), (0,0)[3])$ and $\Theta(0)^- \cong  \sigma(\Spin(4,3), (0,0)[5])$
\end{itemize} 
\end{thm}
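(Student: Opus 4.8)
The strategy mirrors the proofs of Theorems \ref{Tmain} and the $E_7$ case: I will use the see-saw \eqref{eqseesaw_2} together with the known lift for the dual pair $K_4 \times \Spin(4,4)$ and the restriction filtration from Section \ref{SS:restricting_A}, matching infinitesimal characters to pin down the lift. Concretely, each two-dimensional representation $\tau(n)$ of $\mathrm{O}(2)$, when restricted to the Klein four-group $K_4$, contains a prescribed character of $K_4$; after the twist of the embedding $\mathrm{O}(2)\hookrightarrow \rSp(2)$ by $\det$ described in the text, one reads off which of the four irreducible summands of $\Vmin|_{\Spin(4,4)}$ the lift $\Theta(n)$ must sit inside. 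Then, as in the earlier sections, that summand $\sigma(\Spin(4,4),(\alpha,\beta,\gamma)[s])$ is a quotient of $\bfA(\Spin(4,4),(\alpha,\beta,\gamma)[s])$, which by Section \ref{SS:restricting_A} carries a $\Spin(4,3)$-filtration whose successive quotients are $\bfA(\Spin(4,3),(m, \gamma')[s+m])$ for varying $m$ (using that $\Spin(3)$ embeds diagonally into the last two $\SU(2)$ factors of $M$, so $\mathfrak n/\mathfrak n' \cong (1)\otimes(0)$ and $S^k(\mathfrak n/\mathfrak n') \cong (k)\otimes(0)$). Comparing the infinitesimal character $\frac12(n,2,1)$ of $\Theta(n)$ from the preceding Proposition with the infinitesimal character of $\bfA(\Spin(4,3),(m,n')[s+m])$ computed from \eqref{inf_A} forces a unique value of $m$; since these infinitesimal characters are pairwise distinct as $m$ varies, $\Theta(n)$ must be a quotient (hence, being unitarizable and of finite length, isomorphic to the unique irreducible quotient) of the single surviving term.

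\textbf{Carrying out the cases.} For $n=2k$ with $k\ge 1$, I expect the relevant $\Spin(4,4)$-summand to be one of the three conjugate representations $\sigma(\Spin(4,4),(0,0,1)[4])$ (or its $S_3$-conjugate); its $\Spin(4,3)$-filtration has quotients $\bfA(\Spin(4,3),(m, ?)[4+m])$, and matching $\frac12(2k,2,1)$ against $\frac12(m-s+5,-m-s+3, n'+1)$ with $s=4+m$ should yield $m=k$, $n'=0$, giving $\Theta(2k)=\sigma(\Spin(4,3),(k,0)[k+3])$ after shifting $s$ appropriately; one must double-check the bookkeeping so that $s-2 = k+1 = n+2-k\cdot(\text{something})$, i.e. that the $\SU_0(2)$-type recorded in \eqref{eqKtypesofVmin}-type data is consistent. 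For $n=2k+1$ the lift lands in the union of the other two conjugate summands, and the same comparison should give $\Theta(2k+1)=\sigma(\Spin(4,3),(k,1)[k+4])$. For $n=0$, the representation $\Theta(0)$ splits as $\Theta(0)^+ \oplus \Theta(0)^-$ according to the trivial versus sign character of $\mathrm{O}(2)$: the $+$ part lands in the $S_3$-fixed summand $\sigma(\Spin(4,4),(0,0,0)[3])$ and gives $\Theta(0)^+ = \sigma(\Spin(4,3),(0,0)[3])$ exactly as in case (ii) of Theorem \ref{Tmain}; the $-$ part is the subtle one, and by analogy with case (iv) there it should come from the filtration term $\bfA(\Spin(4,3),(0,0)[5])$ whose infinitesimal character matches $\frac12(0,2,1)$ after a Weyl-group adjustment.

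\textbf{Main obstacle.} The delicate point is \emph{not} the infinitesimal-character matching — that is routine once the correct $\Spin(4,4)$-summand is identified — but rather (a) correctly tracking the $\det$-twist in the embedding $\mathrm{O}(2)\hookrightarrow \rSp(2)\subset \rSp(3)$ so that one knows precisely which character of the Klein four-group $K_4$ appears in $\tau(n)$, and hence into which of the four $\Spin(4,4)$-constituents $\Theta(n)$ embeds (with the parity of $n$ governing the even/odd split, as in the Proposition's proof via $\Spin(4,5)$), and (b) the $n=0$ minus case, where one must verify non-vanishing-into-the-right-summand and rule out the competing term; here I would lean on the already-established parity statement ($\Vmin^-$, on which $(1,-1,-1)$ acts by $-1$, picks up exactly the odd $n$, so $\Theta(0)^-$ is constrained to the part of $\Vmin^+$ orthogonal to the minimal type) together with the infinitesimal-character computation $\frac12(0,2,1)$, which by \eqref{inf_A} with $m=0$, $n'=0$ forces $s=5$. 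With these two points handled, the three bullets follow by the now-standard quotient-plus-unitarizability argument, completing the proof.
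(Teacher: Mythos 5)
Your overall strategy --- see-saw identities indexed by the characters of the Klein four-group $K_4$, the $\Spin(4,3)$-filtration of Section \ref{SS:restricting_A}, and matching against the infinitesimal character $\tfrac12(n,2,1)$ of $\Theta(n)$, followed by the unitarizable-quotient argument --- is exactly the paper's, and you correctly single out the two genuinely delicate points (the $\det$-twist bookkeeping and the isolation of $\Theta(0)^-$). However, your assignment of parities to $\Spin(4,4)$-summands is off, and this is precisely the source of the ``shifting $s$ appropriately'' discrepancy you flag. Since $\tau(n)$ restricts to $K_4$ as the sum of the two characters on which the central element $(1,-1,-1)$ acts by $(-1)^n$, the even lifts $\Theta(2k)$, $k\geq 1$, sit inside \emph{both} the $S_3$-fixed summand $\sigma(\Spin(4,4),(0,0,0)[3])$ (trivial $K_4$-character, together with $\Theta(0)^+$) and the $S_2$-fixed summand $\sigma(\Spin(4,4),(1,0,0)[4])$ (together with $\Theta(0)^-$); it is the \emph{odd} lifts that fill the remaining two conjugates $\sigma(\Spin(4,4),(0,1,0)[4])\cong\sigma(\Spin(4,4),(0,0,1)[4])$. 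For those two, $W_M$ restricts to $(0)\otimes(1)$ of $\SU(2)\times\Spin(3)$, so the filtration quotients are $\bfA(\Spin(4,3),(j,1)[4+j])$ and necessarily give $n'=1$: running your even-case computation inside $(0,0,1)[4]$ as written would land you on the odd answer. The paper proves the first bullet from the $[3]$-summand, whose quotients $\bfA(\Spin(4,3),(j,0)[3+j])$ have infinitesimal character $\tfrac12(2j,2,1)$ and give $j=k$ with no shift; the summand $(1,0,0)[4]$ is reserved for isolating $\Theta(0)^-$, where $W_M=(1)\otimes(0)\otimes(0)$ makes each filtration level a sum of two terms $\bfA(\Spin(4,3),(j\pm 1,0)[4+j])$ and only $\bfA(\Spin(4,3),(0,0)[5])$ (the $j=1$ minus term) carries the infinitesimal character $\tfrac12(0,2,1)$. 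With that relabeling your plan goes through essentially verbatim.
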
  
\begin{proof}  Using the $\Spin(4,4)$ dual pair, each character of the Klein group gives a see-saw identity. Two of them are 
\[  
\bigoplus_{k\geq 0} \Theta(2k+1) \cong \sigma(\Spin(4,4), (0)\otimes (1,0)[4]) \cong  \sigma(\Spin(4,4), (0)\otimes (0,1)[4]). 
\] 
Now each $\bfA(\Spin(4,4), (0)\otimes (1,0)[4])$ and $\bfA(\Spin(4,4), (0)\otimes (0,1)[4])$ has a $\Spin(4,3)$-filtration (see Section  \ref{SS:restricting_A}) 
whose subquotients are 
\[ 
F_k/F_{k-1} \cong \bfA(\Spin(4,3), (k)\otimes (1)[k+4]. 
\] 
The infinitesimal character of this representation is (\ref{inf_A})
\[ 
\frac{1}{2}(1, -2k -1 , 2)\sim \frac{1}{2}(2k+1 , 2,1)
\] 
and this is exactly the infinitesimal character of $\Theta(2k+1)$.  Thus $\Theta(2k+1)$ must be a unitarizable quotient of $\bfA(\Spin(4,3), (k)\otimes (1)[k+4]$.  
Hence the second bullet.  The first bullet is proved in the same way, using the see-saw identity 
\[ 
\sigma(\Spin(4,4), (0)\otimes (0,0)[3])\cong \Theta(0)^+ \oplus_{k>1} \Theta(2k). 
\] 
We leave the details to the reader. It remains to determine $\Theta(0)^-$.  We use the remaining see-saw identity, 
\[ 
\sigma(\Spin(4,4), (1)\otimes (0,0)[4])\cong \Theta(0)^- \oplus_{k>1} \Theta(2k). 
\] 
Since we already know $\Theta(2k)$ for $k>0$, it remains to isolate $\Theta(0)^-$ using the infinitesimal character.  
By Section  \ref{SS:restricting_A}, $\bfA(\Spin(4,4), (1) \otimes (0,0)[4])$ has a $\Spin(4,3)$-filtration whose subqutients are 
\begin{equation} \label{eq_two_terms} 
F_k/F_{k-1}=\bfA(\Spin(4,3), (k+1) \otimes (0)[4 +k]) \oplus  \bfA(\Spin(4,3), (k-1) \otimes (0)[4 +k]). 
\end{equation} 
These the summands two have infinitesimal characters 
\[ 
\frac{1}{2}(2, -2k -2, 1) \text{ and } \frac{1}{2}(0, -2k, 1) 
\] 
respectively. Only  $\bfA(\Spin(4,3), (0) \otimes (0)[5])$  (the second summand in (\ref{eq_two_terms}) for $k=1$) has the infinitesimal character of 
$\Theta(0)$, that is,  $\frac{1}{2}(0, 2, 1)$. 
\end{proof}

\section{Representations of  $\PU(2,1)$ and $G_2$}  \label{S:unitary}
In this section, we list the representations of $G_2$ and $\PU(2,1)$ whose lifts we would like to compute. Our main goal is to compute $\Theta(\pi')$ when $\pi'$ is a unitarizable representation of $G_2$ with integral infinitesimal character. It is known that such representations are realized as $A_{\mathfrak q}(\lambda)$ modules, so our first step is to write them down. It will be crucial to know their $K$-types.
\subsection{$K$-types for $G_2$}
\label{MKTG2}
We will refer to the following picture:
\begin{center}
	\scalebox{0.9}{
  \begin{tikzpicture}[
    -{Straight Barb[bend,
       width=\the\dimexpr10\pgflinewidth\relax,
       length=\the\dimexpr12\pgflinewidth\relax]},
  ]
    \foreach \i in {0, 1, ..., 5} {
      \draw[thick] (0, 0) -- (\i*60:2);
      \draw[thick] (0, 0) -- (30 + \i*60:3.5);
    }
	\foreach \x in {0,1,2,3,4,5,6,7,8,9,10}
	\foreach \y in {0,1,2,3,4,5}
	\filldraw(\x,1.75*\y) circle (1pt);
    \filldraw[red](2,0) circle (2pt);
    \filldraw[red](0,3.5) circle (2pt);
    \node[above] at (0,3.5) {$(2,-1,-1)$};
    \node[below right] at (2, 0) {$(0,1,-1)$};
	 \filldraw[blue](3,-1.75) circle (2pt);
	\filldraw[blue](3,1.75) circle (2pt);
   	\filldraw[blue](1,1.75) circle (2pt);
   	\filldraw[blue](-1,1.75) circle (2pt);
  \end{tikzpicture}
}
  \end{center}
Aside from the usual set of coordinates $(a,b,c)$ with $a+b+c=0$, we will be using the coordinates $(x,y)=(b-c,a)$ corresponding to the above picture. (Thus $(a,b,c)=(y,\frac{x-y}{2},-\frac{x+y}{2})$.)

We choose a set of positive roots; these are marked with (red and blue) circles. The compact roots are $(0,1,-1)$ and $(2,-1,-1)$ (red). The maximal compact group $K$ is a product of 
two $\SU(2)$ corresponding to the short and long compact roots. In particular
\[
\rho_c = (1,0,-1).
\]
Our goal is to describe the $K$-types of $A_{\mathfrak{q}}(\lambda)$'s. We use Vogan--Zuckerman \cite[Theorem 5.3]{VoganZuckerman} to determine the cone of $K$-types for a given pair $(\lambda,\mathfrak{q})$. We start with $\lambda = (a,b,c)$ in the positive Weyl chamber, and consider its Weyl orbit. 
\subsubsection{Regular $\lambda$}
\label{MKTG2_reg}
 For regular $\lambda$, there is only one choice of $\theta$-stable parabolic, and we get three different representations, corresponding to the Weyl group action on $\lambda$. Take $(a,b,c)$ such that $a>b>0$.
 
 \begin{figure}[h]
 	\begin{center}
 		\scalebox{0.5}{
 			\begin{tikzpicture}[
 				-{Straight Barb[bend,
 					width=\the\dimexpr10\pgflinewidth\relax,
 					length=\the\dimexpr12\pgflinewidth\relax]},
 				]
 				\foreach \i in {0, 1, ..., 5} {
 					\draw[thick] (0, 0) -- (\i*60:2);
 					\draw[thick] (0, 0) -- (30 + \i*60:3.5);
 				}
 				\draw[thick, red] (10,4*1.75) -- (9,5*1.75);
 				\draw[thick, red] (10,4*1.75) -- (13,3*1.75);
 				\draw[thick, green] (1,7*1.75) -- (-2,8*1.75);
 				\draw[thick, green] (1,7*1.75) -- (4,8*1.75);
 				\draw[thick, blue] (13,1*1.75) -- (14,2*1.75);
 				\draw[thick, blue] (13,1*1.75) -- (14,0);
 				\foreach \x in {0,1,2,3,4,5,6,7,8,9,10,11,12,13,14}
 				\foreach \y in {0,1,2,3,4,5,6,7,8}
 				\filldraw(\x,1.75*\y) circle (1pt);
 				\filldraw[green](1,3*1.75) circle (2pt);
 				\filldraw[red](4,2*1.75) circle (2pt);
 				\filldraw[blue](5,1.75) circle (2pt);
 				
 				\node[below left] at (1, 3.5*3.5) {\Large{B}};
 				\node[below left] at (10, 2*3.5) {\Large{A}};
 				\node[left] at (13, 0.5*3.5) {\Large{C}};	 
 		\end{tikzpicture}}
 	\end{center}
 	\caption{The cones of $K$-types for  regular $\lambda$. Pictured: $(a,b,c)=(2,1,-3)$ and $A_\mathfrak{q}(\lambda)$ modules A (red), B (green), and C (blue).}
 	\label{fig1}
 \end{figure}

\noindent \textbf{Rep A} $\lambda = (a,b,c)$. Here we have $\rho = (2,1,-3)$, $\rho(\mathfrak{u}\cap\mathfrak{p}) = (1,1,-2)$. The infinitesimal character is 
\[
\lambda + \rho = (a+2,b+1,c-3),
\] 
and the minimal type is 
\[
\mu = \lambda + 2\rho(\mathfrak{u}\cap\mathfrak{p}) = (a+2,b+2,c-4).
\]
In $(x,y)$-coordinates, this is $(b-c+6,a+2)$.

\noindent \textbf{Rep B} $\lambda = (-c,-b,-a)$. Here we have
$\rho =(3,-1,-2)$, $\rho(\mathfrak{u}\cap\mathfrak{p})=(2,-1,-1)$. The infinitesimal character is
\[
\lambda + \rho = (3-c,-b-1,-a-2)
\]
and the minimal type is
\[
\mu = \lambda + 2\rho(\mathfrak{u}\cap\mathfrak{p}) = (-c+4,-b-2,-a-2).
\]
In $(x,y)$-coordinates, this is $(a-b,-c+4)$.

\noindent \textbf{Rep C} $\lambda=(b,a,c)$. Here $\rho =(1,2,-3)$, $\rho(\mathfrak{u}\cap\mathfrak{p})=(0,2,-2)$, so the infinitesimal character is  
\[
\lambda + \rho = (b+1,a+2,c-3)
\]
and the minimal type is 
\[
\mu =  \lambda + 2\rho(\mathfrak{u}\cap\mathfrak{p}) =(b,a+4,c-4),
\]
i.e. $(a-c+8,b)$ in $(x,y)$-coordinates.

 \subsubsection{The wall $a=b$}
 \label{MKTG2_wall1}
 We consider the case $(a,b,c)=(a,a,-2a)$. We have multiple possibilities for $\frakq$, and we get four different representations.
 
 \begin{figure}[h]
 	\begin{center}
 		\scalebox{0.5}{
 			\begin{tikzpicture}[
 				-{Straight Barb[bend,
 					width=\the\dimexpr10\pgflinewidth\relax,
 					length=\the\dimexpr12\pgflinewidth\relax]},
 				]
 				\foreach \i in {0, 1, ..., 5} {
 					\draw[thick] (0, 0) -- (\i*60:2);
 					\draw[thick] (0, 0) -- (30 + \i*60:3.5);
 				}
 				\draw[thick, red] (9,3*1.75) -- (8,4*1.75);
 				\draw[thick, red] (9,3*1.75) -- (12,2*1.75);
 				\draw[thick, violet] (10,2*1.75) -- (11,3*1.75);
 				\draw[thick, violet] (10,2*1.75) -- (13,1*1.75);
 				\draw[thick, blue] (11,1*1.75) -- (12,2*1.75);
 				\draw[thick, blue] (11,1*1.75) -- (12,0);
 				\draw[thick, green] (0,6*1.75) -- (0,7*1.75);
 				\draw[thick, green] (0,6*1.75) -- (3,7*1.75);
 				\foreach \x in {0,1,2,3,4,5,6,7,8,9,10,11,12,13}
 				\foreach \y in {0,1,2,3,4,5,6,7}
 				\filldraw(\x,1.75*\y) circle (1pt);
 				
 				\node[below] at (0,6*1.75) {\Large{B}};
 				\node[below left] at (9,3*1.75) {\Large{A}};
 				\node[left] at (10,2*1.75) {\Large{AC}};	 
 				\node[left] at (11,1*1.75) {\Large{C}};	 
 		\end{tikzpicture}}
 	\end{center}
 	\caption{The cones of $K$-types for cases Ia and IIa on the wall $a=b$: the $A_\mathfrak{q}(\lambda)$ modules A (red), AC (purple), C (blue) with $\lambda = (1,1,-2)$, and B (green) with $\lambda =(2,-1,-1)$}
 	\label{fig2} 
 \end{figure}

\noindent \textbf{Case Ia:} $a = b>0$, i.e. $\lambda = (a,a,-2a)$. Here we have three possible $\mathfrak{q}$'s; accordingly, there are three possible representations. The infinitesimal character is
\[
(a+2,a+1,-2a-3)
\] 
and the minimal types (corresponding to different choices of $\mathfrak{q}$) are given by $ \lambda + 2\rho(\mathfrak{u}\cap\mathfrak{p})$:

\noindent \textbf{Rep A\phantom{C}} $\quad \mu = (a+2,b+2,c-4) =(a+2,a+2,-2a-4)$\\
\textbf{Rep AC}  $\quad \mu = (a+1, b+3, c-4) = (a+1,a+3,-2a-4)$\\
\textbf{Rep C\phantom{A}} $\quad \mu = (a, b+4, c-4) = (a,a+4,-2a-4)$.

\noindent In $(x,y)$-coordinates, this is
\[
(3a+6,a+2),\quad (3a+7,a+1), \quad (3a+8,a).
\]
   
\noindent \textbf{Case IIa:} $\lambda = (2a,-a,-a)$. We get one new representation; the infinitesimal character is 
\[
(2a+3,-a-1,-a-2)
\] 
and the minimal type

\noindent \textbf{Rep B}$\quad \mu = \lambda + 2\rho(\mathfrak{u}\cap\mathfrak{p}) = (2a+4,-a-2,-a-2)$.

\noindent In $(x,y)$-coordinates, this is $(0,2a+4)$.

 \subsubsection{The wall $b=0$}
 \label{MKTG2_wall2} We consider the case $(a,b,c)=(a,0,-a)$. Again, we get four representations. 
 
 \begin{figure}[h]
 	\begin{center}
 		\scalebox{0.55}{
 			\begin{tikzpicture}[
 				-{Straight Barb[bend,
 					width=\the\dimexpr10\pgflinewidth\relax,
 					length=\the\dimexpr12\pgflinewidth\relax]},
 				]
 				\foreach \i in {0, 1, ..., 5} {
 					\draw[thick] (0, 0) -- (\i*60:2);
 					\draw[thick] (0, 0) -- (30 + \i*60:3.5);
 				}
 				\draw[thick, green] (1,5*1.75) -- (-2,6*1.75);
 				\draw[thick, green] (1,5*1.75) -- (4,6*1.75);
 				\draw[thick, orange] (4,4*1.75) -- (3,5*1.75);
 				\draw[thick, orange] (4,4*1.75) -- (7,5*1.75);
 				\draw[thick, red] (7,3*1.75) -- (6,4*1.75);
 				\draw[thick, red] (7,3*1.75) -- (10,2*1.75);
 				\draw[thick, blue] (10,0) -- (11,0);
 				\draw[thick, blue] (10,0) -- (11,1.75);
 				\foreach \x in {0,1,2,3,4,5,6,7,8,9,10,11}
 				\foreach \y in {0,1,2,3,4,5,6}
 				\filldraw(\x,1.75*\y) circle (1pt);
 				
 				\node[below] at (1,5*1.75) {\Large{B}};
 				\node[below] at (4,4*1.75) {\Large{AB}};
 				\node[left] at (7,3*1.75) {\Large{A}};	 
 				\node[left] at (10,0) {\Large{C}};	 
 		\end{tikzpicture}}
 	\end{center}
 	\caption{The cones of $K$-types for cases Ib and IIb on the wall $b=0$. Here we see $A_\mathfrak{q}(\lambda)$ modules A (red), AB (orange), B (green) with $\lambda = (1,0,-1)$, and C (blue) with $\lambda =(0,1,-1)$.}
 	\label{fig3} 
 \end{figure}

\noindent \textbf{Case Ib:} $\lambda = (a,0,-a)$. We have three representations corresponding to different choices of $\mathfrak{q}$. The infinitesimal character is
\[
(a+2,1,-a-3)
\] 
and the minimal types are

\noindent
\textbf{Rep A\phantom{B}} $\quad \mu = (a+2,2,-a-4)$\\
	\textbf{Rep AB}  $\quad \mu = (a+3,0,-a-3)$\\
	\textbf{Rep B\phantom{A}} $\quad \mu =(a+4,-2,-a-2)$.

\noindent In $(x,y)$-coordinates, these are
\[
(a+6,a+2)\quad (a+3,a+3), \quad (a,a+4).
\]

\noindent \textbf{Case IIb:} $\lambda = (0,a,-a)$. As before, we get only one other representation. The infinitesimal character is
\[
(1,a+2,-a-3)
\] 
and the minimal type is

\noindent \textbf{Rep C} $\quad \mu=(0, a+4, -a-4)$,

\noindent that is, $(2a+8,0)$ in $(x,y)$-coordinates.

\subsection{$K$-types for $\PU(2,1)$}
\label{MKTU2}
We now do the same for $\PU(2,1)$.
\begin{center}
  \begin{tikzpicture}[
  ]
    \foreach \i in {0, 1, ..., 5} {
      \draw[thick,->] (0, 0) -- (\i*60:2);
    }
    \draw[dashed] (3,1.75) to (-3,-1.75);
    \draw[dashed] (6,-3.5) to (-6,3.5);
    \draw[dashed] (0,-3.5) to (0,3.5);
	\foreach \x in {-5,-4,-3,-2,-1,0,1,2,3,4,5}
	\filldraw(\x,5.25) circle (1pt);
	\foreach \x in {-5,-4,-3,-2,-1,0,1,2,3,4,5}
	\filldraw(\x,3.5) circle (1pt);
	\foreach \x in {-3,-2,-1,0,1,2,3,4,5}
	\filldraw(\x,1.75) circle (1pt);
	\foreach \x in {0,1,2,3,4,5}
	\filldraw(\x,0) circle (1pt);
	\foreach \x in {3,4,5}
	\filldraw(\x,-1.75) circle (1pt);
    \filldraw[red](1,1.75) circle (2pt);
    \node[above] at (1,1.75) {$(1,0,-1)$};
        \filldraw[blue](2,0) circle (2pt);
       \filldraw[blue](-1, 1.75) circle (2pt);
    \node[below right] at (2, 0) {$(0,1,-1)$};
        \node[above left] at (-1, 1.75) {$(1,-1,0)$};
  \end{tikzpicture}
  \end{center}
Again, we have the usual set of coordinates $(a,b,c)$ with $a+b+c=0$, as well as the coordinates used in the picture: $(x,y)=(b-c,a)$. (Thus $(a,b,c)=(y,\frac{x-y}{2},-\frac{x+y}{2})$.) When talking about $K$-types, $(a,b,c)$ corresponds to $(a,c)$ in the standard notation for $\rU(2)$ highest weights. We want to describe the $K$-types of $A_{\mathfrak{q}}(\lambda)$'s for any given $\lambda$ in the positive $K$-chamber. Since we have at most three additional representations with the same infinitesimal character, we include them in the picture.

Take $(a,b,c)$ such that $a>b>c$.

\begin{figure}[h]
	\begin{center}
		\scalebox{0.6}{
			\begin{tikzpicture}
				\foreach \i in {0, 1, ..., 5} {
					\draw[thick,->] (0, 0) -- (\i*60:2);
				}
				\draw[dashed] (3,1.75) to (-3,-1.75);
				\draw[dashed] (6,-3.5) to (-6,3.5);
				\draw[dashed] (0,-3.5) to (0,3.5);
				\foreach \x in {-6,-5,-4,-3,-2,-1,0,1,2,3}
				\filldraw(\x,5.25) circle (1pt);
				\foreach \x in {-5,-4,-3,-2,-1,0,1,2,3,4}
				\filldraw(\x,3.5) circle (1pt);
				\foreach \x in {-3,-2,-1,0,1,2,3,4,5}
				\filldraw(\x,1.75) circle (1pt);
				\foreach \x in {0,1,2,3,4,5,6}
				\filldraw(\x,0) circle (1pt);
				\foreach \x in {3,4,5, 6,7}
				\filldraw(\x,-1.75) circle (1pt);
				\filldraw[black](1,1.75) circle (1pt);
				\filldraw[black](2,0) circle (1pt);
				\filldraw[black](-1, 1.75) circle (1pt);
				\draw[thick, green] (-4+0.6, 3.5-0.35) to (-5-0.2,3.5-0.35);
				\draw[thick,green, ->] (-5-0.2,3.5-0.35) -- (-6.2 - 0.3, 5.25+0.175);
				\draw[thick,green, ->] (-4+0.6, 3.5-0.35) -- (-5+0.6 - 0.3,5.25 + 0.175);
				\draw[thick,orange,->] (-2-0.2, 3.5-0.35) -- (-3-0.2-0.3,5.25+0.175);
				\draw[thick,orange] (-2-0.2, 3.5-0.35) to (0.6,3.5-0.35);
				\draw[thick, orange, ->] (0.6,3.5-0.35) -- (-0.7, 5.25+0.175);
				\draw[thick,red,->] (2-0.2, 3.5-0.35) -- (1-0.2-0.3,5.25+0.175);
				\draw[thick,red,->] (2-0.2, 3.5-0.35) -- (4+0.2,3.5-0.35);
				\draw[thick,violet,->] (3-0.6, 1.75+0.35) -- (5,1.75+0.35);
				\draw[thick,violet,->] (3.8, -0.35) -- (6.2,-0.35);
				\draw[thick, violet] (3-0.6, 1.75+0.35) to (3.8, -0.35);
				
				\draw[thick,blue,->] (4.4, -1.75+0.35) -- (6.8,-1.75+0.35);
				\draw[thick,blue] (4.4, -1.75+0.35) to (4.8,-1.75-0.35);
				\draw[thick, blue,->] (4.8, -1.75-0.35) -- (7.2,-1.75-0.35);
				\draw[thick] (-1-0.6, 1.75+0.35) to (3.8-4, -0.35);
				\draw[thick] (-1-0.6, 1.75+0.35) to (1.2, 1.75+0.35);
				\draw[thick] (1.2, 1.75+0.35) to (2.6,-0.35);
				\draw[thick] (-0.2, -0.35) to (2.6,-0.35);
				
				\filldraw(0,3.5) circle (3pt);
				\node[above left] at (0,3.5) {\Large{AB}};
				\filldraw(-4,3.5) circle (3pt);
				\node[above left] at (-4,3.5) {\Large B};
				\filldraw(2,3.5) circle (3pt);
				\node[above right] at (2,3.5) {\Large A};				
				\filldraw(3,1.75) circle (3pt);				
				\node[below right] at (3,1.75) {\Large AC};					
				\filldraw(5,-1.75) circle (3pt);
				\node[right] at (5,-1.75) {\phantom{I}\Large C};					
				\filldraw(1,1.75) circle (3pt);
				\node[left] at (1,1.75) {\Large F\phantom{I}};					
				
			\end{tikzpicture}
		}
	\end{center}
	\caption{The $K$-types for $(a,b,c)=(1,0,-1)$. The one thickened dot in each representation is the type $\tau$ that we lift in Section \ref{S:correspondence}.}
	\label{fig4}
\end{figure}

 \noindent The representations labeled A,B,C in Figure \ref{fig4} are $A_{\mathfrak q}(\lambda)$ modules; we list their minimal types (depicted as thickened dots):

\noindent \textbf{Rep A} $\lambda=(a,b,c)$. Here we have $\rho = (1,0,-1)$ and $2\rho(\mathfrak{u}\cap\mathfrak{p})=(1,0,-1)$. The infinitesimal character is 
\[
\lambda + \rho = (a+1,b,c-1),
\] 
and the minimal type is 
\[
\mu = \lambda + 2\rho(\mathfrak{u}\cap\mathfrak{p}) = (a+1,b,c-1).
\]
In $(x,y)$-coordinates, this is $(b-c+1,a+1)$. In standard $\rU(2)$ coordinates, this is $(a+1,c-1)$.

\noindent \textbf{Rep B} $\lambda = (a,c,b)$. Here we have $\rho = (1,-1,0)$ and $2\rho(\mathfrak{u}\cap\mathfrak{p})=(1,-2,1)$. The infinitesimal character is 
\[
\lambda + \rho = (a+1,c-1,b),
\] 
and the minimal type is 
\[
\mu = \lambda + 2\rho(\mathfrak{u}\cap\mathfrak{p}) = (a+1,c-2,b+1).
\]
In $(x,y)$-coordinates, this is $(c-b-3,a+1)$. In standard $\rU(2)$ coordinates, this is $(a+1, b+1)$.

\noindent \textbf{Rep C} $\lambda=(b,a,c)$. Here we have $\rho = (0,1,-1)$ and $2\rho(\mathfrak{u}\cap\mathfrak{p})=(-1,2,-1)$. The infinitesimal character is 
\[
\lambda + \rho = (b,a+1,c-1),
\] 
and the minimal type is 
\[
\mu = \lambda + 2\rho(\mathfrak{u}\cap\mathfrak{p}) = (b-1,a+2,c-1).
\]
In $(x,y)$-coordinates, this is $(a-c+3,b-1)$. In standard $\rU(2)$ coordinates, this is $(b-1,c-1)$.

\noindent It is now easy to list the types for representations AB, AC, and F

\noindent \textbf{Rep AB} Depending on parity, the minimal type is either the midpoint between the minimal types of A and B, or the point next to it (as in Figure \ref{fig4}). In $(x,y)$ coordinates, this is
\[
\begin{cases}
	(-1, a+1) \quad &\text{if }a \text{ is even};\\
	(0, a+1) \quad &\text{if }a \text{ is odd}.
\end{cases}
\]
In standard $\rU(2)$ coordinates, this is
\[
\begin{cases}
	(a+1, -\frac a2) \quad &\text{if }a \text{ is even};\\
	(a+1, -\frac{a+1}{2}) \quad &\text{if }a \text{ is odd}.
\end{cases}
\]
\noindent \textbf{Rep AC} Similar to AB, the minimal type in this case is (almost) the midpoint between minimal types of A and C, depending on parity:
\[
\begin{cases}
	(2- \frac{3c}{2}, -\frac{c}{2}) \quad &\text{if }c \text{ is even};\\
	(\frac{3-3c}{2}, \frac{1-c}{2}) \quad &\text{if }c \text{ is odd}.
\end{cases}
\]
In standard $\rU(2)$ coordinates, this is
\[
\begin{cases}
	(-\frac{c}{2}, c-1) \quad &\text{if }c \text{ is even};\\
	(\frac{1-c}{2}, c-1) \quad &\text{if }c \text{ is odd}.
\end{cases}
\]
\noindent \textbf{Rep F} The finite-dimensional representation always contains the type obtained from the minimal type of Rep A by adding $(-1,-1)$ (in $(x,y)$ coordinates). Thus we get $(b-c,a)$ in $(x,y)$ coordinates, which is $(a, c)$ in standard $\rU(2)$ coordinates.

\noindent These computations still hold when $\lambda$ is on the wall. Just like in \S \ref{MKTG2}, when $\lambda$ is on the wall we get one additional $A_\mathfrak{q}(\lambda)$: AC when $a=b$, and AB when $b=c$.

\section{The correspondence}
\label{S:correspondence}
In this section we use the results obtained above to deduce fine-grained information about the correspondence between $\PU(2,1)$ and $G_2$.
Our main goal is to compute $\Theta(\pi')$ for all representations $\pi'$ of $G_2$ listed in \S\ref{MKTG2}. Given a representation $\pi'$ of $G_2$, our strategy is to work out  $\dim\Hom_{K'}(F_{\tau}, \pi')$ for suitably chosen types $\tau$.  

Recall, from Section  \ref{S:D4}, that the minimal representation is a module for the disconnected group $\rE_{6,4} \rtimes \langle \iota \rangle$. 
The centralizer of $G_2$ in $\rE_{6,4} \rtimes \langle \iota \rangle$
 is $\PU(2,1) \rtimes \langle \iota \rangle$ where $\iota$ acts by complex conjugation on $\PU(2,1)$.  
Thus $\Theta(\pi')$ is in fact a representation of the disconnected group. Its infinitesimal character is determined by the infinitesimal character of $\pi'$. More precisely, 
we can define the center of the enveloping algebra of $\PU(2,1) \rtimes \langle \iota \rangle$ to be the $\iota$-invariant elements of the usual center of the enveloping algebra of 
$\PU(2,1)$. It is isomorphic to the center of the enveloping algebra of $G_2$.  Assume that the infinitesimal character of $\pi'$ is $(a+2,b+1,c-3)$. 
Since the correspondence of infinitesimal characters is the identity with our choice of coordinates (see Remark \ref{R:tripoints}), the infinitesimal character of $\Theta(\pi')$ is $(a+2,b+1,c-3)$. 
The center of the enveloping algebra of $\PU(2,1)$ is slightly larger and we can decompose 
\[ 
\Theta(\pi') = \Theta(\pi')^+ \oplus \Theta(\pi')^- 
\] 
so that the center of the enveloping algebra of $\PU(2,1)$  acts by infinitesimal characters $(a+2,b+1,c-3)$ and $(-c-3,-b-1,-a-2)$, respectively, on the two summands. 
 Each of the two summands is a $\PU(2,1)$-module, and $\iota$ permutes the two. Clearly, it suffices to determine $\Theta(\pi)^+$.

 Let $\lambda = (a,b,c)$. The infinitesimal character of $\pi'=A_\mathfrak{q}(\lambda)$ on the $G_2$ side is $(a+2,b+1,c-3)$. Thus, constituents of $\Theta(\pi')^+$  are 
 representations of $\PU(2,1)$ with the infinitesimal character $(a+2,b+1,c-3)$.   (Subtracting $\rho$, this means we have $\lambda = (a+1,b+1,c-2)$ for the $A_\mathfrak{q}(\lambda)$ modules of $\PU(2,1)$ 
 possibly appearing here.)
For each of the six representations with this infinitesimal character, we take the type $\tau$ listed in \S\ref{MKTU2}, compute $F_\tau$ using Theorem \ref{Tmain}, and finally, compute the restriction of $F_\tau$ to $K_2$. This will produce a segment of $K_2$ types parallel to the $x$-axis in Figure \ref{fig1}. We list these types in Table \ref{table1}:

\renewcommand{\arraystretch}{1.5}
\begin{table}[h]
\begin{center}
		\caption{The map $\tau \mapsto F_\tau$ for types $\tau$ listed in \S\ref{MKTU2}}
				\label{table1}
	\begin{tabular}{ |c|c|c|c| } 
		\hline 
		Rep & $\tau$ & $F_\tau$ & Restriction of $F_\tau$ to $K_2$\\
		\hline
		A & $(a+2,c-3)$ & $(5-c, a+2, b+1)$ & $(4+a,a+2) \dots (6+b-c,a+2)$\\
		B & $(a+2,b+2)$ & $(6+a+b, 0 , a-b)$ & $(6+2b,0) \dots (6+2a,0)$\\ 
		C & $(b,c-3)$ & $(5-c,b,a+3)$ & $(2+b,b)\dots(8+a-c,b)$\\ 
		\multirow{2}{1.5em}{AB} & $(a+2, -\frac{a+1}{2})$ & $(4+a, \frac{a+1}{2}, \frac{a+3}{2})$ & $(\frac{a+5}{2},\frac{a+3}{2})\dots(\frac{3a+11}{2},\frac{a+3}{2})$\\ 
		 & $(a+2, -\frac{a+2}{2})$ & $(4+a, \frac{a+2}{2}, \frac{a+2}{2})$ & $(\frac{a+6}{2}, \frac{a+2}{2}) \dots (\frac{3a+10}{2},\frac{a+2}{2})$ \\ 
		\multirow{2}{1.5em}{AC} & $(\frac{2-c}{2}, c-3)$ & $(5-c,\frac{2-c}{2}, \frac{4-c}{2} )$ & $(\frac{6-c}{2},\frac{2-c}{2}) \dots (\frac{14-3c}{2},\frac{2-c}{2})$\\ 
		 & $(\frac{3-c}{2}, c-3)$ & $(5-c,\frac{3-c}{2}, \frac{3-c}{2} )$ & $(\frac{7-c}{2},\frac{3-c}{2})\dots(\frac{13-3c}{2},\frac{3-c}{2})$ \\ 
		F & $(a+1,c-2)$ & $(4-c, a+1, b+1)$ & $(3+a, a+1)\dots (5+b-c,a+1)$\\ 		
		\hline
	\end{tabular}
\end{center}
\end{table}

\noindent We are now ready to compute the lifts:

\begin{thm}\label{G2toPU3regular} For $X \in \{A,B,C,AB,AC\}$, let $\pi_X'$ denote the $A_\mathfrak{q}(\lambda)$ module of $G_2$ from \S\ref{MKTG2_reg}; similarly, let $\pi_X$ denote the corresponding representations of $\PU(2,1)$ from \S\ref{MKTU2}. Then 
	\begin{itemize}
	\item For $X \in \{A,C,AC\}$, we have $\Theta(\pi'_X)\cong \pi_X \oplus\pi_X^{\iota}$, if non-zero.
	\item $\Theta(\pi'_B)=0$ and $\Theta(\pi'_{AB})=0$.
	\end{itemize}
\end{thm}

In the first bullet, since $\pi_X$ is not isomorphic to its complex conjugate $\pi_X^{\iota}$, we see that  $\Theta(\pi'_X)$ is an irreducible $\PU(2,1)\rtimes\langle \iota\rangle$-module, if not zero. 
We summarize the correspondence in Table \ref{table3} below:
\begin{table}[h]
	\caption{The lifts. Row AB happens when $b=0$, row AC when $a=b$.}
	\label{table3}
	\begin{center}
		\begin{tabular}{ |c|c|c|c| } 
			\hline 
			Rep $\pi'$ of $G_2$ & Minimal type of $\pi_X'$ & $\Theta(\pi_X')^+$, if non-zero & Minimal type of $\Theta(\pi_X')^+$\\
			\hline
			$\pi_A'$ & $(6+b-c,a+2)$ & $\pi_A$ & $(a+2,c-3)$\\
			$\pi_B'$ & $(a-b,4-c)$ & $-$ & $-$\\			
			$\pi_C'$ & $(8+a-c,b)$ & $\pi_C$ & $(b,c-3)$\\			
			$\pi_{AB}'$ & $(a+3, a+3)$ & $-$ & $-$\\ 
			$\pi_{AC}'$ & $(3a+7, a+1)$ & $\pi_{AC}$ & $(a+1, -a-3)$\\ 
			\hline
		\end{tabular}
	\end{center}
\end{table}
\begin{proof}[Proof of Theorem \ref{G2toPU3regular}]
	The proof is a simple application of Theorem \ref{T:introduction}:
		\[
		\dim \Hom_K(\Theta(\pi'), \tau) \leq \dim \Hom_{K'} (F_{\tau}, \pi'). 
		\]
	We begin by lifting $\pi'_A$. From $\S \ref{MKTG2_reg}$ we know its minimal type,
	\[
	(6+b-c,a+2).
	\]
	We know that constituents of $\Theta(\pi'_A)^+$ are representations from \S \ref{MKTU2}, listed in Table \ref{table1}. Taking $\tau=\tau_A$, we observe that the highest type contained in $F_\tau$ is precisely the minimal type of $\pi_A$. Thus
	\[
	\dim \Hom_K(\Theta(\pi'_A), \tau_A) \leq \dim \Hom_{K'} (F_{\tau_A}, \pi'_A) = 1,
	\]
	which shows $\dim \Hom_K(\Theta(\pi'_A), \pi_A) \leq  1$.
	
	\noindent To eliminate other representations, we simply show that $\dim \Hom_{K'} (F_{\tau}, \pi'_A)=0$ for other $\tau$'s in the table above. To that end, we recall the cone of $K$-types for $\pi_A'$ (see Figure \ref{fig4}). It is determined by lines
	\[
 	k\dots\ x - (6+b-c) + 3(y-(a+2)) = 0 \quad \text{and} \quad l\dots\ x - (6+b-c) + y-(a+2) = 0.
	\]
	We now observe:
	\begin{itemize}
	\item $\tau = \tau_B$, the maximal type in $F_\tau$ is below the line $k$. Indeed,
	\[
	6+2a - (6+b-c) - (a+2) = a-b+c -2  = -2b-2 < 0.
	\]
	\item $\tau = \tau_C$, the maximal type in $F_\tau$ is below the line $l$:
	\[
	8+a-c - (6+b-c) +3(b- (a+2)) = 2+a-b +3b-3a-6  = -2a+2b-4 < 0.
	\]
	\item $\tau = \tau_{AB}$, the maximal type in $F_\tau$ is below the line $l$:
	\[
	\frac{3a+10}{2} - (6+b-c) +3(\frac{a+2}{2}- (a+2)) = \frac{3a}{2} - b+ c -1  - \frac{3a}{2} -3 = -b+c-4 < 0.
	\]
	(the computation for $a$ odd is similar).
	\item $\tau = \tau_{AC}$, the maximal type in $F_\tau$ is below the line $l$:
	\[
	\frac{14-3c}{2} - (6+b-c) +3(\frac{2-c}{2}- (a+2)) = 1 - \frac{c}{2} - b  - \frac{3c}{2} -3a -3 = -a+b-2 < 0.
	\]
	(the computation for $c$ odd is similar).
	\item $\tau = \tau_{F}$, the maximal type in $F_\tau$ is below the line $k$:
	\[
	5+b-c - (6+b-c) a+1 - (a+2) = -2 < 0.
	\]
	\end{itemize}
	To summarize, each of the remaining candidate representations contains a type $\tau$ for which $\dim \Hom_{K'} (F_{\tau}, \pi'_A)=0$. This proves the claim for $\pi_A'$. The lifts of the remaining representations are entirely analogous, so we leave them as an exercise for the reader, and refer to Figure \ref{fig7} for a visual explanation of the proof.
\end{proof}

\begin{figure}[h]
	\includegraphics[scale=0.1]{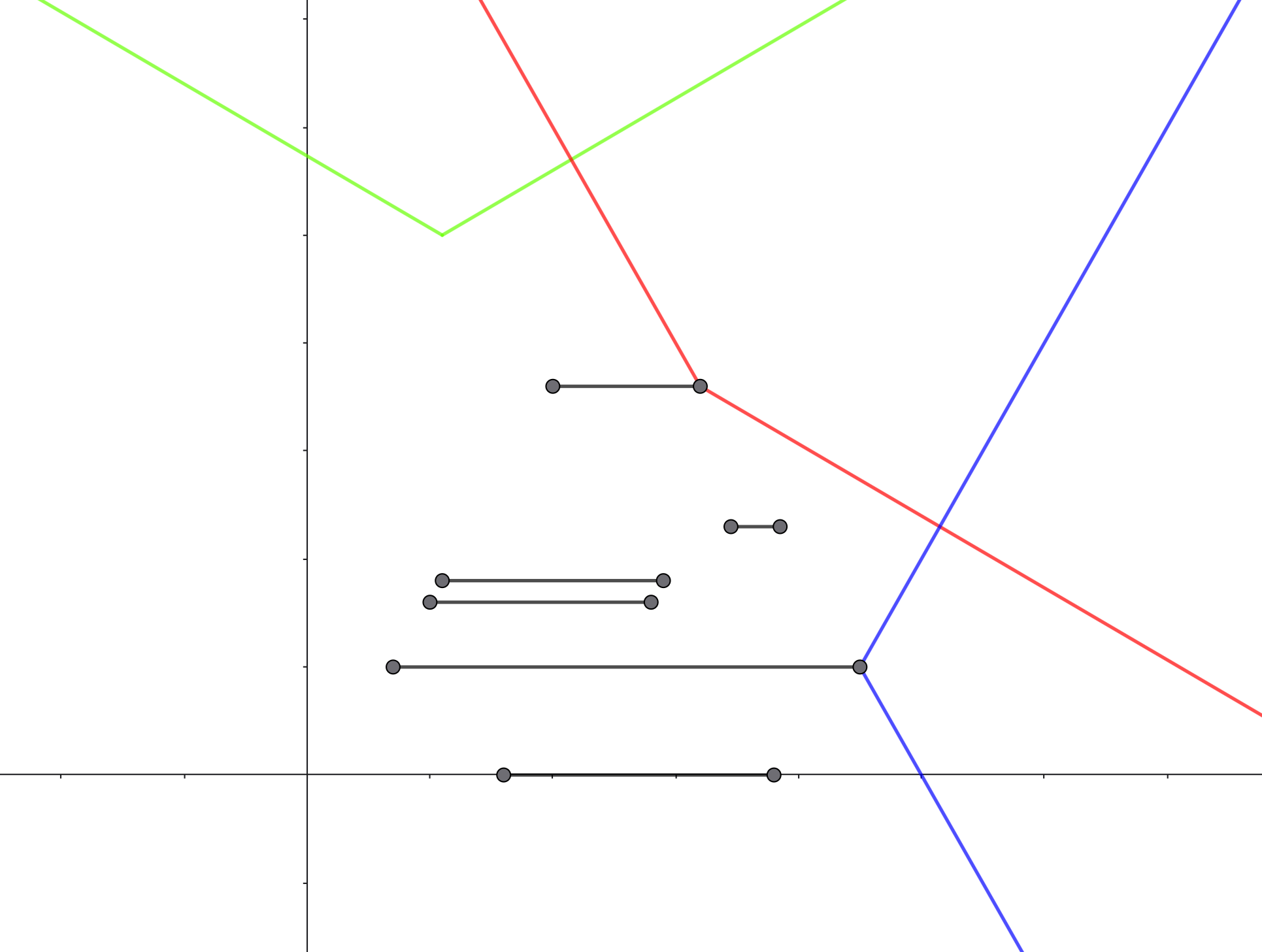}
	\caption{A 'visual proof' of Theorem \ref{G2toPU3regular}: Most $F_\tau$'s miss the cones entirely, whereas $F_{\tau_A}$ (resp.\ $F_{\tau_C}$) intersects the cone of $\pi_A'$ (resp.\ $\pi_C'$) precisely in the minimal type.}
	\label{fig7}
\end{figure}

\begin{rmk} \label{R:tripoints} 
The $G_2$ representation B (which does not appear in this correspondence) appears as a theta lift from the compact group $\PU(3)$, as shown by \cite[Theorem 5.2]{HPS}. 
Indeed, the representation B with minimal type  $(a-b,2-c)$ is the theta lift of the finite-dimensional representation of $\PU(3)$ with the highest weight $(a,b,c)$, 
and its complex conjugate, the finite-dimensional representation of $\PU(3)$ with the highest weight $(-c,-b,-a)$. 
 This fact alone fixes the correspondence of infinitesimal characters between $G_2$ and $\PU(3) \rtimes \langle \iota \rangle$: 
 it is the identity in our coordinates, as claimed. The same is true for the $\PU(2,1)$ representations of type B, they do not show up in this correspondence, 
 but come from the compact $G_2$.  
\end{rmk}

The same mapping $\tau \mapsto F_\tau$ of minimal types that we used in Theorem \ref{G2toPU3regular} can be used to establish the correspondence of $A_\mathfrak{q}(\lambda)$ modules in the other direction. Indeed, let $\pi$ be an $A_\mathfrak{q}(\lambda)$ module on the $\PU(2,1)$ side. If we assume that the lift is unitarizable, then it has to be one of the $A_\mathfrak{q}(\lambda)$ modules on the $G_2$ side. As we saw earlier, the restriction of $F_\tau$ to the maximal compact group of $G_2$ intersects at most one of the cones (see Figure \ref{fig7}), which determines the lift. Rather than stating exhaustive results, we state a sample result in the interesting case when $\lambda$ is on the wall $a=b$:

\begin{thm} \label{T:main_wall} Let $\lambda = (a,b,c)=(a,a,-2a)$ with $a>0$. 
For $X \in \{A,B,C,AC\}$, let $\pi_X$ denote the $A_\mathfrak{q}(\lambda)$ module of $\PU(2,1)$ from \S\ref{MKTU2}. Similarly, let $\pi_X'$ denote the corresponding representations of $G_2$ from \S\ref{MKTG2_wall1} with $\lambda = (a-1, a-1,2-2a)$. Let $\theta(\pi)$ denote the maximal semi-simple and unitarizable quotient of $\Theta(\pi)$. 
	\begin{itemize}  
	\item For $X \in \{A,C,AC\}$, we have $\theta(\pi_X) \cong \pi_X'$.
	\item $\theta(\pi_B)=0$. 
	\end{itemize} 
	\end{thm}

\section{Appendix - some branching rules} \label{S:branching} 

In this section we gather some useful branching rules for classical groups, the known branching from $\rSp(n)$ to $\rSp(n-1) \times \rSp(1)$ and a 
remarkably similar branching from $\Spin(n+1)$ to $\Spin(n-1) \times \Spin(2)$ which appeared in \cite{thesis}. We also include a result on the branching 
from $F_4$ to $B_4$ that we have derived as a consequence of results in this paper. 
Remarkably, this branching is still open, that is, there is no formula that does not involve an alternating sum of large numbers with huge cancelations. 
In this section $\Lambda(G)$ denotes the set of highest weights of finite dimensional representations  of the group $G$.

\subsection{Branching rule from $\rSp(n)$ to $\rSp(n-1) \times \rSp(1)$} 

This formulation is due to Wallach--Yacobi \cite{WY}.  Let $\lambda \in \Lambda(\rSp(n))$. Then 
$\lambda=(x_1, \ldots , x_n)$ where $x_1, \ldots , x_n$ is a descending sequence of non-negative integers.  
 Let $(\lambda)$ denote the finite dimensional representation of  $\rSp(n)$ with that highest weight. 
Let $\mu=(y_1, \ldots , y_{n-1}) \in \Lambda(\rSp(n-1))$. Then $(\mu)$ appears in the restriction of $(\lambda)$ if and only if $\mu$ 2-step interlaces $\lambda$, 
that is, 
\[ 
x_1 \geq y_1 \geq x_3, \, x_2 \geq y_2 \geq x_4, \ldots, x_{n-1} \geq y_{n-1} \geq 0. 
\] 
Let 
\begin{equation} \label{E:ordering} 
z_1 \geq z_2 \geq \ldots \geq z_{2n-1}
\end{equation} 
be the ordering of $x_i$ and $y_j$, that is, $z_1=x_1$, $z_2=y_1$ or $x_2$, depending which one is greater etc.  

\begin{prop} \label{P:C_n branching} 
Let $\lambda \in \Lambda(\rSp(n))$ and $\mu \in \Lambda(\rSp(n-1))$.  Assume that $\mu$ 2-step interlaces $\lambda$. Then, 
as $\rSp(1)$-modules, 
\[ 
\Hom_{\rSp(n-1)} ( (\mu), (\lambda)) \cong (z_1-z_2) \otimes (z_3-z_4) \otimes \cdots \otimes (z_{2n-1})  
\] 
where $z_i$ are as in (\ref{E:ordering}).  
\end{prop}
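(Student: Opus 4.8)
The plan is to reduce the statement to an identity of $\rU(1)$-characters. Since the factor $\rSp(1)$ commutes with $\rSp(n-1)$ inside $\rSp(n)$, the multiplicity space $\Hom_{\rSp(n-1)}((\mu),(\lambda))$ carries a natural $\rSp(1)$-module structure, and a finite-dimensional $\rSp(1)\cong\SU(2)$-module is determined up to isomorphism by its character on the diagonal torus $\rU(1)\subset\rSp(1)$, since the characters $\chi_{(m)}(t)=t^{m}+t^{m-2}+\cdots+t^{-m}$ are linearly independent Laurent polynomials. Setting $z_{2n}:=0$ so that the last factor in the claim reads $(z_{2n-1})=(z_{2n-1}-z_{2n})$, it is therefore enough to establish
\[
\mathrm{ch}_{\rU(1)}\,\Hom_{\rSp(n-1)}((\mu),(\lambda))\;=\;\prod_{i=1}^{n}\chi_{(z_{2i-1}-z_{2i})}(t),
\]
where the left side is $\sum_{w\in\bbZ} m_w\, t^{w}$ and $m_w$ is the multiplicity of $(\mu)$ with $\rU(1)$-weight $w$ in the restriction $(\lambda)|_{\rSp(n-1)\times\rU(1)}$.

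Next I would invoke the symplectic Gelfand--Tsetlin branching rule for the chain $\rSp(n)\supset\rSp(n-1)\times\rU(1)$, this last group being the Levi of the stabilizer of an isotropic line (see Zhelobenko, or Molev's book, or Proctor). It expresses $m_w$ as the number of integer Gelfand--Tsetlin patterns squeezed between $\lambda$ and $\mu$ by a chain of interlacing inequalities (passing through the odd symplectic group), the weight $w$ being a fixed affine--linear function of the pattern entries; one fixes the normalization once and for all by comparing the case $n=2$ with the evident decomposition $\bbC^{2n}=\bbC^{2n-2}\oplus\bbC^{2}$ of the defining representation. This turns the left side of the displayed identity into a single lattice-point generating function $\sum_{\text{patterns}} t^{w(\text{pattern})}$.

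The crux is then a combinatorial factorization of that generating function. The point is that, after a linear reparametrisation, each pattern entry runs independently over an interval whose two endpoints form a consecutive pair $\{z_{2i-1},z_{2i}\}$ of the merged, sorted sequence $z_1\ge z_2\ge\cdots\ge z_{2n-1}$, while the exponent $w$ becomes additive across these entries; consequently the sum splits as a product of $n$ one-variable geometric-type sums, the $i$-th of which is exactly $t^{z_{2i-1}-z_{2i}}+t^{z_{2i-1}-z_{2i}-2}+\cdots+t^{-(z_{2i-1}-z_{2i})}=\chi_{(z_{2i-1}-z_{2i})}(t)$. Matching this product with the right-hand side of the display completes the proof. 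I expect the decoupling in this last step to be the only real obstacle: it requires arranging the interlacing inequalities so that the sum genuinely separates pair-by-pair --- which is precisely where the hypothesis that $\mu$ two-step interlaces $\lambda$, and the sorted-merge definition of the $z_j$, are used --- together with careful bookkeeping of the weight function so that no spurious global shift survives. (Alternatively one can run an induction on $n$ through the intermediate group $\rSp(n-1)\times\rSp(1)$, with base case $\Hom_{\rSp(0)}((\lambda),(\lambda))=(x_1)=(z_1)$, but the character computation above seems the most transparent.)
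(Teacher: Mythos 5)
The paper does not actually prove this proposition: it is quoted verbatim from Wallach--Yacobi \cite{WY}, so there is no internal argument to compare against, and your proposal should be judged as a proof from scratch. As such it is correct in outline, and the ``crux'' you isolate does go through. Concretely, Zhelobenko's rule says that the multiplicity of $(\mu)$ in $(\lambda)|_{\rSp(n-1)}$ is the number of integer vectors $\nu=(\nu_1,\dots,\nu_n)$ with $\lambda_i\ge\nu_i\ge\lambda_{i+1}$ and $\mu_{i-1}\ge\nu_i\ge\mu_i$ (conventions $\mu_0=+\infty$, $\mu_n=\lambda_{n+1}=0$), and the refinement through $\rSp(n-1)\times\rU(1)$ (Proctor's odd symplectic groups) assigns to $\nu$ the torus weight $2|\nu|-|\lambda|-|\mu|$. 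The two-step interlacing hypothesis forces $\min(\lambda_i,\mu_{i-1})=z_{2i-1}$ and $\max(\lambda_{i+1},\mu_i)=z_{2i}$ (with $z_{2n}:=0$), so the constraints decouple into independent intervals $\nu_i\in[z_{2i},z_{2i-1}]$, the weight splits as $\sum_i(2\nu_i-z_{2i-1}-z_{2i})$, and each factor of the generating function sums to $t^{z_{2i-1}-z_{2i}}+\dots+t^{-(z_{2i-1}-z_{2i})}=\chi_{(z_{2i-1}-z_{2i})}(t)$, exactly as you predicted. The one soft spot is your claim that the weight function is pinned down by ``comparing the case $n=2$'': an affine--linear function of $n$ independent pattern entries is not determined by one data point, so you must either quote the weight formula from the literature or derive it from the standard torus-weight formula for symplectic Gelfand--Tsetlin patterns. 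For comparison, Wallach--Yacobi argue in the opposite direction --- they first establish a product formula for multiplicities in tensor products of several $\SL(2)$-modules and then match it against the known branching multiplicities --- whereas you factor the branching generating function directly; your version has the advantage of making the role of the two-step interlacing condition (as the exact condition for the intervals to be nonempty and to decouple) completely transparent.
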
 

\subsection{Branching rule from $\Spin(2n+1)$ to $\Spin(2n-1) \times \Spin(2)$} 
Here we have branching rule which is remarkably similar to the one for $\rSp(n)$ groups.  Let $\lambda \in \Lambda(\Spin(2n+1))$. Then 
  $\lambda=(x_1, \ldots , x_n)$ where $x_1, \ldots , x_n$ is a descending sequence of non-negative half 
 integers, however, $x_i \equiv x_j \pmod{\mathbb Z}$ for any two indices $i$ and $j$.  
 Let $(\lambda)$ denote the finite dimensional representation of  $\Spin(2n+1)$ with that highest weight. 
Let $\mu=(y_1, \ldots , y_{n-1})\in \Lambda(\Spin(2n-1))$. Then $(\mu)$ appears in the restriction of $(\lambda)$ if and only if 
$x_i\equiv y_j \pmod{\mathbb Z}$ and  $\mu$ 2-step interlaces $\lambda$. 
Let 
\[ 
z_1 \geq z_2 \geq \ldots \geq z_{2n-1}
\] 
be the ordering of $x_i$ and $y_j$.  Recall that the weights for $\Spin(2)$ are half-integers.  For $a\geq 0$, a half integer,  define a  $\Spin(2)$-module 
\[ 
A(a)=(a) + (a-1) + \ldots +(-a), 
\] 
 and for an integer $b\geq 0$, another $\Spin(2)$-module
 \[ 
  B(z) =  (b) + (b-2) + \ldots +(-b).  
  \]  
Then we have \cite{thesis}. 

\begin{prop} \label{P:B_n branching} 
Let $\lambda\in \Lambda(\Spin(2n+1))$ and $\mu\in \Lambda(\Spin(2n-1))$. Assume that $x_i\equiv y_j \pmod{\mathbb Z}$  and 
$\mu$ 2-step interlaces $\lambda$. Then, as $\Spin(2)$-modules, 
\[ 
\Hom_{\Spin(2n-1)} ( (\mu), (\lambda)) \cong B(z_1-z_2) \otimes B(z_3-z_4)\otimes  \cdots \otimes A(z_{2n-1}). 
\] 
\end{prop}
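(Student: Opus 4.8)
The plan is to separate the statement into two parts: the $\Spin(2n-1)$-multiplicity, which is classical Gelfand--Tsetlin theory, and the $\Spin(2)$-character on each multiplicity space, where the real content lies. Write $\Spin(2n-1)\times\Spin(2)\subset\Spin(2n+1)$ as the spin cover of the block subgroup $\SO(2n-1)\times\SO(2)\subset\SO(2n+1)$, with $\SO(2n-1)$ acting on the first $2n-1$ coordinates and $\SO(2)$ rotating the last two; this is a maximal-rank subgroup (the compact form of the Levi $\mathrm{GL}(1)\times\SO(2n-1)$ of the parabolic stabilizing an isotropic line), and in particular $\SO(2)$ commutes with $\SO(2n-1)$ and hence acts on $\Hom_{\Spin(2n-1)}((\mu),(\lambda))$. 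First I would handle the $\Spin(2n-1)$-side by restricting one coordinate at a time along $\Spin(2n+1)\supset\Spin(2n)\supset\Spin(2n-1)$: each step is the interlacing rule with multiplicity one, so the composite multiplicity of $(\mu)$ in $(\lambda)$ is the number of admissible intermediate $\SO(2n)$-rows, and an elementary count shows this set is governed exactly by the interlacing pattern of the merged, sorted sequence $z_1\geq z_2\geq\cdots\geq z_{2n-1}$. This both yields the $2$-step interlacing hypothesis together with the congruence $x_i\equiv y_j\pmod{\bbZ}$ and provides a Gelfand--Tsetlin basis of the multiplicity space.

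The core of the argument is the $\Spin(2)$-character of that multiplicity space. The subtlety is that $\Spin(2)$ moves the coordinate $e_{2n}$, so it does not commute with the intermediate $\Spin(2n)$ and the Gelfand--Tsetlin basis above is \emph{not} a $\Spin(2)$-eigenbasis; one cannot simply read the $\Spin(2)$-weight off the patterns. Instead I would proceed by induction on $n$, peeling off one pair of coordinates at a time: restrict $\Spin(2n+1)\supset\Spin(2n-1)\times\Spin(2)$ and then $\Spin(2n-1)\supset\Spin(2n-3)\times\Spin(2)$ via the inductive hypothesis, compare with the direct restriction $\Spin(2n+1)\supset\Spin(2n-3)\times\Spin(2)\times\Spin(2)$, and track how the sorted sequence $z_\bullet$ changes when $(\lambda,\mu)$ drops in rank; matching the two forces the tensor-product shape. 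The base of the induction is $\Spin(5)\supset\Spin(3)\times\Spin(2)$, equivalently (at the bottom of the chain) $\SU(2)\downarrow\rU(1)$: here one checks directly that each interior step of the chain contributes a $\Spin(2)$-weight that changes only in even steps (the $B$-factors), while the final $\Spin(3)\supset\Spin(2)$ step contributes a full interval of weights (the single $A$-factor), and one pins down the precise normalization — this is where the half-integer convention for $\Spin(2)$ and the $A$-versus-$B$ distinction are decided. It is tempting to try to read the whole formula off the Wallach--Yacobi rule (Proposition~\ref{P:C_n branching}) by the evident parallelism of the two Gelfand--Tsetlin combinatorics, but the parallelism is only formal: the odd-orthogonal answer is genuinely not the $\rU(1)$-restriction of the symplectic one (the last factor changes), so the orthogonal case must be established on its own.

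Finally I would check consistency to remove any residual ambiguity: summing $\dim A(z_{2n-1})\cdot\prod_j\dim B(z_{2j-1}-z_{2j})$ must reproduce the Gelfand--Tsetlin multiplicity from the first step (a lattice-point count in the interlacing polytope), and the resulting list of $\Spin(2)$-weights, together with the weights of $(\mu)$, must match the known weight multiplicities of $(\lambda)$ under the maximal torus of $\Spin(2n+1)$. I expect the main obstacle to be precisely the $\Spin(2)$-bookkeeping in the inductive step: the $\Spin(2n-1)$-part is a routine iteration of one-coordinate branching, but obtaining the $\Spin(2)$-isotypic decomposition requires either producing explicit highest- and lowest-weight vectors for the $\Spin(2)$-action inside each multiplicity space or carrying the recursion above carefully through the half-integer shifts introduced by the spin double cover — and it is there that the exact form of the answer, down to the distinction between the $A$-factor and the $B$-factors, is determined.
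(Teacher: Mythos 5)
The paper does not prove this proposition; it is stated with a citation to Loke's thesis [L1], so there is no ``paper's own proof'' to compare against. What follows is therefore an assessment of your proposal on its own terms.

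Your opening moves are correct and well-targeted: the multiplicity $\dim\Hom_{\Spin(2n-1)}((\mu),(\lambda))$ is indeed governed by the one-row Gelfand--Tsetlin count coming from $\Spin(2n+1)\supset\Spin(2n)\supset\Spin(2n-1)$, and you put your finger on exactly the right subtlety: the circle $\Spin(2)$ centralizing $\Spin(2n-1)$ is \emph{not} one of the torus circles adapted to that Gelfand--Tsetlin chain (in $\SO(5)\supset\SO(3)\times\SO(2)$, say, the centralizing $\SO(2)$ sits in coordinates $\{4,5\}$ while the relevant intermediate torus circle of $\SO(4)$ sits in $\{3,4\}$), so the Gelfand--Tsetlin basis of the multiplicity space is not a $\Spin(2)$-eigenbasis and the $\Spin(2)$-weights cannot be read off the patterns. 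That observation is correct and is the reason this result is not a triviality. You are also right that one cannot simply transport the Wallach--Yacobi answer by formal analogy; the last factor genuinely differs.

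The gap is in the inductive step, and you in fact flag it yourself. You propose to compare the composite restriction $\Spin(2n+1)\supset\Spin(2n-1)\times\Spin(2)\supset\Spin(2n-3)\times\Spin(2)^{2}$ with ``the direct restriction $\Spin(2n+1)\supset\Spin(2n-3)\times\Spin(2)\times\Spin(2)$.'' But that ``direct'' restriction is not available independently: the only way to compute the $\Spin(2)^{2}$-character of $\Hom_{\Spin(2n-3)}((\nu),(\lambda))$ is through one of the two-step chains you are trying to establish, so the comparison is circular. Concretely, writing
\[
\Hom_{\Spin(2n-3)}((\nu),(\lambda))\;\cong\;\bigoplus_{\mu}\Hom_{\Spin(2n-3)}((\nu),(\mu))\otimes\Hom_{\Spin(2n-1)}((\mu),(\lambda))
\]
as $\Spin(2)\times\Spin(2)$-modules, the left side is unknown, the first tensor factor on the right is the inductive hypothesis, and the second is precisely the object being sought; there is no independent handle. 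The dimension count and the final consistency check against weight multiplicities are genuine constraints, but they are not enough to pin down the $\Spin(2)$-isotypic decomposition uniquely in general. To close the argument you would need an additional input that does produce a $\Spin(2)$-adapted spanning set inside each multiplicity space (for instance, explicit $\Spin(2)$-lowest-weight vectors, a Brion/Weyl-character-type formula for the maximal-rank reductive subgroup $\Spin(2n-1)\times\Spin(2)$, or a see-saw with the oscillator representation relating this branching to the symplectic one where $\rSp(1)$ is adapted to its Gelfand--Tsetlin chain). As written, the proposal is a correct diagnosis of the difficulty and a sensible outline, but it does not yet contain a proof.
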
 

\subsection{Branching rule from $\Spin(2n)$ to $\Spin(2n-2) \times \Spin(2)$} 
This is similar to the previous case, with the usual annoyances associated to $D_n$-type groups. Let $\lambda\in \Lambda(\Spin(2n))$. 
 Then  $\lambda=(x_1, \ldots , x_n)$ where $x_1, \ldots , x_n$ is a descending sequence of half 
 integers, $x_i \equiv x_j \pmod{\mathbb Z}$, and $x_{n-1} \geq |x_n|$.  
 Let $(\lambda)$ denote the finite dimensional representation of  $\Spin(2n)$ with that highest weight. The group $\Spin(2n)$ has an outer automorphism 
 that changes the sign of $x_n$. Thus,  without loss of generality, we assume that 
 $x_n\geq 0$. Let $\mu=(y_1, \ldots , y_{n-1}) \in \Lambda\Spin(2n-2))$. Let $|\mu|=(y_1, \ldots, |y_{n-1}|)$.  
 Then $(\mu)$ appears in the restriction of $(\lambda)$ if and only if 
$x_i\equiv  y_j\pmod{\mathbb Z}$ and  $|\mu|$ 2-step interlaces $\lambda$. 
Let 
\[ 
z_1 \geq z_2 \geq \ldots \geq z_{2n-1}
\] 
be the ordering of $x_i$ and $|y_j|$.

\begin{prop} \label{P:D_n branching} 
Let $\lambda \in \Lambda(\Spin(2n))$  and $\mu\in \Lambda(\Spin(2n-2))$.  Assume that $x_n\geq 0$, 
 $x_i \equiv y_j \pmod{\mathbb Z}$  and $|\mu|$ 2-step interlaces $\lambda$. Then, as $\Spin(2)$-modules, 
\[ 
\Hom_{\Spin(2n-2)} ( (\mu), (\lambda)) \cong B(z_1-z_2) \otimes B(z_3-z_4)\otimes  \cdots \otimes A(z_{2n-1}). 
\] 
\end{prop}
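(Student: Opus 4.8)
The plan is to follow the proof of Proposition \ref{P:B_n branching} (the $\Spin(2n+1)$ case), to which this statement is formally parallel, just as that one is parallel to the $\rSp(n)$ case of Proposition \ref{P:C_n branching}. First I would recover the underlying $\Spin(2n-2)$-content by factoring the restriction through the chain $\SO(2n)\supset\SO(2n-1)\supset\SO(2n-2)$. Both one-step restrictions are the classical, multiplicity-free branchings governed by (one-step) interlacing, so the multiplicity of $(\mu)$ in the restriction of $(\lambda)$ to $\Spin(2n-2)$ equals the number of $\Spin(2n-1)$-highest weights $\nu$ interlacing both $\lambda$ and $\mu$. Chasing the two interlacing chains shows this count is nonzero exactly when $|\mu|$ $2$-step interlaces $\lambda$, and the Gelfand--Tsetlin-style count of admissible $\nu$ telescopes to the dimension $\prod_{i<n}(z_{2i-1}-z_{2i}+1)\cdot(2z_{2n-1}+1)$, which is exactly $\dim\bigl(B(z_1-z_2)\otimes\cdots\otimes A(z_{2n-1})\bigr)$ after reindexing by the merged sequence $z_1\ge\cdots\ge z_{2n-1}$ of the $x_i$ and the $|y_j|$.

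It then remains to pin down the action of the commuting $\Spin(2)$ on each multiplicity space $M_\mu=\Hom_{\Spin(2n-2)}((\mu),(\lambda))$, where the circle $\SO(2)$ rotates the last two coordinates and hence commutes with $\SO(2n-2)$. The key structural point is that $\SO(2n-2)\times\SO(2)$ is an \emph{equal-rank} subgroup of $\SO(2n)$ (ranks $(n-1)+1=n$), so it shares the maximal torus $T_{2n-2}\times T'$ with $\SO(2n)$. Writing $\mathrm{ch}\,(\lambda)$ on this torus and using $\mathrm{ch}\,(\lambda)=\sum_\mu \mathrm{ch}_{\Spin(2n-2)}(\mu)\cdot\chi_{M_\mu}(t')$, one solves for $\chi_{M_\mu}$ via the Weyl character formula for $D_n$ together with the factorization of the Weyl denominator $\delta_{D_n}=\delta_{D_{n-1}}\cdot P_n$, where $P_n=\prod_{i<n}(e^{(\epsilon_i-\epsilon_n)/2}-e^{-(\epsilon_i-\epsilon_n)/2})(e^{(\epsilon_i+\epsilon_n)/2}-e^{-(\epsilon_i+\epsilon_n)/2})$ collects the roots involving $\epsilon_n$. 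This turns the computation into a finite identity of Laurent polynomials in $e^{\epsilon_n}$, completely analogous to the ones underlying Propositions \ref{P:C_n branching} and \ref{P:B_n branching}: expanding the alternating sum $A^{D_n}_{\lambda+\rho}$ according to the image of $\epsilon_n$ under the Weyl group and collecting terms produces the claimed tensor product of the $\Spin(2)$-modules $B(\cdot)$ and $A(\cdot)$.

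A viable alternative, if one prefers to avoid the direct character computation, is to deduce the result from the already established $\Spin(2n+1)$ case. Consider the commuting square of restrictions $\SO(2n+1)\supset\SO(2n)\supset\SO(2n-2)\times\SO(2)$ and $\SO(2n+1)\supset\SO(2n-1)\times\SO(2)\supset\SO(2n-2)\times\SO(2)$. Along the second chain everything is known: Proposition \ref{P:B_n branching} gives $\Spin(2n+1)\downarrow\Spin(2n-1)\times\Spin(2)$, and the classical $B_{n-1}\to D_{n-1}$ branching completes the descent. Along the first chain, the classical $B_n\to D_n$ branching reduces the problem to the desired $\Spin(2n)\downarrow\Spin(2n-2)\times\Spin(2)$ rule applied to the $D_n$-constituents of $(\lambda)|_{\Spin(2n)}$; since varying the $B_n$-weight $\lambda$ (keeping its last coordinate suitably fixed) isolates each individual $D_n$-constituent, equating the two computations determines the $D_n$-rule uniquely.

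The main obstacle in either route is the $D_n$-specific bookkeeping flagged before the statement: the outer automorphism and the consequent sign ambiguity of the last coordinate $x_n$ (handled by normalizing $x_n\ge0$ and passing to $|\mu|$), the splitting of the two half-spin constituents, and the care needed in organizing the Weyl-group sum for $D_n$. Once this is organized, the combinatorics is routine, being identical in shape to the $C_n$ computation of Proposition \ref{P:C_n branching}.
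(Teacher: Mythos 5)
Your first step already fails, and the failure is quantitative. In the chain $\Spin(2n)\supset\Spin(2n-1)\supset\Spin(2n-2)$ the intermediate highest weight $\nu=(\nu_1,\dots,\nu_{n-1})$ has only $n-1$ coordinates, each confined to a single interval $[\max(x_{i+1},|y_i|),\min(x_i,y_{i-1})]$, so the number of admissible $\nu$ is $\prod_{i=1}^{n-1}(z_{2i-1}-z_{2i}+1)$: there is no extra factor $2z_{2n-1}+1$. That factor does occur in the $B_n$ case of Proposition \ref{P:B_n branching}, because there the intermediate $\Spin(2n)$-weight has $n$ coordinates and its last coordinate $\nu_n$ ranges freely over $[-z_{2n-1},z_{2n-1}]$; this has no analogue in the present chain, and your ``telescoping'' silently imports it. A concrete check: for $\Spin(6)\cong\SU(4)$ take $\lambda=(1,1,1)$ (the $10$-dimensional module $S^2\bbC^4$) and $\mu=(1,1)$. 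Listing weights gives
\[
(\lambda)\big|_{\Spin(4)\times\Spin(2)}=(1,1)\boxtimes(1)\ \oplus\ (1,-1)\boxtimes(-1)\ \oplus\ (1,0)\boxtimes(0),
\]
so $\Hom_{\Spin(4)}((\mu),(\lambda))$ is the single character $(1)$, of dimension $1$, whereas your count (and the displayed right-hand side) give $B(0)\otimes B(0)\otimes A(1)$, of dimension $3$.

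The lesson is that neither of your two routes, carried out honestly, can land on the formula you are aiming at: the equal-rank character computation of your second paragraph and the descent from Proposition \ref{P:B_n branching} in your third both produce a \emph{one-dimensional} last factor, a single $\Spin(2)$-character $(\pm z_{2n-1})$ whose sign is governed by $\mathrm{sgn}(y_{n-1})$ (this sign bookkeeping is the genuine ``$D_n$ annoyance''), in place of $A(z_{2n-1})$. That corrected form is also what is actually used later in the paper: in the proof of Theorem \ref{TthetaSpin9w} the conclusion drawn from this proposition is that ``$\Spin(2)$ acts by the weight $e$'', a single character, on a multiplicity space whose dimension is $c-d+1$, not $(c-d+1)(2|e|+1)$. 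So rather than adjusting the interlacing count to match the printed right-hand side, you should redo the count correctly, replace the final tensor factor accordingly, and only then pin down the $\Spin(2)$-action (including the sign) by either of your proposed methods.
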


\subsection{A branching rule from $\rF_4$ to $\Spin(9)$}
We consider the following see-saw pair in~$\rE_{8,4}$.  
\begin{equation} \label{eqseesaw3}
\arraycolsep=1.5pt\def\arraystretch{1.35}
\begin{array}{ccc}
\rF_4 & & \Spin(4,3)\\
| & \bigtimes & | \\
\Spin(9) & & \rG_{2,2}
\end{array}
\end{equation}
where $\rF_4$ denote the compact Lie group of type $\rF_4$.
For $a, b$ non-negative integers where $a \geq b$, we set
\[
\omega(a,b) = (a-b) \varpi_4 + b \varpi_3 = \left(a + \frac{b}{2}, \frac{b}{2}, \frac{b}{2}, \frac{b}{2} \right) \ \in \ \Lambda(\rF_4).
\]
 Let $(\omega(a,b))$ denote the irreducible representation of $\Spin(9)$ with the highest weight $\omega(a,b)$. 

\begin{prop}
Let $w = (w_1,w_2,w_3,w_4) \in \Lambda(\Spin(9))$. Let $(w)$ denote the irreducible representation of $\Spin(9)$ with the highest weight $w$.  
\begin{enumerate}
\item If $w_1 + w_2 > a+b$, then $(\omega(a,b))$ does not contain $(w)$.

\item If $w_1 + w_2 \leq a+b$, then the multiplicity of  $(w)$ in $(\omega(a,b))$ is equal to 
\[
\dim \Hom_{\SU_2}((a-b), (a+b-w_1-w_2)\otimes (w_1-w_2) \otimes (2w_4))
\]
where $(n)$ denotes the irreducible representation of $\SU(2)$ with the highest weight $n$. 
\end{enumerate}
\end{prop}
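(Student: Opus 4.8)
The plan is to run the argument through the see-saw pair (\ref{eqseesaw3}) inside $\rE_{8,4}$, reading off the branching from $\rF_4$ to $\Spin(9)$ from the two dual-pair correspondences sitting there. Write $\Vmin$ for the minimal representation of $\rE_{8,4}$. Restricting to the first-family pair $\rF_4\times\rG_{2,2}$ gives $\Vmin=\bigoplus_{\pi}\pi\otimes\Theta_{\rG_2}(\pi)$, where $\pi$ runs over the finite-dimensional representations of the compact $\rF_4$ that occur and, by \cite{HPS}, each $\Theta_{\rG_2}(\pi)$ is irreducible with the $\Theta_{\rG_2}(\pi)$ pairwise non-isomorphic; restricting instead to $\Spin(9)\times\Spin(4,3)$ gives $\Vmin=\bigoplus_w (w)\otimes\Theta(w)$ with $\Theta(w)$ as in Theorem \ref{TthetaSpin9w}. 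Comparing the two descriptions of $\Vmin|_{\Spin(9)\times\rG_{2,2}}$ and isolating the $(w)$-isotypic part (legitimate because $\Spin(9)$ is compact) yields the see-saw identity
\[
\Theta(w)\big|_{\rG_{2,2}}\;\cong\;\bigoplus_{\pi}\dim\Hom_{\Spin(9)}\!\big((w),\pi|_{\Spin(9)}\big)\cdot\Theta_{\rG_2}(\pi).
\]
Thus $\dim\Hom_{\Spin(9)}\!\big((w),(\omega(a,b))\big)$ equals the multiplicity with which $\Theta_{\rG_2}\!\big((\omega(a,b))\big)$ occurs in $\Theta(w)|_{\rG_{2,2}}$; note in passing that the right-hand side depends only on $w_1,w_2,w_4$, which already forces the $w_3$-independence visible in the statement.

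Next I would pin down both representations. By Theorem \ref{TthetaSpin9w}, $\Theta(w)=\bfA\!\big(\Spin(4,3),(w_1-w_2,\,2w_4)[10+w_1+w_2]\big)$, whose $\SU_0(2)\times\SU(2)\times\Spin(3)$-types are, by (\ref{E:K_types}), $\bigoplus_{k\ge0}(s+k-2)\otimes S^k(V_M)\otimes W_M$ with $s=10+w_1+w_2$, $V_M=(2)\otimes(1)$ and $W_M=(w_1-w_2)\otimes(2w_4)$. On the other side, \cite{HPS} identifies $\Theta_{\rG_2}\!\big((\omega(a,b))\big)$ with a quaternionic representation $\sigma(\rG_{2,2},(a-b)[t])$, the parameter $t=t(a,b)$ being fixed by the matching of infinitesimal characters; by Theorem \ref{T:generation} and its corollary this module is generated over $U(\mathfrak g_2)$ by its minimal $K_2$-type $F_0$, so any nonzero $\rG_{2,2}$-map out of it is injective and determined by the image of $F_0$. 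Hence the multiplicity we want is at most $\dim\Hom_{K_2}\!\big(F_0,\Theta(w)|_{K_2}\big)$, which I would evaluate from the $K$-type list above using the explicit embedding $K_2=\SU_s(2)\times\SU_l(2)\hookrightarrow\SU_0(2)\times\Spin(3)\times\SU(2)$ recorded in Section \ref{S:B3G2} — $\SU_s(2)$ diagonal in the first two factors, $\SU_l(2)$ the last one. Restricting each summand $(s+k-2)\otimes S^k(V_M)\otimes W_M$ to $K_2$ via the Cauchy decomposition of $S^k\!\big((2)\otimes(1)\big)$ and the Clebsch--Gordan rule, and projecting to the $F_0$-isotypic piece, the dependence on $k$ and on the plethysm should organize into a single threefold $\SU(2)$ tensor product, giving $\dim\Hom_{K_2}(F_0,\Theta(w)|_{K_2})=\dim\Hom_{\SU_2}\!\big((a-b),(a+b-w_1-w_2)\otimes(w_1-w_2)\otimes(2w_4)\big)$, which in particular is empty when $a+b<w_1+w_2$; this last vanishing is anyway clear directly, since $\Spin(9)$ and $\rF_4$ share a maximal torus, so every $\Spin(9)$-constituent $(w)$ of $(\omega(a,b))$ has $w\preceq\omega(a,b)$ in $\rB_4$-dominance and hence $w_1+w_2\le a+b$ (this is part (1)).

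It remains to promote ``at most'' to equality. Here I would use that, by the first-family correspondence, the $\Theta_{\rG_2}(\pi)$ carry pairwise distinct infinitesimal characters corresponding to those of the $\pi$: passing to the generalized $Z(\mathfrak g_2)$-eigenspace of $\Theta(w)|_{\rG_{2,2}}$ for the infinitesimal character attached to $(\omega(a,b))$ extracts precisely the $\Theta_{\rG_2}((\omega(a,b)))$-isotypic summand, inside which the minimal $K_2$-type $F_0$ occurs with multiplicity one per copy; thus the count above, restricted to that eigenspace, is exactly the multiplicity, and the value computed in the previous paragraph is the answer.

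The genuinely technical point, and the one I expect to be the main obstacle, is the $K_2$-type computation: controlling the plethysm $S^k\!\big((2)\otimes(1)\big)$ under the \emph{diagonal} copy $\SU_s(2)$ and checking that summing over $k$ — and over the internal Littlewood--Richardson multiplicities produced by the restriction $\Spin(3)\times\SU(2)\to K_2$ — really telescopes to the single coefficient $\dim\Hom_{\SU_2}\!\big((a-b),(a+b-w_1-w_2)\otimes(w_1-w_2)\otimes(2w_4)\big)$. The infinitesimal-character bookkeeping used for the equality is more routine, but still needs care to confirm that no other constituent $\Theta_{\rG_2}(\pi)$ of $\Theta(w)|_{\rG_{2,2}}$ inflates the $F_0$-multiplicity before one passes to the correct central eigenspace.
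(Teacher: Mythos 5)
Your skeleton is the paper's: invoke the see-saw for the pair \eqref{eqseesaw3}, identify $\Theta(w)$ via Theorem \ref{TthetaSpin9w} and $\Theta(\omega(a,b))$ via \cite{HPS}, and reduce to a $\rG_{2,2}$-multiplicity. Your quick dominance argument for part~(1) is also fine and is a reasonable alternative to the paper's ``$m\geq 0$'' observation. The problem is the step you yourself flag as the main obstacle, and it is not merely technical: the quantity you propose to compute, $\dim\Hom_{K_2}\bigl(F_0,\Theta(w)|_{K_2}\bigr)$, does \emph{not} equal $\dim\Hom_{\SU_2}\bigl((a-b),(a+b-w_1-w_2)\otimes(w_1-w_2)\otimes(2w_4)\bigr)$. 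Concretely, take $w=(1,0,0,0)$ and $(a,b)=(3,0)$. Then the right-hand side is $1$, but $\Theta(w)|_{\rG_{2,2}}$ decomposes (by \eqref{sum_g_2}, Clebsch--Gordan on $(m)\otimes(1)$) as $\bfA(\rG_{2,2},(1)[11])\oplus\bfA(\rG_{2,2},(2)[12])\oplus\bfA(\rG_{2,2},(0)[12])\oplus\bfA(\rG_{2,2},(3)[13])\oplus\bfA(\rG_{2,2},(1)[13])\oplus\cdots$, and the $K_2$-type $F_0$ occurs once in each of the first four of these, for a total of $4$. The extra copies of $F_0$ live in constituents with lower parameter $s$ — i.e.\ outside the $Z(\mathfrak g_2)$-eigenspace you need — as non-minimal $K_2$-types.

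Your proposed fix, passing to the generalized $Z(\mathfrak g_2)$-eigenspace, is sound in principle, but you cannot compute the $F_0$-multiplicity \emph{inside} that eigenspace from the raw $K$-type list of $\Theta(w)$; you would first need to know how $\Theta(w)|_{\rG_{2,2}}$ splits into $\bfA(\rG_{2,2},\cdot)$'s. That is precisely what the paper's proof supplies: it applies the restriction filtration of Section \ref{SS:restricting_A} (which splits by unitarity) to pass from \eqref{theta_b_3} to \eqref{sum_g_2}, and then the ``$s$-parameter'' $10+w_1+w_2+m$ matches the target $a+b+10$ for at most one $m$, namely $m=a+b-w_1-w_2$. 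At that point the multiplicity is simply $\dim\Hom_{\SU_2}\bigl((a-b),(m)\otimes(w_1-w_2)\otimes(2w_4)\bigr)$ read off the minimal $K_2$-type of that single summand, and no plethysm of $S^k\bigl((2)\otimes(1)\bigr)$ is needed. In short: the paper uses the $\bfA$-decomposition to avoid the plethysm entirely; your plan keeps the decomposition implicit and reintroduces the plethysm, which then gives the wrong number.
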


\begin{proof}
Applying the see-saw pair argument to \eqref{eqseesaw3}, we get
\begin{equation} \label{see_saw_f4} 
 \Hom_{\Spin(9)}((\omega(a,b)), (w)) = 
 \Hom_{\rG_{2,2}}(\Theta((w)), \Theta(\omega(a,b)).  
\end{equation} 
By \cite{HPS},
\[
\Theta(\omega(a,b)) = \bfA(\rG_{2,2}, (a-b)[a+b+10]). 
\]
By \Cref{TthetaSpin9w}
\begin{equation} \label{theta_b_3}  
\Theta((w)) =  \bfA(\Spin(4,3), (w_1-w_2, w_4)[10 + w_1 + w_2]). 
\end{equation}
Moreover, using the filtration in Section \ref{SS:restricting_A}) (which splits because of unitarizability)   (\ref{theta_b_3}) can be rewritten as 
\begin{equation} \label{sum_g_2} 
\Theta((w)) =
\bigoplus_{m=0}^\infty \bfA(\rG_{2,2}, (m) \otimes  (w_1-w_2) \otimes (2w_4)[10 + w_1 + w_2 + m]).
\end{equation} 
Now observe that at most one term in the above sum contributes non-trivially to the right hand side of (\ref{see_saw_f4}), the one such that 
$10 + w_1 + w_2 + m = a + b + 10$, i.e. $m = a + b - w_1 - w_2$. Since $m\geq 0$, the first bullet holds and then, evidently, 
\[ 
 \Hom_{\rG_{2,2}}(\Theta((w)), \Theta(\omega(a,b)) = \Hom_{\SU(2)}((a+b-w_1-w_2)\otimes (w_1-w_2) \otimes (2w_4), (a-b) ). 
\] 
 \end{proof}
The above branching rule generalizes the one of Lepowsky \cite[Section 4]{Lep}, where the case $b=0$ is covered.

\end{document}